\setlist{topsep=0pt,leftmargin=*}
\definecolor{darkblue}{RGB}{0,0,160}
\definecolor{ffqqqq}{rgb}{1,0,0}
\definecolor{qqqqff}{rgb}{0,0,1}
\definecolor{ffxfqq}{rgb}{1,0.5,0}
\newcommand*\bigcdot{\mathpalette\bigcdot@{.5}}
\newcommand*\bigcdot@[2]{\mathbin{\vcenter{\hbox{\scalebox{#2}{$\m@th#1\bullet$}}}}}
\def\eps{\varepsilon}
\def\d{{\mathrm d}}
\def\dist{{\mathrm {dist}}}
\def\supp{{\mathrm{supp}\, }}
\def\R{\mathbb{R}}
\def\N{\mathbb{N}}
\def\M {{\mathrm M}}
\def\Z {{\mathbb Z}}
\def\1 {{\mbox{\boldmath 1}}}
\def \l {\langle}
\def \r {\rangle}
\def\ind{\cic{1}}
\newcommand{\cic}{\bm}
\DeclareFontFamily{U}{mathx}{}
\DeclareFontShape{U}{mathx}{m}{n}{<-> mathx10}{}
\DeclareSymbolFont{mathx}{U}{mathx}{m}{n}
\DeclareMathAccent{\widecheck}{0}{mathx}{"71}
   \def\XXint#1#2#3{{\setbox0=\hbox{$#1{#2#3}{\int}$}
       \vcenter{\hbox{$#2#3$}}\kern-.5\wd0}}
\def \no#1#2#3 {{\bf #1} (#3), #2.}
\def \eds#1#2#3 {#1, #2, #3.}
\newcounter{counter}
\numberwithin{equation}{section}
\newtheorem{theorem}[counter]{Theorem}
\newtheorem*{theorem*}{Theorem}
\newtheorem{corollary}{Corollary}
\newtheorem{cor}[subsection]{Corollary}
\newtheorem{lemma}{Lemma} 
\newtheorem{proposition}[subsection]{Proposition}
\theoremstyle{definition}
\newtheorem{definition}{Definition}
\newtheorem*{remark*}{Remark}
\newtheorem{remark}[subsection]{Remark} 
\theoremstyle{plain}
\numberwithin{corollary}{counter}
\numberwithin{lemma}{section}
\newcommand\myurl[1]{\url{#1}}
\author[O. Bakas]{Odysseas Bakas}
\address[O. Bakas]{Department of Mathematics, University of Patras, 26504 Patras, Greece}
\email{\href{mailto:obakas@upatras.gr}{\textnormal{obakas@upatras.gr}}}
\author[V. Ciccone]{Valentina Ciccone}
\address[V. Ciccone]{Hausdorff Center for Mathematics, Universit\"at Bonn, Endenicher Allee 60, 53115 Bonn, Germany }
\email{\href{mailto:ciccone@math.uni-bonn.de}{\textnormal{ciccone@math.uni-bonn.de}}}
\author[I. Parissis]{Ioannis Parissis}
\address[I.\ Parissis]{Departamento de Matem\'aticas, Universidad del Pa\'is Vasco, Aptdo. 644, 48080 Bilbao, Spain and Ikerbasque, Basque Foundation for Science, Bilbao, Spain}
\email{\href{mailto:ioannis.parissis@ehu.es}{\textnormal{ioannis.parissis@ehu.es}}}
\author[M. Vitturi]{Marco Vitturi}
\address[M. Vitturi]{Munster Technological University, Department of Mathematics, Bishopstown, Cork, Ireland}
\email{\href{mailto:marco.vitturi@ucc.ie}{\textnormal{marco.vitturi@ucc.ie}}}
\numberwithin{figure}{section}
\begin{document}

\thanks{O. Bakas was partially supported by the projects CEX2021-001142-S, RYC2018-025477-I, grant PID2021-122156NB-I00 funded by MICIU/AEI/10.13039/501100011033 and FEDER, UE, Juan de la Cierva Incorporaci\'on IJC2020-043082-I, grant BERC 2022-2025 of the Basque Government, and by the funding programme ``MEDICUS'' of the University of Patras.}

\thanks{This project was partially carried out during V. Ciccone's research visits to BCAM - Basque Center for Applied Mathematics and to the University of the Basque Country UPV/EHU. Her visits have been partially supported by the Hausdorff Center for Mathematics in Bonn through the Global Math Exchange Program and BIGS, and by the grant PID2021-122156NB-I00 funded by MICIU/AEI/10.13039/501100011033 and FEDER, UE, and grant IT1615-22 of the Basque Government.}

\thanks{I. Parissis is partially supported by grant PID2021-122156NB-I00 funded by MICIU/AEI/10.13039/501100011033 and FEDER, UE, grant IT1615-22 of the Basque Government and IKERBASQUE}

\subjclass[2010]{Primary: 42A45, 42A55, 42B25. Secondary: 42B35}
\keywords{Littlewood-Paley square functions, Marcinkiewicz multipliers, H\"ormander-Mihlin multipliers, endpoints bounds, lacunary sets of finite order}

\title[Higher order Marcinkiewicz multipliers]{Endpoint estimates for higher order Marcinkiewicz multipliers}

\begin{abstract}
We consider Marcinkiewicz multipliers of any lacunary order defined by means of uniformly bounded variation on each lacunary Littlewood--Paley interval of some fixed order $\tau\geq 1$. We prove the optimal endpoint bounds for such multipliers as a corollary of a more general endpoint estimate for a class of multipliers introduced by Coifman, Rubio de Francia and Semmes and further studied by Tao and Wright. Our methods also yield the best possible endpoint mapping property for higher order H\"ormander-Mihlin multipliers, namely multipliers which are singular on every point of a lacunary set of order $\tau$. These results can be considered as endpoint versions of corresponding results of Sj\"ogren and Sj\"olin. Finally our methods generalize a weak square function characterization of the space $L\log^{1/2}L$ in terms of a square function introduced by Tao and Wright: we realize such a weak characterization as the dual of the Chang--Wilson--Wolff inequality, thus giving corresponding weak square function characterizations for the spaces $L\log^{\tau/2}L$ for general integer orders $\tau\geq 1$.
\end{abstract}

\maketitle


\section{Introduction} Our topic is endpoint estimates for Marcinkiewicz-type multipliers on the real line. We recall that a Marcinkiewicz multiplier is a bounded function $m:\R\to \mathbb C$ which has bounded variation on each Littlewood--Paley interval $L_k\coloneqq (-2^{k+1},-2^k]\cup[2^k,2^{k+1})$, uniformly in $k\in\Z$. It is well known that the operator $\mathrm{T}_m f\coloneqq (m \hat f)^\vee$ is bounded on $L^p(\R)$ for all $p\in (1,\infty)$. Endpoint estimates for Marcinkiewicz multipliers were proved by Tao and Wright in \cite{TW} where the authors prove that they locally map $L\log^{1/2}L$ into weak $L^1$. 

A prototypical Marcinkiewicz multiplier is given by the signed sum
\[
\sum_{k\in \Z} \eps_k \ind_{L_k},\qquad \eps_k\in\{-1,+1\},
\]
while an orthogonality argument provides the link between Marcinkiewicz multipliers and the classical Littlewood--Paley square function
\[
\mathrm{LP}_1f(x)\coloneqq \left(\sum_{k\in\Z} |\mathrm P_kf|^2\right)^{\frac12}=\left(\mathbf E\left|\sum_{k\in\Z}\eps_k \mathrm{P}_k f\right|^2\right)^{\frac 12},\qquad \mathrm{P}_k f\coloneqq (\ind_{L_k}\hat f)^\vee,
\]
the expectation being over choices of independent random signs.

In the present paper we are interested in higher order versions of Marcinkiewicz multipliers. In order to motivate such a study it is very natural to consider square functions that project to Littlewood--Paley intervals given by lacunary sets of order $2$ or higher. For example letting 
\[
L_{(k,m)}\coloneqq \left \{\xi\in\R:\, |\xi|\in (2^k+2^{m-1},2^{k}+2^{m}]\cup[2^{k+1}-2^m,2^{k+1}-2^{m-1})\right\},\qquad k>m,
\]
denote the family of Littlewood--Paley intervals of second order, we naturally define
\[
\mathrm {LP}_2f \coloneqq \left(\sum_{\substack{(k,m)\in\Z^2\\k>m}} |\mathrm P_{(k,m)}f|^2\right)^{\frac12}=\left(\mathbf E\left|\sum_{\substack{(k,m)\in\Z^2\\k>m}}\eps_{k,m} \mathrm P_{(k,m)} f\right|^2\right)^{\frac 12},\quad \mathrm P_{(k,m)} f\coloneqq (\ind_{L_{(k,m)}}\hat f)^\vee,
\]
initially for Schwartz functions with compactly supported Fourier transform. This is a second order Littlewood--Paley square function while the multiplier 
\[
\sum_{\substack{(k,m)\in\Z^2\\k>m}} \eps_{(k,m)} \ind_{L_{(k,m)}},\qquad \eps_{(k,m)}\in\{-1,+1\},
\]
can be considered as a prototypical Marcinkiewicz multiplier of order $2$. A Littlewood--Paley partition $\{L: \, L\in\Lambda_\tau\}$ of lacunary order $\tau>1$ is naturally produced by iterating Whitney decompositions inside each Littlewood--Paley interval of order $\tau-1$. Accordingly, a Marcinkiewicz multiplier of order $\tau$ is a bounded function which has bounded variation uniformly on all Littlewood--Paley intervals of order $\tau$. Likewise, the Littlewood--Paley square function of order $\tau$ is
\[
\mathrm {LP}_\tau f\coloneqq \left(\sum_{L\in\Lambda_\tau } |\mathrm P_{L}f|^2\right)^{1/2}=\left(\mathbf E\left|\sum_{L\in\Lambda_\tau}\eps_{L}\mathrm P_{L}f\right|^2\right)^{1/2},\qquad \mathrm P_{L}f \coloneqq \left(\ind_{L}\hat f\right)^\vee.
\]
With precise definitions to follow, a punchline result of this paper is the following.

\begin{theorem}\label{thm:Marcink}\label{cor:LPsf}
 If $m$ is a Marcinkiewicz multiplier of order $\tau\in\N$ then $\mathrm{T}_m$ satisfies the estimate
\[
\left| \left\{ x\in\mathbb{R}: \, |\mathrm{T}_mf(x)|> \alpha \right\}\right|\lesssim \int_\mathbb{R} \frac{|f|}{\alpha} \bigg( \log \bigg(e+\frac{|f|}{\alpha} \bigg) \bigg)^{\tau/2}, \quad \alpha>0.
\]
The same is true for the Littlewood--Paley square function $\mathrm{LP}_\tau$ of order $\tau$. In both cases the endpoint estimates are best possible in the sense that the exponent $\tau/2$  in the right hand side cannot be replaced by any smaller exponent.
 \end{theorem}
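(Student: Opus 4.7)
The plan is to separate the upper bound from the sharpness assertion, and to treat the multiplier and the square function together via their common randomized representation. For the upper bound, I would first observe that every Marcinkiewicz multiplier $m$ of order $\tau$ fits into a Coifman--Rubio de Francia--Semmes-type class: on each $L\in\Lambda_\tau$ the bounded variation of $m|_L$ allows us to approximate $m|_L$ by $\sum_{j}c_{L,j}\mathbf 1_{I_{L,j}}$ with $I_{L,j}\subset L$ subintervals and $\sum_j|c_{L,j}|\lesssim\|m\|_{\mathrm{BV}(L)}$. Thus Theorem~A is reduced to the endpoint $L\log^{\tau/2}L\to L^{1,\infty}$ bound for the Tao--Wright/CRS-type multiplier indicated in the abstract.

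To establish this general bound I would proceed by induction on $\tau$. The case $\tau=1$ is the Tao--Wright theorem, which runs through Chang--Wilson--Wolff. For the inductive step, group the order-$\tau$ pieces by their order-$(\tau-1)$ ancestor: for each $L'\in\Lambda_{\tau-1}$, the partial sum $\sum_{L\subset L'}$ is a one-step Marcinkiewicz-type object acting on $\mathrm P_{L'}f$. Combining the one-scale Tao--Wright endpoint with the inductive hypothesis applied to the order-$(\tau-1)$ square function $(\sum_{L'}|\mathrm P_{L'}f|^2)^{1/2}$, and using CWW duality (which delivers $L\log^{1/2}L\to L^{1,\infty}$ from the Gaussian one-sided square function bound), one gains an additional $(\log)^{1/2}$ at each scale and hence $\log^{\tau/2}$ in total. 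The square function assertion then follows from the multiplier bound by the randomization identity
\[
\mathrm{LP}_\tau f(x)=\Bigl(\mathbf E\Bigl|\sum_{L\in\Lambda_\tau}\eps_L\mathrm P_L f(x)\Bigr|^2\Bigr)^{1/2},
\]
since each realization of $\eps=(\eps_L)$ produces an order-$\tau$ Marcinkiewicz multiplier with uniform constant; integrating the multiplier weak-type bound against the Rademacher measure and invoking the distributional Khintchine-type argument of Tao--Wright transfers the endpoint to $\mathrm{LP}_\tau$.

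Sharpness of the exponent $\tau/2$ would follow from a lacunary test function $f_N$ whose Fourier support consists of roughly $N^\tau$ frequencies distributed across the $\tau$ nested Whitney scales, $N$ at each scale, with phases arranged so that the pieces $\mathrm P_L f_N$ add coherently on a set of measure of order one. A direct computation yields $\|\mathrm{LP}_\tau f_N\|_{L^{1,\infty}}\gtrsim N^{\tau/2}$ while $\|f_N\|_{L\log^s L}\lesssim N^{s}$ on any fixed ball containing the essential support, forcing $s\geq\tau/2$; the same $f_N$ coupled with a suitably signed order-$\tau$ multiplier obstructs any smaller exponent for $\mathrm T_m$.

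The main obstacle is the inductive step for the upper bound: one has to combine the one-scale Tao--Wright/CWW endpoint with control of the iterated square function while keeping the logarithmic losses exactly additive in the lacunary order. The duality of Chang--Wilson--Wolff emphasized in the abstract is precisely the mechanism that turns this coupling into a clean accounting of $(\log)^{1/2}$ factors per level, and encoding the problem in the CRS/Tao--Wright language is what makes the induction close without losing constants at each stage.
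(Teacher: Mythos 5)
Your reduction of Marcinkiewicz multipliers of order $\tau$ to the CRS/Tao--Wright class $R_{2,\tau}$ is exactly what the paper does, and the transfer from the multiplier bound to $\mathrm{LP}_\tau$ by randomization (a choice of signs $\eps_L$ produces a unit-norm $R_{2,\tau}$ multiplier, and Khintchine converts the weak-type bound for random sign sums into one for the square function) is also the paper's argument. The trouble is in your inductive step for the upper bound.

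You propose to prove the weak-type $(L\log^{\tau/2}L, L^{1,\infty})$ bound by induction on $\tau$, decomposing $\mathrm T_m=\sum_{L'\in\Lambda_{\tau-1}}\mathrm T_{m|_{L'}}\mathrm P_{L'}$ and ``combining the one-scale Tao--Wright endpoint with the inductive hypothesis applied to the order-$(\tau-1)$ square function.'' This does not close. A weak-type bound gives output only in $L^{1,\infty}$, which does not embed into $L\log^{1/2}L$, so the inductive hypothesis (a weak-type bound for the order-$(\tau-1)$ object) produces an intermediate quantity on which the one-scale Tao--Wright endpoint cannot act. Weak-type estimates do not iterate this way, and there is no obvious substitute inequality that preserves the exact $(\log)^{1/2}$ accounting per level at the $L^{1,\infty}$ endpoint. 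The paper handles this differently: there is a \emph{single} Calder\'on--Zygmund decomposition of $f$ adapted to the Orlicz space $L^{B_{\tau/2}}(\R)$ (Proposition~\ref{prop:CZ_decomp}), the good part plus a ``lacunary'' correction is controlled in $L^2$, and the bad part is split further so that the crucial pieces are estimated via a \emph{strong-type} Orlicz-to-Orlicz inequality, namely the generalized Zygmund--Bonami inequality
\[
\Big(\sum_{L\in\Lambda_\tau^{|J|^{-1}}}\langle|\Delta_Lf|\rangle_{B_{\sigma/2},\gamma J}^2\Big)^{1/2}\lesssim\langle|f|\rangle_{B_{(\sigma+\tau)/2},J},
\]
which is the object proved by induction on $\tau$. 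The induction on $\tau$ therefore lives inside a strong inequality in Orlicz averages (where the exponent $\sigma$ increases by one when you peel off an order-one Littlewood--Paley projection), not at the level of weak-type endpoints. The Chang--Wilson--Wolff duality enters once, to prove the $\tau=1$ base of this inequality via the weak square function characterization of $L\log^{(\sigma+1)/2}L$; it is not re-invoked scale by scale.

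On sharpness, your proposed test function is in the right spirit but the claim $\|f_N\|_{L\log^s L}\lesssim N^s$ and the coherence estimate are left vague; as written it would need substantial work to make rigorous, and it is not clear the scalings come out right. The paper offers two cleaner routes: (i) an interpolation argument showing that an endpoint bound with exponent $r$ forces $\|\mathrm{LP}_\tau\|_{p\to p}\lesssim (p-1)^{-(r+1)}$ as $p\to1^+$, which contradicts a Bourgain--Bakas-type lower bound unless $r\geq\tau/2$; and (ii) a direct construction with $f_N$ a dilated bump and a collection of half-line-type multipliers $m_{(k,l)}$ localized near the lacunary singular set, computing the $L^{1,\infty}$ mass of the resulting vector-valued operator explicitly. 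Either would serve where your sketch currently falls short.
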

We will deduce Theorem~\ref{thm:Marcink} as a consequence of the more general Theorem~\ref{thm:main} below which applies to the wider class of $R_{2,\tau}$ multipliers. 

\subsection{Lacunary sets of higher order}\label{sec:lacdef} In order to describe the classes of higher order multipliers we are interested in, it will be necessary to introduce some notation for lacunary sets of general order. The standard Littlewood--Paley partition of the real line is the collection of intervals $\Lambda_1\coloneqq \{ \pm [2^k, 2^{k+1}):\, k\in \Z\}$ and it is a Whitney decomposition of $\R\setminus \{0\}$. For a finite dyadic interval $I \subset \R$ the standard Whitney partition $\mathcal W(I)$ of $I$ is the collection of the maximal dyadic subintervals $L\subset I$ such that $\mathrm{dist}(L,\R\setminus I)=|L|$. Now for any integer $\tau>1$ we set
\[
\Lambda _\tau \coloneqq \bigcup_{I \in \Lambda_{\tau-1}}\mathcal{W}(I)
\]
and call $\Lambda_\tau$ the standard Littlewood--Paley collection of intervals of order $\tau$. We denote by $\mathrm{lac}_\tau$ the collection of all endpoints of intervals in $\Lambda_\tau$. Observe that, as in \cite{Bonami}, the set $\mathrm{lac}_\tau$ has the explicit representation
\[
\mathrm{lac}_\tau = \left\{ \pm 2^{n_1}\pm 2^{n_2}+\cdots\pm 2^{n_\tau}:\, n_1>n_2>\cdots>n_\tau,\, n_j\in\Z\quad\forall j\right\}.
\]

For uniformity in the notation we also set $\Lambda_0=
\{(-\infty,0),(0,+\infty)\}$ and $\mathrm{lac}_0\coloneqq\{0\}$. It will be useful throughout the paper to truncate the scales of lacunary intervals and numbers by defining
\[
\Lambda_\tau ^{n}\coloneqq\left\{L\in\Lambda_\tau:\, |L|\geq n\right\},\qquad n\in 2^\Z.
\]
Accordingly $\mathrm{lac}_\tau ^n$ denotes endpoints of intervals in $\Lambda_\tau ^n$.

We need a smooth way to project to frequency intervals in $\Lambda_\tau$. For this we consider a smooth even function $0\leq \eta \leq 1$ such that $\eta$ is identically $1$ on $[-1/2,1/2]$ and vanishes off $[-5/8,5/8]$. For a positive integer $
\tau$ and $L\in\Lambda_\tau$ we define the (rescaled) $L$-th frequency component of some multiplier $m:\R\to \mathbb C$ as
\[
m_{L}(\xi)\coloneqq \eta(\xi) m(c_L + \xi |L|),\qquad \xi\in\R,
\]
with $c_L$ denoting the center of $L$.
\subsection{Higher order multipliers and endpoint estimates}\label{sec:hodef} With this notation at hand we will say that $m:\R\to\mathbb C$ is a H\"ormander-Mihlin multiplier of order $\tau$ if 
\[
\|m\|_{H_\tau}\coloneqq \sup_{|\alpha|\leq M}\sup_{L\in\Lambda_\tau} \| \partial^\alpha m_{ L}\|_{L^\infty}<+\infty,
\]
for some sufficiently large positive integer $M$ which we will not keep track of. Note that the higher order H\"ormander-Mihlin condition is essentially the natural assertion
\[
|\partial^\alpha m(\xi)| \lesssim \dist(\xi,\mathrm{lac}_{\tau-1})^{-\alpha},\qquad \xi \in\R\setminus \mathrm{lac_{\tau-1}}.
\]
Likewise we will say that a bounded function $m:\R\to \mathbb C$ is a Marcinkiewicz multiplier of order $\tau\in\N$ if the components $m_{ L}$ have bounded variation uniformly in $L\in\Lambda_\tau$. Here we use the standard variation norms defined for $r\in[1,\infty]$ as follows
\[
\left\| F\right\|_{\mathrm{V}_r}\coloneqq \sup_N\sup_{\substack{x_0<\cdots<x_N}} \left(\sum_{0\leq k \leq N}|F(x_{k+1})-F(x_k)|^r \right)^{\frac1r}.
\]
Note that usually Marcinkiewicz multipliers are defined by asking that the pieces $m\ind_L$ have bounded $1$-variation, uniformly in $L$. One can check that our definition, using the smooth cutoff $\eta$, is equivalent to the classical one. For one inequality of this equivalence we just use that $\eta\equiv 1$ on $[-1/2,1/2]$, while for the converse inequality it suffices to notice that $\|FG\|_{\mathrm{V}_1}\lesssim \|F\|_{\mathrm{V}_1}\|G\|_{\mathrm{V}_1}$ together with the fact that the support of $\eta$ is contained in three adjacent intervals of length $1$. We will actually consider the wider class of $R_{2,\tau}$-multipliers defined below.

\begin{definition}Let $\mathcal R $ to be the space of all functions of the form
\[
m=\sum_{I}c_I \ind_I
\]
with $I$ ranging over a family of disjoint arbitrary subintervals in $[1,2)$ and the coefficients $\{c_I\}_I$ satisfying
\[
\sum_{I}|c_I|^2 \leq 1. 
 \]
Then $\overline {\mathcal R}$ is the Banach space of functions $m\coloneqq \sum_a \lambda_a m_a$ with $\sum_a |\lambda_a|<+\infty$; we equip $\overline{\mathcal R}$ with the norm
\[
\|m\|_{\overline {\mathcal R}} \coloneqq \inf\left\{\sum_a |\lambda_a|:\, m=\sum_a \lambda_a m_a,\quad m_a\in  \mathcal R  \right\}.
\]
For $\tau\in\N$ we say that the bounded function $m:\R\to\mathbb C$ is an $R_{2,\tau}$-multiplier if 
\[
\|m\|_{ R_{2,\tau}}\coloneqq \sup_{L\in\Lambda_\tau}\| m_{L}\|_{\overline{\mathcal R}}<+\infty.
\]
\end{definition}
The class $R_{2,\tau}$ contains all Marcinkiewicz multipliers of order $\tau$ as well as H\"ormander-Mihlin multipliers of order $\tau$. This follows by the fact that H\"ormander multipliers of order $\tau\geq 1$ are Marcinkiewicz multipliers of the same order and the latter belong to the class $\mathcal V_{1,\tau}$ consisting of functions which have uniformly bounded $1$-variation on each lacunary interval of order $\tau$; the inclusion relationship then follows for example by the fact that $\mathcal V_{1,\tau}\subset R_{2,\tau}$, proved in \cite[{Lemma 2}]{CRS}. Our main result proves the sharp endpoint bound for multipliers in the class $R_{2,\tau}$.
\begin{theorem}\label{thm:main}
Let $\tau$ be a positive integer and $m\in R_{2,\tau}$. Then the operator $\mathrm{T}_mf\coloneqq (m\hat f)^\vee$ satisfies
\[
\left| \left\{ x\in\R: \, |\mathrm{T}_mf(x)|> \alpha \right\}\right|\lesssim \int_{\mathbb{R}} \frac{|f|}{\alpha} \bigg( \log \bigg(e+\frac{|f|}{\alpha}\bigg) \bigg)^{\tau/2}, \quad \alpha>0.
\]
Furthermore this estimate is best possible in the sense that the exponent $\tau/2$ in the right hand side of the estimate cannot be replaced by any smaller exponent. The implicit constant depends only on $\tau$ and the $R_{2,\tau}$-norm of $m$.
\end{theorem}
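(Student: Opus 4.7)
The plan is to prove the upper bound in two stages: first reduce multipliers in $R_{2,\tau}$ to a higher-order Tao--Wright-type square function, then establish a weak endpoint bound for this square function by iterating the Chang--Wilson--Wolff (CWW) inequality $\tau$ times. Using the atomic structure of $\overline{\mathcal R}$, one reduces to the case where each rescaled component $m_L$ has the form $\sum_J c^L_J\,\ind_J$ with $J$ ranging over some disjoint family of subintervals of $[1,2)$ and $\sum_J|c^L_J|^2\leq 1$. Undoing the affine change of variable $\xi\mapsto c_L+|L|\xi$ and a standard randomization/orthogonality argument then dominate $\|\mathrm{T}_m f\|_{L^{1,\infty}}$, uniformly in such representations, by the weak $L^1$ norm of a square function $\mathrm{S}_\tau f$ which, on each $L\in\Lambda_\tau$, square-sums smooth frequency projections onto an arbitrary disjoint subpartition of $L$.

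The central analytic input is therefore the weak-type estimate
\[
\|\mathrm{S}_\tau f\|_{L^{1,\infty}}\lesssim \int_{\R}\frac{|f|}{\alpha}\Bigl(\log\Bigl(e+\frac{|f|}{\alpha}\Bigr)\Bigr)^{\tau/2},\qquad\alpha>0,
\]
uniform over the subpartitions. By duality this is equivalent to an $\exp L^{2/\tau}$ bound for the adjoint of $\mathrm{S}_\tau$ applied to indicator functions, which is precisely the higher-order CWW-type weak characterization of $L\log^{\tau/2}L$ announced in the abstract. I would establish it by induction on $\tau$: the case $\tau=1$ is Tao--Wright's original realization of CWW via a single dyadic scale; for the inductive step one treats the lacunary family $\Lambda_{\tau-1}$ as an outer martingale filtration, invokes CWW once to absorb the outermost lacunary level with a $\log^{1/2}$ loss, and then applies the inductive hypothesis to the remaining $\Lambda_\tau$-structure living inside each parent $I\in\Lambda_{\tau-1}$ to obtain the remaining $\log^{(\tau-1)/2}$ loss.

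The main obstacle I expect is this iterated CWW step. CWW is naturally a statement about dyadic martingales, so at each of the $\tau$ levels the smooth Fourier projectors must be compared, with square-summable errors, to conditional expectations with respect to carefully chosen nested $\sigma$-algebras modelled on the scales of $\Lambda_1,\ldots,\Lambda_\tau$; simultaneously, at each stage of the iteration one must correctly identify the independence or randomization over signs $\eps_L$ that is available so that CWW applies cleanly. The bookkeeping between the continuous frequency decomposition defining $\Lambda_\tau$ and the discrete martingale structure underlying CWW is what becomes progressively delicate as $\tau$ grows, and it is where I anticipate most of the technical work.

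For the sharpness of the exponent $\tau/2$ I would test against the sign multiplier $m_{\bm{\eps}}\coloneqq\sum_{L\in\Lambda_\tau,\,|L|\in[1,2^N]}\eps_L\ind_L$ with $\eps_L\in\{\pm1\}$ and a smooth bump $f$ of unit $L^1$ norm essentially supported near the origin. Averaging in $\bm{\eps}$ by Khintchine identifies $\|\mathrm{T}_{m_{\bm{\eps}}} f\|_{L^{1,\infty}}$, up to absolute constants and for a suitable choice of signs, with the weak-$L^1$ norm of the $N$-truncation of $\mathrm{LP}_\tau f$, which via an explicit computation using the Fourier transforms of the $\ind_L$ is of order $(\log N)^{\tau/2}$ on the unit interval. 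Any hypothetical improvement of the exponent from $\tau/2$ to a strictly smaller $\sigma$ would force the right-hand side of the theorem to grow only like $N^\sigma$ on this test input, contradicting the lower bound for $N$ large enough.
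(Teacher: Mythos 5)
Your reduction to multipliers of the atomic form $\sum_J c_J\ind_J$ with $\ell^2$ coefficients is fine, and the pointwise Cauchy--Schwarz bound $|\mathrm T_m f|\lesssim\mathrm S_\tau f$ is valid, so the issue is whether your proposed weak-type bound for $\mathrm S_\tau$ can be established as you describe. It cannot: the pivotal ``duality'' step is a genuine error. A weak-type $(L\log^{\tau/2}L,\,L^{1,\infty})$ estimate is \emph{not} dual to an $\exp(L^{2/\tau})$ bound for the adjoint -- duality converts \emph{strong}-type bounds into strong-type bounds, and there is no general principle that turns an exponential-integrability estimate into a weak-$L^1$ endpoint. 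What the dual of Chang--Wilson--Wolff actually produces is the (strong, $L^1$-type) weak square function characterization of $L\log^{(\sigma+1)/2}L$ in Theorem~\ref{thm:second}: existence of functions $f_k$ with $\mathbf D_kf=\mathbf D_kf_k$ and a square-function bound in $L\log^{\sigma/2}L$. This is a statement about representing martingale differences, not a weak-type bound for a rough Fourier square function; the two are of rather different natures, and one would further expect the strong $L^1$ bound for $\mathrm S_\tau$ (over arbitrary disjoint subpartitions of each $L\in\Lambda_\tau$, i.e.\ a Rubio de Francia-type object) to simply fail. Relatedly, the paper does not iterate CWW $\tau$ times: Theorem~\ref{thm:second} is proved by a single application of CWW in general Orlicz form, and the induction on $\tau$ lives instead in the proof of the Zygmund--Bonami inequality (Proposition~\ref{prop:genzygpos}), which rests on Corollary~\ref{cor:SFCtau}.

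The component you omit entirely -- and which cannot be omitted -- is the Calder\'on--Zygmund decomposition. Even for $\tau=1$, Tao--Wright do not deduce their weak-type endpoint from a square-function bound alone; they insert the weak square function characterization of $L\log^{1/2}L$ into a CZ argument. The present paper does the same at general order $\tau$: Proposition~\ref{prop:CZ_decomp} gives a CZ decomposition adapted to $L\log^{\tau/2}L$, whose bad atoms $b_J$ have vanishing Fourier coefficients at all lacunary frequencies up to order $\tau$. The generalized Zygmund--Bonami inequalities (Propositions~\ref{prop:genzygpos}, \ref{prop:genzygcanc}) and their molecule upgrades then control the bad part, but substantial additional multiplier analysis -- the splitting into $\mathrm I$, $\mathrm{II}$, $\mathrm{III}$, the further splitting $\mathrm{III}_1+\mathrm{III}_2$, and the operators $\mathcal E$ and $\mathcal L$ treated in Lemmas~\ref{lem:local_sing}, \ref{lem:ngtmr} and \ref{lem:loc_middle} -- is needed to handle the singular behavior of the indicator multipliers at their endpoints. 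None of this is present in your plan, so it cannot be completed as written. Your sharpness discussion is closer to the mark and in the same spirit as the paper's alternative argument in \S\ref{sec:optr2tau}, but the claimed $(\log N)^{\tau/2}$ size of the truncated square function on a unit-mass bump is asserted rather than derived, and the passage from a particular random choice of signs to the original Orlicz endpoint still requires the Khintchine/Fubini step to be carried out carefully.
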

For $\tau=1$ the local version of the theorem above is contained in \cite{TW}. We note that Theorem~\ref{thm:main} easily implies the following local estimate: For every interval $I$ and $m\in R_{2,\tau}$ there holds
\[
\left| \left\{ x\in I: \, |\mathrm{T}_mf(x)|> \alpha \right\}\right|\lesssim \frac{1}{\alpha} \int_I |f|  \bigg( \log \bigg(e+\frac{|f|}{\langle |f| \rangle_I}\bigg)  \bigg)^{\tau/2}, \qquad \alpha>0,\qquad \mathrm{supp} f\subset I ,
\] 
where $\langle |f| \rangle_I:=\vert I \vert^{-1}\Vert f \Vert_{L^1(I)}$.
The global estimate of Theorem~\ref{thm:main} appears to be new even in the first order case $\tau=1$, although a proof of a global result can be deduced for the first order case $\tau=1$ from the methods in \cite{TW} without much additional work.

While H\"ormander-Mihlin multipliers are $R_{2,\tau}$ multipliers, they are in general much better-behaved as the case $\tau=1$ suggests: indeed for $\tau=1$ H\"ormander-Mihlin multipliers map $L^1$ to $L^{1,\infty}$, in contrast to the sharpness of the $L\log^{1/2}L\to L^{1,\infty}$ estimate for general Marcinkiewicz or $R_{2,1}$ multipliers. In analogy to the Littlewood--Paley square function $\mathrm{LP}_\tau$ of order $\tau$ it is natural to define a smooth version as follows. For $C>0$, $M\in\N$ and $L\in \Lambda_\tau$ we consider the class of bump functions
\[
\Phi_{L,M}\coloneqq \left\{\phi_L :\, \supp(\phi_L)\subseteq \frac54L,\quad \sup_{\alpha\leq M}|L|^\alpha \|\partial^\alpha \phi_L\|_{L^\infty}\leq 10^{10}\right\}.
\]
Now for some fixed large positive integer $M$ (whose precise value is inconsequential) suppose that  $\phi_L \in \Phi_{L,M}$ for all $L\in\Lambda_\tau$ and define, initially for $f$ in the Schwartz class,
\[
\mathrm S_\tau f\coloneqq \left(\sum_{L\in\Lambda_\tau} |\Delta_{L} f|^2\right)^{1/2},\qquad \Delta_{L} f(x)\coloneqq \int_{\R}  \phi_{L} (\xi)\widehat f(\xi) e^{2\pi i x \xi}\,\d \xi,\qquad x\in \R.
\]
The following theorem is the sharp endpoint estimate for higher order H\"ormander-Mihlin multipliers and corresponding square functions.

\begin{theorem}\label{thm:horm} Let $\tau$ be a positive integer and $m\in H_\tau$ be a H\"ormander-Mihlin multiplier of order $\tau$. Then
\[
\left| \left\{ x\in\mathbb{R}: \, |\mathrm{T}_mf(x)|> \alpha \right\}\right|\lesssim \int_\mathbb{R} \frac{|f|}{\alpha} \bigg( \log \bigg(e+\frac{|f|}{\alpha}\bigg) \bigg)^{(\tau-1)/2}, \qquad \alpha>0.
\]
The same holds for the smooth Littlewood--Paley square function $\mathrm S_\tau$ of order $\tau$ and these results are best possible. The implicit constant depends only on $\tau$ and the $H_\tau$-norm of $m$, and also on $M$ in the case of square functions.
\end{theorem}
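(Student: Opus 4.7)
The plan is to deduce Theorem~\ref{thm:horm} from Theorem~\ref{thm:main} at order $\tau-1$, by exploiting the extra smoothness of H\"ormander--Mihlin multipliers at the outermost lacunary scale: this smoothness lets one handle the coarsest ($\Lambda_1$) layer by classical Calder\'on--Zygmund theory, which contributes no logarithmic loss, while the remaining order-$(\tau-1)$ lacunary structure contributes the $\log^{(\tau-1)/2}$ factor delivered by Theorem~\ref{thm:main} one order lower. The base case $\tau=1$ is the classical $L^1\to L^{1,\infty}$ bound for H\"ormander--Mihlin multipliers and for the smooth Littlewood--Paley square function $\mathrm{S}_1$.

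First I would reduce the multiplier bound for $\mathrm{T}_m$ with $m\in H_\tau$ to the weak-type bound for $\mathrm{S}_\tau$. Fixing a smooth partition of unity $\{\psi_L\}_{L\in\Lambda_\tau}$ adapted to $\Lambda_\tau$ and companion bumps $\psi_L^{\flat}\in\Phi_{L,M}$ that are identically $1$ on $\supp\psi_L$, one writes
\[
\mathrm{T}_m f \;=\; \sum_{L\in\Lambda_\tau} \Delta_L^{\flat}\,\mathrm{T}_{m\psi_L}\,\Delta_L^{\flat} f,
\]
and a standard Khintchine/vector-valued Littlewood--Paley argument (combined with the uniform Calder\'on--Zygmund boundedness of the individual pieces $\mathrm{T}_{m\psi_L}$, which follows from $m\in H_\tau$) reduces the desired weak-type bound for $\mathrm{T}_m$ at exponent $(\tau-1)/2$ to the same bound for $\mathrm{S}_\tau$, with the constant controlled by $\|m\|_{H_\tau}$.

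Second, to bound $\mathrm{S}_\tau$, I would split $\Lambda_\tau$ along the outermost lacunary scale: each $L\in\Lambda_\tau$ lies in a unique $L_1\in\Lambda_1$, so
\[
\mathrm{S}_\tau f(x)^2 \;=\; \sum_{L_1\in\Lambda_1}\;\sum_{\substack{L\in\Lambda_\tau\\ L\subset L_1}}|\Delta_L f(x)|^2.
\]
After rescaling $L_1$ to unit length, for each fixed $L_1$ the inner collection becomes (a copy of) a lacunary partition of order $\tau-1$, and the corresponding inner square function is covered via randomization to a prototypical $R_{2,\tau-1}$ multiplier, to which Theorem~\ref{thm:main} at order $\tau-1$ applies and yields the uniform weak-type bound $L\log^{(\tau-1)/2}L\to L^{1,\infty}$. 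The outer sum over $L_1\in\Lambda_1$ is then assembled by the classical vector-valued smooth Littlewood--Paley $L^1\to L^{1,\infty}$ inequality at order~$1$, which contributes no further logarithmic loss.

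The main obstacle is executing this combination at the $L^{1,\infty}$ endpoint, where direct interpolation is unavailable and the $\ell^2$-triangle inequality fails. I plan to address it by merging the outer/inner split into a single Calder\'on--Zygmund decomposition of $f$ at height $\alpha$: the $L^2$ good part is handled by Plancherel and the uniform $L^\infty$-bound on $m$, while the bad part is analyzed by combining the vector-valued $L^1\to L^{1,\infty}$ smooth Littlewood--Paley inequality at the outermost scale (no log loss) with the iterated good-$\lambda$/Chang--Wilson--Wolff argument from the proof of Theorem~\ref{thm:main} at the inner $\tau-1$ scales (contributing exactly $\log^{(\tau-1)/2}$). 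Sharpness of the exponent $(\tau-1)/2$ is then verified by testing $\mathrm{S}_\tau$, and after randomization a typical $H_\tau$ multiplier $\sum_{L\in\Lambda_\tau}\varepsilon_L\phi_L$, against a Riesz product/lacunary cosine sum built from a large truncated subset of $\mathrm{lac}_\tau$, with the count of blocks calibrated to match the claimed exponent by induction on $\tau$ starting from the $\tau=2$ analog of the Tao--Wright sharp example.
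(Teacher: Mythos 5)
Your high-level intuition is right --- the $H_\tau$ condition lets one work one lacunary level shallower --- but the executable route you sketch does not close, and the paper's proof runs quite differently.

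The first step, reducing the weak-type bound for $\mathrm{T}_m$ to the corresponding bound for $\mathrm{S}_\tau$, is backwards. Khintchine and randomization derive square function estimates \emph{from} uniform multiplier estimates (this is exactly how the paper gets the $\mathrm{S}_\tau$ bound from the $\mathrm{T}_m$ bound, and how the Marcinkiewicz square function bound is derived in \S\ref{sec:cormult}); they do not deliver a multiplier bound from a square function bound. The usual $L^p$ chain $\|\mathrm{T}_m f\|_p \lesssim \|(\sum_L|\Delta_L^{\flat}\mathrm{T}_{m\psi_L}\Delta_L^{\flat} f|^2)^{1/2}\|_p \lesssim \|(\sum_L |\Delta_L^{\flat} f|^2)^{1/2}\|_p \lesssim \|f\|_p$ needs the reverse square function inequality and a vector-valued bound for the pieces, neither of which is available as a weak-$L^1$ estimate, and the chain cannot be concatenated at the endpoint anyway since $L^{1,\infty}$ is only a quasi-normed lattice. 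So the proposed reduction from $\mathrm{T}_m$ to $\mathrm{S}_\tau$ does not hold.

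The second step (split $\Lambda_\tau$ at the $\Lambda_1$ scale, apply Theorem~\ref{thm:main} at order $\tau-1$ inside each outer interval, then assemble with the vector-valued $L^1\to L^{1,\infty}$ Littlewood--Paley bound) hits the same chaining obstruction, which you correctly flag yourself. Your remedy --- ``merge the outer/inner split into a single Calder\'on--Zygmund decomposition'' --- is the right instinct, but as stated it is not a proof: once you do that merge, you are essentially re-running the entire argument of Theorem~\ref{thm:main}, including the Zygmund--Bonami estimates and the Fourier-side analysis of Lemma~\ref{lem:ngtmr}, and there is no shortcut that lets you quote Theorem~\ref{thm:main} as a black box. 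What the paper actually does is precisely this re-run, but organized at a single depth: it applies the Calder\'on--Zygmund decomposition of Proposition~\ref{prop:CZ_decomp} with $\sigma=\tau-1$, takes a partition of unity $\{\widetilde{\phi}_L\}_{L\in\Lambda_{\tau-1}}$ subordinated to the order-$(\tau-1)$ intervals (not two nested layers), and writes $\mathrm{T}_m=\sum_{L\in\Lambda_{\tau-1}}\mathrm{T}_L\Delta_L$ with $\mathrm{T}_L=\mathrm{T}_m\widetilde{\Delta}_L$. The point is that for $m\in H_\tau$ the pieces $\mathrm{T}_L$ are genuine Calder\'on--Zygmund operators, so the smoothness gain is realized directly in the role that the intervals $I$ played in Theorem~\ref{thm:main}, and the whole apparatus --- Remarks~\ref{rmrk:molecule_pos}, \ref{rmrk:molecule_canc}, Lemma~\ref{lem:local_sing}, Lemma~\ref{lem:ngtmr} --- goes through with the Zygmund--Bonami inequality of Proposition~\ref{prop:genzygpos} invoked at order $\tau-1$, producing the exponent $(\tau-1)/2$. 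This does not factor through a separate $\Lambda_1$-layer or a black-box application of Theorem~\ref{thm:main}.

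Finally, the paper's sharpness argument for Theorem~\ref{thm:horm} is more elementary than your Riesz-product iteration: it tests the local $L\log^{r}L\to L^{1,\infty}$ bound for $\mathrm{S}_\tau$ on a single bump supported near the origin and reads off $r\geq(\tau-1)/2$; the multiplier lower bound is then deduced from the square function lower bound by the randomization argument (in the direction that does work).
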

The case $\tau=1$ of this corollary is classical. The local version of the case of H\"ormander-Mihlin multipliers of order $\tau=2$ is implicit in \cite{TW} as it can be proved by combining \cite[{Proposition 5.1}]{TW} with \cite[{Proposition 4.1}]{TW}. All the higher order cases for the multipliers of the class $H_\tau$ appear to be new.

\subsection{The Chang--Wilson--Wolff inequality and a square function for \texorpdfstring{$L\log^{\tau/2}L$}{Llog L}} We work on the probability space $([0,1],\d x)$ throughout this section  unless otherwise stated. A central result in the approach in \cite{TW} was a weak characterization of the space $L\log^{1/2}L$ in terms of an integrable square function, inspired by the analogous and better-known characterisation of the Hardy space $H^1$. More precisely, the authors in \cite{TW} prove that if $f\in L\log^{1/2}L$ and $f$ has mean zero then for each $L\in\Lambda_1$ one can construct nonnegative functions $F_L$ such that
\begin{equation}\label{eq:TWsf}
|\Delta_L f| \lesssim F_L * \varphi_{|L|^{-1}}\quad\forall L\in\Lambda_1,\qquad \int_{\R} \left(\sum_{L\in\Lambda_1}|F_L|^2\right)^{1/2}\lesssim \|f\|_{L\log^{1/2}L},
\end{equation}
where $\Delta_L$ is as in \S\ref{sec:hodef} and
\[
\varphi_{\lambda}(x)\coloneqq\lambda^{-1}\varphi(x/\lambda) \coloneqq  \lambda^{-1}(1+ |x/\lambda|^2)^{-3/4},\qquad x\in\R.
\]
Here and throughout the paper we use local Orlicz norms and corresponding notation as described in \S\ref{sec:not:orl}.

There is a dyadic version: denoting by $\mathcal D_k$ the dyadic subintervals of $[0,1]$ of length $2^{-k}$, $k\in\N_0\coloneqq \N\cup\{0\}$, we consider the conditional expectation and martingale differences
\[
\mathbf E_k f\coloneqq \sum_{I\in\mathcal D_k} \langle f\rangle_I \ind_I,\qquad \mathbf D_k f\coloneqq \mathbf E_kf -\mathbf E_{k-1}f,\quad k\geq 1,\qquad \mathbf D_0f \coloneqq \mathbf E_0 f,\quad f\in L^1.
\]
For future reference we record the definition of the dyadic martingale square function
\[
\mathrm{S}_{\mathcal M} f \coloneqq \left( \sum_{k\geq 1} |\mathbf{D}_k f|^2 \right)^{1/2} .
\]
The dyadic analogue of \eqref{eq:TWsf} is that if $f\in L\log^{1/2} L$  then for each $k\in \N_0$ there exist functions $f_k$ such that
\begin{equation}\label{eq:TWsfdyad}
|\mathbf D_k f| \leq \mathbf E_k|f_k| \quad \forall k\in\N_0,\qquad \int_{[0,1]} \left(\sum_{k\geq 0}|f_k|^2\right)^{1/2}\lesssim \|f\|_{L\log^{1/2}L},
\end{equation}
In fact, the authors in \cite{TW} first prove \eqref{eq:TWsfdyad} by constructing the functions $f_k$ through a rather technical induction scheme, and then deduce \eqref{eq:TWsf} from \eqref{eq:TWsfdyad} via a suitable averaging argument. 

Several remarks are in order. Firstly one notices that \eqref{eq:TWsfdyad} combined with a simple duality argument based on the fact that $\exp (L^2)=(L\log^{1/2}L)^*$ implies the Chang--Wilson--Wolff inequality
\begin{equation}\label{eq:CWW}
\| f-\mathbf E_0f\|_{\exp (L^2)}\lesssim \| \mathrm{S}_\mathcal{M} f \|_{L^\infty} .
\end{equation}
Estimate \eqref{eq:CWW} was first proved in \cite{CWW}; see also the monograph \cite{Wilson} for an in-depth discussion of exponential square integrability in relation to discrete and continuous square functions in analysis. Thus the proof of \eqref{eq:TWsfdyad} in \cite{TW} is of necessity somewhat hard as it reproves \eqref{eq:CWW}. 

A second observation that goes back to \cite{TW}, see also \cite{ST} for an analogous remark on the dual side, is that \eqref{eq:TWsf} implies the weaker estimate
\begin{equation}\label{eq:genzyg}
\left(\sum_{L\in\Lambda_1^1 } \left\|\Delta_L f\right\|_{L^1} ^2 \right)^{\frac 12}\lesssim \|f\|_{L\log^{1/2}L}.
\end{equation}
Indeed, \eqref{eq:genzyg} follows by \eqref{eq:TWsf} and the Minkowski integral inequality. Alternatively, as observed in \cite{ST}, the dual of \eqref{eq:genzyg} is a ---again weaker--- consequence of the Chang--Wilson--Wolff inequality \eqref{eq:CWW}. 

Finally, a consequence of \eqref{eq:genzyg} is the Zygmund inequality  
\[
\left(\sum_{\lambda \in\mathrm{lac}_1 ^1} \left| \widehat f (\lambda) \right|^2\right)^\frac{1}{2}\lesssim \|f\|_{L\log^{1/2}L}.
\]
See for example \cite[{Theorem 7.6, Chapter XII}]{Zyg}. Indeed, if $L_\lambda$ is an interval which has $\lambda$ as an endpoint we have $|\widehat  f(\lambda)|\leq \|(\Delta_{L_\lambda} f)^\wedge\|_{L^\infty}\leq \|\Delta_{L_\lambda} f\|_{L^1}$ for a suitable choice of symbol in the definition of the Littlewood--Paley projection and Zygmund's inequality follows by \eqref{eq:genzyg}. 

All the estimates above have a higher order counterpart which plays an important role in our investigations in this paper. However, our point of view is somewhat different than in \cite{TW}. Firstly we want to emphasize that the proof of our main theorem, Theorem~\ref{thm:main}, hinges on a higher order version of the generalized Zygmund inequality \eqref{eq:genzyg} which loosely has the form
\begin{equation}\label{eq:genzygtauloose}
\left(\sum_{L\in\Lambda_ \tau ^1} \left\|\Delta_{L} f\right\|_{L^1} ^2 \right)^{\frac 12}\lesssim \|f\|_{L\log^{\tau/2}L},\qquad \tau\in \N.
\end{equation}
Estimates of the form \eqref{eq:genzygtauloose} will be referred to as \emph{generalized Zygmund--Bonami inequalities} and will be stated precisely and proved in Section~\ref{sec:genzyg}. The terminology comes from the fact that they imply the higher order version of Zygmund's inequality, due to Bonami \cite{Bonami}, and which can be stated as follows:
\begin{equation}\label{eq:Bonami}
\left(\sum_{ \lambda \in \mathrm{lac}_\tau ^1} \left| \widehat f (\lambda) \right|^2\right)^\frac{1}{2}\lesssim \|f\|_{L\log^{\tau/2}L},\qquad \tau \in \N.
\end{equation}

A novelty in our approach is the realization that the weak square function characterization \eqref{eq:TWsfdyad} of the space $L\log^{1/2}L$, in the dyadic case, is precisely the dual estimate of the Chang--Wilson--Wolff inequality \eqref{eq:CWW}. This relies on a duality argument involving quotient spaces which is inspired by the work of Bourgain, \cite{Bou}. We can then use the Chang--Wilson--Wolff inequality for general order of integrability, see \S\ref{sec:proofthmscnd},
\begin{equation}\label{eq:CWWgen}
\| f-\mathbf E_0f\|_{\exp (L^{2/(\sigma+1)})}\lesssim \| S_\mathcal{M} f \|_{\exp(L^{2/\sigma})},\qquad \sigma \geq 0,
\end{equation}
to conclude the following weak square function characterization of the space $L\log^{ (\sigma+1)/2}L$ in the form of the following theorem.

\begin{theorem}\label{thm:second}If $f\in L\log^{(\sigma+1)/2} L$ for some $\sigma\geq 0$ then for each $k\in\N_0$ there exist functions $f_k$ such that
\[
\mathbf D_k f = \mathbf D_k f_k  \quad \forall k\in\N_0,\qquad \left\| \left(\sum_{k\geq 0}|f_k|^2\right)^{1/2}\right\|_{L\log^{\sigma/2} L}\lesssim \|f\|_{L\log^{(\sigma+1)/2}L}.
\]
The implicit constant depends only on $\sigma$.
\end{theorem}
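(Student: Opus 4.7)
\emph{Plan.} The strategy is to realize Theorem~\ref{thm:second} as the predual of the generalized Chang--Wilson--Wolff inequality \eqref{eq:CWWgen}, via a Hahn--Banach quotient argument in the spirit of Bourgain \cite{Bou} combined with the Orlicz duality $(L\log^{\alpha/2}L)^*\cong\exp(L^{2/\alpha})$. First I would dispose of the $k=0$ term by taking $f_0:=\mathbf E_0 f$ (a constant), so that $\mathbf D_0 f_0=\mathbf D_0 f$ and this single term contributes at most $\|f\|_{L^1}\lesssim\|f\|_{L\log^{(\sigma+1)/2}L}$ to the right-hand side. Thus I may assume $\mathbf E_0 f=0$ and need only construct $(f_k)_{k\geq 1}$.

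Introduce the Banach space $Y := L\log^{\sigma/2}L([0,1];\ell^2(\mathbb N))$ with norm $\|(g_k)\|_Y := \|(\sum_{k\geq 1}|g_k|^2)^{1/2}\|_{L\log^{\sigma/2}L}$, and the closed subspace $N := \{(g_k)\in Y:\ \mathbf D_k g_k = 0\ \forall k\geq 1\}$. Since the quotient norm $\|[(\mathbf D_k f)]\|_{Y/N}$ is precisely the infimum of $\|(f_k)\|_Y$ over sequences with $\mathbf D_k f_k = \mathbf D_k f$, it suffices to prove
\[
\|[(\mathbf D_k f)_{k\geq 1}]\|_{Y/N}\lesssim \|f\|_{L\log^{(\sigma+1)/2}L},
\]
after which any representative within a factor of two of the infimum yields the required $(f_k)$. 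For this I compute $(Y/N)^*=N^\perp$ by Hahn--Banach: identifying $Y^*\cong \exp(L^{2/\sigma})([0,1];\ell^2)$ via the integration pairing, the annihilation $N\perp (h_k)$ decouples in $k$ and forces $h_k\in(\ker\mathbf D_k)^\perp$, which since $\mathbf D_k$ is a self-adjoint $L^2$-projection is equivalent to $h_k=\mathbf D_k h_k$. So the admissible test sequences are exactly those whose $k$-th entry is a pure $k$-th level martingale difference; in particular the $h_k$'s are pairwise $L^2$-orthogonal and have vanishing $\mathbf E_0$-mean.

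Given such $(h_k)$ with $\|(h_k)\|_{\exp(L^{2/\sigma})(\ell^2)}\leq 1$, set $H:=\sum_{k\geq 1}h_k$; this series converges in $L^2([0,1])$ by orthogonality, and yields a function with $\mathbf D_k H=h_k$, $\mathbf E_0 H=0$, and $\mathrm S_{\mathcal M}H=(\sum_{k\geq 1}|h_k|^2)^{1/2}$. Using self-adjointness of $\mathbf D_k$ together with $h_k=\mathbf D_k h_k$, the dual pairing becomes
\[
\sum_{k\geq 1}\int_0^1 \mathbf D_k f\cdot h_k \;=\;\sum_{k\geq 1}\int_0^1 f\cdot \mathbf D_k h_k \;=\;\int_0^1 f\cdot H,
\]
and an Orlicz--Hölder inequality in the pairing $L\log^{(\sigma+1)/2}L\times\exp(L^{2/(\sigma+1)})$ followed by \eqref{eq:CWWgen} applied to the mean-zero function $H$ gives
\[
\Big|\int_0^1 fH\Big|\lesssim \|f\|_{L\log^{(\sigma+1)/2}L}\,\|H\|_{\exp(L^{2/(\sigma+1)})}\lesssim \|f\|_{L\log^{(\sigma+1)/2}L}\,\|\mathrm S_{\mathcal M}H\|_{\exp(L^{2/\sigma})}\leq \|f\|_{L\log^{(\sigma+1)/2}L}.
\]
Taking the supremum over admissible $(h_k)$ produces the required quotient estimate.

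The main obstacle I anticipate is making the vector-valued Orlicz duality $Y^*\cong\exp(L^{2/\sigma})([0,1];\ell^2)$ fully rigorous --- in particular the Hahn--Banach extraction of a bona fide representative $(f_k)\in Y$ rather than only a minimizing sequence, which however is easily absorbed by accepting a factor of two in the final constant. A minor additional point is formulating \eqref{eq:CWWgen} as a genuine Orlicz-to-Orlicz inequality rather than its original $L^\infty$-to-$\exp(L^2)$ form from \cite{CWW}; this is a standard distributional-level extrapolation of the Chang--Wilson--Wolff argument, which we expect to state and prove separately in \S\ref{sec:proofthmscnd}.
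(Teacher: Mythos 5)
Your proposal follows essentially the same route as the paper's proof: dualize against the generalized Chang--Wilson--Wolff inequality \eqref{eq:CWWgen} through the quotient $Y/N$ and its annihilator, show (exactly as in Lemma~\ref{lem:g_k_projection}) that elements of $N^\perp$ are genuine $k$-th martingale differences, and estimate the pairing $\int fH$ via Orlicz--H\"older and \eqref{eq:CWWgen}. The technical point you correctly flag --- making the vector-valued Orlicz duality $Y^*\cong\exp(L^{2/\sigma})([0,1];\ell^2)$ and the Hahn--Banach/Riesz representation rigorous --- is precisely what the paper resolves by first carrying out the argument on the finite-dimensional subspaces $X_N,Y_N$ of functions with finite Haar expansion, and then passing to the limit using that truncated Haar series converge in $L\log^{(\sigma+1)/2}L$.
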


We will prove Theorem~\ref{thm:second} in Section~\ref{sec:SFCtau} as a consequence of \eqref{eq:CWWgen}. While this is a rather deep implication, as in the case $\sigma=0$, it is not hard to see that the conclusion of Theorem~\ref{thm:second} combined with the fact $\exp(L^{2/\sigma})=(L\log^{\sigma/2}L)^*$ actually implies the Chang--Wilson--Wolff inequality \eqref{eq:CWWgen} for the same value of $\sigma$. We note that while the conclusion of Theorem~\ref{thm:second} and of the subsequent corollary below are already in \cite{TW} for the case $\sigma=0$, our approach provides an alternative proof even for $L\log^{1/2}L$. This approach has the advantage of being able to deal with all spaces $L\log^{(\sigma+1)/2}L$ at once, hence leading to the more general conclusion of Theorem~\ref{thm:second}.

As in the case $\sigma=0$, Theorem~\ref{thm:second} readily implies the continuous version below.

\begin{corollary}\label{cor:SFCtau} Let $J\subset \R$ be a finite interval,  $\sigma\geq 0$ and $f\in L\log^{(\sigma+1)/2} L(J)$. Then for each $L\in \Lambda_1 $ with $|L|\geq |J|^{-1}$ there exists a nonnegative function $F_L$ such that for every $\gamma \geq 1$
\[
|\Delta_L f|\lesssim F_L * \varphi_{(|L||J|)^{-1}},\quad \left\| \left(\sum_{L\in\Lambda_1^{|J|^{-1}}}|F_L|^2\right)^{1/2}\right\|_{{L\log^{\sigma/2}L\left(\gamma J,\tfrac{\d x}{|J|}\right)}}\lesssim \|f\|_{L\log^{(\sigma+1)/2}L\left(J,\tfrac{\d x}{|J|}\right)},
\]
with implicit constant depending only on $\gamma$ and $\sigma$. If in addition  $\int_J f=0$ then the conclusion holds for all $L\in\Lambda_1$ with the summation extending over all $L\in\Lambda_1$. With or without this additional assumption, for $|L|\geq |J|^{-1}$ the functions $F_L$ are supported in $5J$. The implicit constant depends only on $\gamma$ and $\sigma$, as indicated.
\end{corollary}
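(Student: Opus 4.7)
The plan is to transfer the dyadic statement of Theorem~\ref{thm:second} to the continuous Littlewood--Paley setting by averaging over translates of the dyadic grid, following the blueprint used in \cite{TW} for the case $\sigma=0$. After rescaling $x\mapsto x/|J|$, we may assume $J=[0,1]$, so that the condition $|L|\geq |J|^{-1}$ becomes $|L|\geq 1$ and the mollifier scale $(|L||J|)^{-1}$ becomes $|L|^{-1}$; the Orlicz norms in the statement simultaneously become the standard ones on $[0,1]$ and $\gamma J$.

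For each shift parameter $t\in[0,2]$, let $\mathcal{D}^t$ denote the standard dyadic grid on $\R$ translated by $t$, and let $\mathbf{E}^t_k$, $\mathbf{D}^t_k$ be the associated conditional expectations and martingale differences at scale $2^{-k}$. Applying the $\mathcal{D}^t$-translated form of Theorem~\ref{thm:second} to $f$ yields, uniformly in $t$, nonnegative functions $f^t_k$ supported in $3J$ such that $|\mathbf{D}^t_k f|\leq \mathbf{E}^t_k f^t_k$ for all $k\geq 0$ and
\[
\Big\| \Big(\sum_{k\geq 0} |f^t_k|^2\Big)^{1/2} \Big\|_{L\log^{\sigma/2}L(3J)} \lesssim \|f\|_{L\log^{(\sigma+1)/2}L(J)}.
\]
Next, for $L\in\Lambda_1$ with $|L|\geq 1$ let $k_L$ be the unique integer with $2^{k_L}\leq |L|<2^{k_L+1}$. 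Since the kernel of $\Delta_L$ is rapidly decaying at scale $|L|^{-1}$ and oscillates at frequencies $\sim |L|$, averaging over the shift $t$ allows the jumps of the dyadic martingale at scale $2^{-k_L}\sim |L|^{-1}$ to reproduce, up to a convolution by $\varphi_{|L|^{-1}}$, the smooth frequency projection onto $L$. A direct kernel computation yields the pointwise bound
\[
|\Delta_L f|(x) \lesssim \int_0^2 \bigl(|\mathbf{D}^t_{k_L} f|*\varphi_{|L|^{-1}}\bigr)(x)\, \d t.
\]
Defining
\[
F_L(x) \coloneqq \ind_{5J}(x)\int_0^2 \mathbf{E}^t_{k_L} f^t_{k_L}(x)\, \d t
\]
produces a nonnegative function supported in $5J$ with $|\Delta_L f|\lesssim F_L*\varphi_{|L|^{-1}}$.

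For the square function bound, two applications of Minkowski's integral inequality (first over $t$, then in the Orlicz norm on $\gamma J$), combined with the boundedness of the dyadic maximal operator on $L\log^{\sigma/2}L$, give
\[
\Big\| \Big(\sum_{L\in\Lambda_1^1} |F_L|^2\Big)^{1/2}\Big\|_{L\log^{\sigma/2}L(\gamma J)} \lesssim \sup_{t\in[0,2]}\Big\| \Big(\sum_{k\geq 0}|f^t_k|^2\Big)^{1/2}\Big\|_{L\log^{\sigma/2}L(3J)} \lesssim \|f\|_{L\log^{(\sigma+1)/2}L(J)}.
\]
The mean-zero case is identical except that the summation is extended over all of $\Lambda_1$: the low-frequency contributions with $|L|<1$ vanish in the averaging thanks to $\int_J f=0$, so no extra argument is needed.

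The main obstacle is the pointwise kernel estimate relating the smooth projection $\Delta_L$ to the averaged dyadic martingale at the matching scale $k_L$; for $\sigma=0$ this is done in \cite{TW} by an explicit computation with the symbol $\phi_L$ and the shifted Haar system, and the same argument transfers verbatim since it does not depend on the Orlicz exponent. One must, however, track carefully the localization needed to guarantee $\supp F_L\subset 5J$ and the correct pairing of the two branches of each $L\in\Lambda_1$ with the martingale differences; these are routine but somewhat bookkeeping-heavy modifications.
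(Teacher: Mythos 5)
Your overall strategy --- transferring the dyadic statement of Theorem~\ref{thm:second} to the continuous setting by averaging over grid shifts --- is the same blueprint as the paper's, which translates $f$ rather than the grid (an equivalent device). But there is a genuine gap in the central pointwise estimate. You claim
\[
|\Delta_L f|(x) \lesssim \int_0^2 \bigl(|\mathbf{D}^t_{k_L} f|*\varphi_{|L|^{-1}}\bigr)(x)\, \d t,
\]
using only the \emph{single} martingale scale $k_L$ matching $|L|$. This is false. The smooth projection $\Delta_L$ interacts with \emph{all} dyadic scales: the compositions $\Delta_L\mathbf{D}_k$ are small but nonzero for $k\neq k_L$, and --- crucially --- they can be the dominant terms. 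Concretely, take $f(x)=\cos(2\pi\xi_0 x)\psi(x)$ with $\psi$ a wide bump and $\xi_0=(2-\epsilon)\,2^{k_L}$ with small $\epsilon>0$, so that $\xi_0$ lies well inside $L$ and $|\Delta_L f|\sim 1$ on a large set. Since $\xi_0\cdot 2^{-k_L}\approx 2$ and $\xi_0\cdot 2^{-k_L+1}\approx 4$, averaging $\cos(2\pi\xi_0\,\cdot\,)$ over any interval of length $2^{-k_L}$ or $2^{-k_L+1}$ produces a nearly vanishing result, of size $O(\epsilon)$, \emph{uniformly in the grid shift} $t$; hence $|\mathbf{D}^t_{k_L}f|=O(\epsilon)$ for all $t$ and the right-hand side is $O(\epsilon)$, while the left-hand side stays of order one. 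This resonance phenomenon at frequencies near the dyadic endpoints is exactly why the paper defines
\[
F_L(x)=\sum_{k\geq 0} 2^{-|\log_2|L|-k|/2}\int_{[-1/3,1/3]}|f_{\theta,k}(x+\theta)|\,\d\theta,
\]
summing over \emph{all} $k$ with an exponentially decaying weight that reflects the almost-orthogonality $\|\Delta_L\mathbf{D}_k\|_{L^2\to L^2}\lesssim 2^{-c|\log_2|L|-k|}$; the resonance that kills the scale $k_L$ does not kill the neighboring scales simultaneously, and the tail weight makes the square-function estimate survive. Your single-scale shortcut cannot be patched without reintroducing this sum.

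A secondary gap is the treatment of the mean-zero case. You assert that the contributions from $|L|<|J|^{-1}$ ``vanish in the averaging'' because $\int_J f=0$, but $\Delta_L f$ does not vanish for such $L$; the cancellation only improves its \emph{decay}. The paper handles these scales separately, setting $F_L:=|\Delta_L f|$ and using the mean-zero hypothesis to derive the pointwise bound $|\Delta_L f|\lesssim\varphi_{|L|^{-1}}*(|L|\|f\|_{L^1}\ind_{[0,1]})$, whose $\ell^2_L$-sum is controlled by $\|f\|_{L^1}$ because there are only two intervals per dyadic length. You need to supply an argument of this type; it does not come for free from the averaging.

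Finally, a small point: Theorem~\ref{thm:second} gives $\mathbf{D}_kf=\mathbf{D}_k f_k$ rather than $|\mathbf{D}_kf|\leq\mathbf{E}_k f_k$ with $f_k\geq 0$. One can pass to nonnegative dominators (e.g. replace $f_k$ by $2|f_k|$ using $|\mathbf{D}_kf_k|\leq\mathbf{E}_k|f_k|+\mathbf{E}_{k-1}|f_k|$), but this should be said rather than assumed.
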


\subsection{Background and history} The fact that Marcinkiewicz multipliers are $L^p$-bounded is classical; see for example 
\cite[Theorem 8.13]{duo}. The first endpoint result concerning multiplier operators of Marcinkiewicz-type is arguably a theorem due to Bourgain \cite{Bou} which asserts that, in the periodic setting, the classical Littlewood--Paley square function $\mathrm {LP}_1$ has operator norm $\|\mathrm {LP}_1\|_{p\to p}\simeq (p-1)^{-3/2}$ as $p\to 1^+$. Tao and Wright proved in \cite{TW} the optimal local endpoint estimate $L\log^{1/2}L \to L^{1,\infty}$ for the class of $R_2=R_{2,1}$ multipliers, which contains Marcinkiewicz multipliers. It was later observed in \cite{Bakas} that Bourgain's estimate follows by the endpoint bound of \cite{TW} combined with a randomization argument and Tao's converse extrapolation theorem from \cite{Tao}. Recently, Lerner proved in \cite{Lerner} effective weighted bounds for the classical Littlewood--Paley square function $\mathrm{LP}_1$; these weighted bounds imply the correct $p$-growth for the $L^p\to L^p$ norms of these operators as $p\to 1^+$. In addition, as observed in \cite{Bakas_L^p}, the arguments of \cite{Lerner} can be used to establish weighted $A_2$ estimates for $\mathrm{LP}_{\tau}$ that imply sharp $L^p\to L^p$ estimates for $\mathrm{LP}_{\tau}$ as $p\to 1^+$ for any order $\tau$. The class $R_2$ contains all multipliers $m$ whose pieces $m_L$ have bounded $q$-variation uniformly in $L\in\Lambda_1$, for all $1\leq q< 2$; see \cite{CRS} where the authors showed that all $R_2$ multipliers are bounded on $L^p$ for $p\in(1,\infty)$.

As already discussed, the authors in \cite{TW} rely on the weak square function characterization of $L\log^{1/2}L$ as in \eqref{eq:TWsf} for their proof. Our argument here is a bit different, relying on the weaker generalized Zygmund--Bonami inequality instead; a hint of a different proof already appears in \cite[p. 540]{TW}. The Zygmund inequality first appeared in \cite{Zygpap} in its dual form; see also \cite[{Theorem 7.6, Chapter XII}]{Zyg} The higher lacunarity order \eqref{eq:Bonami} is due to Bonami and it is contained in \cite{Bonami}. We note that our results provide an alternative proof for the case of finite order lacunary sets. On the other hand, a dual version of the generalized Zygmund--Bonami inequality in the first order case (that is, inequality \eqref{eq:genzyg}) appears in \cite{ST}.

The $L^p$-boundedness of Marcinkiewicz multipliers of order one and higher in the periodic setting was established by Marcinkiewicz in \cite{M}; see also Gaudry's paper \cite{Gaudry}. Generalized versions of H\"ormander-Mihlin and Marcinkiewicz multipliers, together with their square function counterparts of higher order, have been introduced in \cite{SjSj} in a very broad context. There the authors proved the equivalence of $L^p$-boundedness between different classes of such multipliers. Our setup is focused on the finite order lacunary case and provides the optimal endpoint bounds for such classes.

\subsection{Structure} The general structure of the rest of this paper is as follows. Section~\ref{sec:not} contains some basic facts and properties of Orlicz spaces, together with a small toolbox for dealing with lacunary sets; the reader is encouraged to skip this section on a first reading and only consult it when necessary. In Section~\ref{sec:SFCtau} we will prove Theorem~\ref{thm:second} and Corollary~\ref{cor:SFCtau}. In Section~\ref{sec:genzyg} we will critically use Corollary~\ref{cor:SFCtau} in order to conclude the generalized Zygmund--Bonami inequality of arbitrary order alluded to above. This inequality will be stated and proved in different versions which can be local or non-local, depending on the type of cancellation assumptions we impose. The reader can find the corresponding statements in Propositions~\ref{prop:genzygpos} and ~\ref{prop:genzygcanc}; see also Corollary~\ref{cor:genzygbonami}. In Section~\ref{sec:CZdecomp} we present the details of a Calder\'on-Zygmund decomposition for the Orlicz space  $L\log^{\sigma/2}L$, adapted to the needs of this paper. The proof of Theorem~\ref{thm:main} takes up the best part of Section~\ref{sec:proofmain} where the Calder\'on-Zygmund decomposition of Section~\ref{sec:CZdecomp} is combined with the generalized Zygmund--Bonami inequality of Section~\ref{sec:genzyg}. The proofs of Theorem ~\ref{cor:LPsf} and Theorem~\ref{thm:horm} are discussed in Section~\ref{sec:cormult} as a variation of the proof of Theorem~\ref{thm:main}. 

\section{Preliminaries and notation}\label{sec:not}In this section we collect several background definitions and notations that will be used throughout the paper.

\subsection{Some basic facts for certain classes of Orlicz spaces} \label{sec:not:orl}  We adopt standard nomenclature for Young functions and Orlicz spaces as for example in \cite[{Chapter 10}]{Wilson}. Given a Young function $\Phi:[0,\infty]\to [0,\infty]$ we will use the following notation for local $L^\Phi$ averages: For a finite interval $I\subset \R$ 
\[
\langle |f|\rangle_{\Phi,I}\coloneqq \inf\left\{\lambda>0:\, \frac{1}{|I|}\int_I \Phi\left(\frac{|f(x)|}{\lambda}\right)\,\d x\leq 1\right\}.
\]
For the usual local $L^p$ averages we just set $\langle|f|\rangle_{p,I}\coloneqq |I|^{-1/p}\|f\|_{L^p(I)}$ for $1\leq p<\infty$. For $\sigma\geq 0$ we use the Young function $B_\sigma(t)\coloneqq t(\log(e+t))^\sigma$ to define local $L\log^\sigma L$-spaces and we will also write
\begin{equation}\label{eq:OrliczAvg}
\|f\|_{L\log^\sigma L{\left(I,\tfrac{\d x}{|I|}\right)}}\coloneqq \langle |f |\rangle_{B_\sigma,I}\simeq\frac{1}{|I|}\int_I |f(x)|  \left(\log\left(e+\frac{|f(x)|}{\langle|f|\rangle_{1,I}}\right)\right)^\sigma\,\d x.
\end{equation}
The last approximate equality can be found in \cite[Theorem 10.8]{Wilson}. For future reference it is worth noting that the function $B_\sigma$ is submultiplicative and thus doubling; see \cite[{\S 5.2}]{UMP}. We will write instead $L^{B_{\sigma}}(\R)$ to denote the (global) space of measurable functions $f$ such that $\int_{\R}B_{\sigma}(|f|)<+\infty.$

The dual Young function of $B_\sigma$ can be taken to coincide with $E_{\sigma^{-1}}(t)\coloneqq \exp(c_\sigma t^{1/\sigma})-1$ for $t\gtrsim 1$; here we insist on the equality only for sufficiently large  values of $t$; with this function we define the local $\exp(L^{1/\sigma})$ norms and we have the H\"older inequality $\langle|fg|\rangle_{1,I}\lesssim \langle |f|\rangle_{B_\sigma,I}\langle |f|\rangle_{E_{\sigma^{-1}, I}}$. We reserve the notation $L\log^\sigma L$ and $\exp(L^{1/\sigma})$ for the case $I=[0,1]$ and the space of functions supported in $[0,1]$ for which
\[
\|f\|_{L\log^\sigma L} \coloneqq \langle|f|\rangle_{B_\sigma,[0,1]}<+\infty,\quad \|f\|_{\exp{(L^{1/\sigma})}}\coloneqq \langle|f|\rangle_{E_{\sigma^{-1}},[0,1]}\simeq\sup_{p\geq 2}p^{-\sigma}\|f\|_p   <+ \infty,
\]
respectively; see \cite[{\S2.2.4}]{Tri} for the last approximate equality; We will also use the well known duality identification $ (L\log^\sigma L)^*\cong \exp(L^{1/\sigma})$. For $\sigma=0$ we adopt the convention that $L\log^\sigma L=L^1$ and $\exp(L^{1/\sigma})=L^\infty$. The following Minkowski-type integral inequality
\[
\left\| \{\|f_k\|_{L\log^\sigma L}\}\right\|_{\ell^2 _k}\lesssim \left\| \| \{f_k\}\|_{\ell^2 _k} \right\|_{L\log^\sigma L}
\]
 will be used with no particular mention. Its proof can be obtained by a simple duality argument.

\subsection{Some tools for handling lacunary sets}\label{sec:lacnot} We introduce some useful notions concerning lacunary sets of arbitrary order. Let $\tau\geq 1$ and $L\in \Lambda_\tau$. We will denote by $\widehat{L}$ the unique interval $\widehat L\in\Lambda_{\tau-1}$ such that $L\subset \widehat L$ and call $\widehat L$ the (lacunary) \emph{parent} of $L$. Furthermore we will denote by $\lambda(L)$ the unique element $\lambda\in\mathrm{lac}_{\tau-1}$ such that $\dist(L,\R\setminus \widehat L)=\dist(L,\lambda)=|L|$. We note that $\lambda(L)$ is one of the endpoints of $\widehat{L}$. These definitions also make sense in the case $\tau=1$ remembering the definitions of $\Lambda_0$ and $\mathrm{lac}_0$.

If $L\in \Lambda_\tau$ then ${L^*}\coloneqq L-\lambda(L) \in\Lambda_1$; in fact $L^*$ is one of the intervals $(-2|L|,-|L|)$ or $(|L|,2|L|)$ depending on the original relative position of $L$ with respect to $\lambda(L)$. The point of the definitions above is that if $L\in\Lambda_\tau$ then, upon fixing a suitable choice of bump functions $\phi_L \in \Phi_{L,M}$, we can write the identity
\[
\Delta_L f =e^{2\pi i \lambda(L)\bigcdot} \Delta_{L^*}\left(e^{-2\pi i \lambda(L)\bigcdot}\Delta_{\widehat L}f\right)=e^{2\pi i \lambda(L)\bigcdot} \Delta_{L^*}\left(e^{-2\pi i \lambda(L)\bigcdot}f\right).
\]
This will be crucially used in several parts of the recursive arguments in the paper. We will also use the intuitive notation $\Delta_{|L|}\coloneqq \Delta_{L^*\cup(-L^*)}$ for the smooth Littlewood--Paley projection of first order at frequencies $|\xi|\simeq |L|$ which takes advantage of the fact that $L^*$ essentially only depends on the length of $L$. The following notation will be useful to localize in a certain lacunary parent:
\[
\Lambda_\tau ^n (L')\coloneqq\{L\in\Lambda_\tau ^n:\, L\subset L'\},\qquad L'\subset \R;
\]
similarly we define $\Lambda_\tau (L')$. Note that if $L'\in\Lambda_{\tau-1}$ and $L\in\Lambda_\tau(L')$ then necessarily $\widehat{L}=L'$.

The following simple lemma relies on the fact that lacunary sets are invariant under dyadic dilations with respect to the origin and will be used to allow rescaling of intervals of dyadic length to $[0,1]$. 
\begin{lemma}\label{lem:dilation}Let $\tau\in\N$ and $a\in 2^\Z$. Then $a^{-1}\mathrm{lac}_\tau ^a \coloneqq \{\lambda/a:\, \lambda\in\mathrm{lac} _\tau ^a\}= \mathrm{lac}_{\tau} ^1.$
\end{lemma}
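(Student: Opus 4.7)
The plan is to establish the stronger claim that the entire collection $\Lambda_\tau$ is invariant under dyadic dilations, in the sense that for every $m\in\Z$ the map $L\mapsto 2^m L$ is a bijection of $\Lambda_\tau$ onto itself that scales lengths by $2^m$. Once this is known, applying it with $2^m=a^{-1}$ gives $a^{-1}\Lambda_\tau^a=\Lambda_\tau^1$ as collections of intervals: indeed if $L\in\Lambda_\tau$ has $|L|\geq a$, then $a^{-1}L\in\Lambda_\tau$ has length $\geq 1$, and conversely every $L'\in\Lambda_\tau^1$ is of the form $a^{-1}L$ for some $L\in\Lambda_\tau^a$. Since $\mathrm{lac}_\tau^n$ is by definition the set of endpoints of the intervals of $\Lambda_\tau^n$, passing to endpoints immediately yields $a^{-1}\mathrm{lac}_\tau^a=\mathrm{lac}_\tau^1$.

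The dilation invariance of $\Lambda_\tau$ will be proved by induction on $\tau$. For the base case $\tau=1$, the dilation $L\mapsto 2^m L$ sends $\pm[2^k,2^{k+1})$ to $\pm[2^{k+m},2^{k+m+1})$, so it is clearly a length-scaling bijection of $\Lambda_1$. For the inductive step, suppose the statement is known for $\tau-1$. The crucial point is that the standard Whitney partition is itself scale-invariant: for any finite dyadic interval $I$ we have $\mathcal{W}(2^m I)=\{2^m L : L\in\mathcal{W}(I)\}$, because both the dyadic structure and the defining condition $\mathrm{dist}(L,\R\setminus I)=|L|$ (together with maximality) are preserved under the dilation $x\mapsto 2^m x$. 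Combining this with the inductive hypothesis applied to $\Lambda_{\tau-1}$ gives
\[
2^m\Lambda_\tau \;=\; \bigcup_{I\in\Lambda_{\tau-1}} 2^m\mathcal{W}(I) \;=\; \bigcup_{I\in\Lambda_{\tau-1}} \mathcal{W}(2^m I) \;=\; \bigcup_{I'\in\Lambda_{\tau-1}} \mathcal{W}(I') \;=\; \Lambda_\tau,
\]
completing the induction. There is no serious obstacle here: the only thing to verify carefully is the scale-invariance of $\mathcal{W}(\cdot)$, which is entirely routine from the definitions.
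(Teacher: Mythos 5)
Your proof is correct and fills in precisely the argument the paper gestures at when it says the lemma "relies on the fact that lacunary sets are invariant under dyadic dilations with respect to the origin": dyadic dilations preserve $\Lambda_1$, the Whitney construction $\mathcal{W}(\cdot)$ commutes with dyadic dilations, so $\Lambda_\tau$ is dilation-invariant by induction, and passing to endpoints (with the length constraint rescaled accordingly) gives the stated identity.
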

To showcase the typical application of this lemma let $J\subset \R$ be an interval of dyadic length and $a=\{a_\lambda\}_{\lambda\in \mathrm{lac}_\tau}$  a finite collection of complex coefficients. By a standard change of variables
\begin{equation}\label{eq:rescaling}
p_a(y)\coloneqq \sum_{\lambda\in\mathrm{lac}_\tau ^{|J|^{-1}}} a_{\lambda}e^{i\lambda y},\qquad \langle|p_a|\rangle_{p,J} ^p= \int_{[0,1]}\bigg|\sum_{\lambda\in|J|\mathrm{lac}_\tau ^{|J|^{-1}}} a_{\lambda|J|^{-1}}e^{i\lambda y} \bigg|^p\, \d y ,
\end{equation}
and we crucially note that the sum on the right hand side is for $\lambda \in|J|\mathrm{lac}_\tau ^{|J|^{-1}}=\mathrm{lac}_\tau ^1$ because of the lemma. Of course the same change of variables will be valid  for $\langle|p_a|\rangle_{\Phi,J}$ for any Young function $\Phi$. We will use this rescaling argument in several places in the paper.

\subsection{Other notation} For any function $g$ and $\lambda>0$ we write $g_\lambda(x)\coloneqq \lambda^{-1}g(x/\lambda)$ for the $L^1$-rescaling. Two special kinds of bump functions will appear. Firstly $\omega(x)\coloneqq (1+|x|^2)^{-N/2}$ is the smooth tailed indicator of $[-1/2,1/2]$ with $N$ any large positive integer. It will be enough to take $N=10$ for the arguments in this paper but more decay is available if needed. We will also write $\varphi (x)\coloneqq  (1+|x|^2)^{-3/4}$ which is still an $L^1$-bump but has only moderate decay. In some cases we are restricted to using $\varphi$, most notably in the statement and proof of Corollary~\ref{cor:SFCtau}.

\section{A weak square function characterization of \texorpdfstring{$L\log^{\sigma/2}L$}{LlogL}}\label{sec:SFCtau} In this section we provide the proof of Theorem~\ref{thm:second} as a consequence of the Chang--Wilson--Wolff inequality of general order \eqref{eq:CWWgen}. The conclusion of Corollary~\ref{cor:SFCtau} will then follow by a standard averaging argument using almost orthogonality between the continuous Littlewood--Paley projections and martingale differences.

\subsection{Proof of Theorem \ref{thm:second}}\label{sec:proofthmscnd} We recall that we work on the probability space $([0,1],\d x)$. It clearly suffices to prove the theorem for $k\geq 1$ as for $k=0$ we can set $f_0\coloneqq \mathbf E_0 f=\mathbf D_0 f$. Our starting point is the Chang--Wilson--Wolff inequality of general order of integrability, \eqref{eq:CWWgen}. This is pretty standard but a quick proof can be produced by using the usual Chang--Wilson--Wolff inequality \eqref{eq:CWW} in the form
\[
p^{-\sigma/2}\| f-\mathbf E_0 f\|_p \lesssim p^{-\sigma/2} p^{1/2} \|S_{\mathcal M}f\|_p,\qquad p\geq 2,\quad \sigma\geq 0,
\]
which readily implies
\[
\|f-\mathbf E_0 f\|_{\exp(L^{2/(\sigma+1)})}\simeq\sup_{p\geq 2} \frac{\|f-\mathbf E_0 f\|_p}{p^{(\sigma+1)/2}}\lesssim \sup_{p\geq 2} \frac{\|S_{\mathcal M}f\|_p}{p^{\sigma/2}}\simeq \|S_{\mathcal M}f\|_{\exp(L^{2/\sigma})}
\]
which is \eqref{eq:CWWgen}. Observe that \eqref{eq:CWWgen} has the form
\begin{equation}\label{eq:interdual}
\left\| \sum_{k\geq 1}  g_k\right \|_{\exp(L^{2/(\sigma+1)})} \lesssim \left\| \left(\sum_{k\geq 1}|g_k|^2\right)^{1/2} \right\|_{\exp(L^{2/\sigma})},\qquad g_k =\mathbf D_k f.
\end{equation}
We will write \eqref{eq:interdual} as a continuity property for the operator $\mathrm{T}(\{g_k\}_k)\coloneqq \sum_k g_k$ between suitable Banach spaces. To that end let us consider the subspace of $L\log^{\sigma/2}L([0,1];\ell^2)$ given by 
\[
 Y \coloneqq \left\{ \{\psi_k\}_k \in L\log^{\sigma/2}L([0,1];\ell^2):\, \mathbf{D}_k \psi_k = 0 \, \,\text{ for all } \,\, k\in\N \right\}. 
\]
We observe that $Y$ is closed. To see this consider a sequence $(\psi^n)_n\subset Y$ with $\psi^n= \{\psi_k ^n \}_k$ converging to some $\psi=\{\psi_k\}_k$ in $L\log L^{\sigma/2}L([0,1];\ell^2)$. Clearly the limit $\psi$ belongs to the space $L\log L^{\sigma/2}L([0,1];\ell^2)$, the latter being a Banach space and, additionally, $\psi_k^n$ converges to $\psi_k$ in $L\log ^{\sigma/2}L([0,1])$ and so also in $L^1([0,1])$, uniformly in $k$. Now it follows by Fatou's lemma that for each $k\in\N$ there holds
\[
\left\|\liminf_{n\to \infty} \left|\mathbf{D}_k(\psi_k ^n -\psi_k)\right|\right\|_{L^1([0,1])}\leq\liminf_{n\to \infty} \left\| \psi_k - \psi_k ^n\right\|_{L^1([0,1])}=0
\]
yielding $\mathbf{D}_k \psi_k ^n = \mathbf{D}_k\psi_k=0$ a.e., where we also used the uniform boundedness of $\mathbf{D}_k$ on $L^1([0,1])$.

Since $\big(L\log^{\sigma/2} L([0,1];\ell^2)\big)^{\ast} \cong \exp(L^{2/\sigma})([0,1];\ell^2)$, the annihilator of $Y$ is given equivalently by 
\[ 
Y^{\perp} = \left\{ \{g_k\}_k \in \exp(L^{2/\sigma})([0,1];\ell^2) :\, \int \left(\sum_k g_k \psi_k \right) =0\,\, \text{ for all } \,\, \{\psi_k\}_k \in Y \right\}.
\]
Since $Y$ is a closed subspace of $L\log^{\sigma/2}L([0,1];\ell^2)$ we have that $(L\log^{\sigma/2}L([0,1];\ell^2)/Y)^\ast$ is isometrically isomorphic to $Y^\perp$; see \cite[{Theorem 4.9}]{Ru}.  We equip $Y^\perp$ with the norm appearing on the right hand side of \eqref{eq:interdual}. We will use the following fact.

\begin{lemma}\label{lem:g_k_projection}
If $\{g_k\}_k \in Y^{\perp}$ then  $\mathbf{D}_k g_k = g_k$ for every $k\in\N$.
\end{lemma}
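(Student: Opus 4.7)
The plan is to reduce the lemma to a one--component test, and then to a pointwise identity via the self-adjointness of the martingale difference operator. The key point is that each $\mathbf{D}_k$ is a self-adjoint projection, so $\ker\mathbf{D}_k$ and $\operatorname{range}\mathbf{D}_k$ are complementary in any reasonable duality pairing.

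First I would fix $j\in\mathbb{N}$ and concentrate all of the test mass into the $j$-th slot: for any $\psi \in L\log^{\sigma/2}L([0,1])$ with $\mathbf{D}_j \psi = 0$, the sequence $\{\psi_k\}_k$ defined by $\psi_j = \psi$ and $\psi_k = 0$ for $k\neq j$ has $\ell^2$-norm equal to $|\psi|$, hence lies in $L\log^{\sigma/2}L([0,1];\ell^2)$, and it clearly belongs to $Y$ (the only non-trivial component annihilates $\mathbf{D}_j$ by assumption). Feeding such $\{\psi_k\}$ into the annihilator condition collapses $\sum_k \int g_k\psi_k$ to $\int g_j\psi$, so we obtain
\begin{equation}
\int_{[0,1]} g_j \psi =0 \quad\text{for every } \psi\in L\log^{\sigma/2}L([0,1]) \text{ with } \mathbf{D}_j\psi=0.
\end{equation}

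Second, since $\mathbf{D}_j = \mathbf{E}_j - \mathbf{E}_{j-1}$ is a self-adjoint idempotent on $L^2$, for any $f\in L^\infty([0,1])$ the function $\psi \coloneqq (\mathrm{id}-\mathbf{D}_j)f$ is bounded and satisfies $\mathbf{D}_j\psi=\mathbf{D}_j f - \mathbf{D}_j^2 f = 0$. In particular $\psi\in L^\infty\subset L\log^{\sigma/2}L([0,1])$, so the displayed orthogonality applies and yields
\begin{equation}
\int_{[0,1]} g_j\,(\mathrm{id}-\mathbf{D}_j)f = 0 \quad\text{for every } f\in L^\infty([0,1]).
\end{equation}
Now I would move $\mathbf{D}_j$ onto $g_j$ by self-adjointness: since $g_j\in\exp(L^{2/\sigma})\subset L^1$ and $\mathbf{D}_j$ is a bounded operator on $L^1$, one has $\int g_j\,\mathbf{D}_j f = \int (\mathbf{D}_j g_j)\,f$ for any $f\in L^\infty$ by approximating $g_j$ by bounded functions in $L^1$-norm and using that $\mathbf{D}_j f\in L^\infty$ together with the $L^2$ self-adjointness. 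This gives $\int (g_j - \mathbf{D}_j g_j)\,f = 0$ for every $f\in L^\infty([0,1])$, and consequently $g_j = \mathbf{D}_j g_j$ almost everywhere, as required.

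The only genuinely subtle step is the last one, namely justifying the self-adjoint transfer across non-reflexive Orlicz spaces; but this is standard once one truncates $g_j$ to obtain $L^2$ approximants and observes that both $\mathbf{D}_j$ and the pairing behave well under $L^1$-limits when tested against $L^\infty$ functions, so no delicate duality issue actually arises.
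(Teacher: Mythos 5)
Your proof is correct and takes essentially the same route as the paper: both use single-slot test sequences in $Y$ and self-adjointness of $\mathbf{D}_k$ to deduce $\mathbf{D}_k g_k = g_k$. The paper's version is slightly more streamlined — it directly plugs in $\psi_{k_0} \coloneqq \psi - \mathbf{D}_{k_0}\psi$ for arbitrary $\psi \in L\log^{\sigma/2}L$ (which lies in $\ker\mathbf{D}_{k_0}$ since $\mathbf{D}_{k_0}$ is idempotent), so the orthogonality condition immediately yields $\int (g_{k_0} - \mathbf{D}_{k_0}g_{k_0})\psi = 0$ for all $\psi$ in the predual, with no need to pass through a separate $L^\infty$ testing step and an approximation argument. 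Your extra care about transferring $\mathbf{D}_j$ across the pairing is not wrong, but it is more than is needed: $\mathbf{D}_j$ is a finite-rank operator (projection onto the span of Haar functions at a single scale), so the self-adjointness identity $\int g\,\mathbf{D}_j\psi = \int (\mathbf{D}_j g)\,\psi$ holds directly for $g \in \exp(L^{2/\sigma}) \subset L^1$ and $\psi \in L\log^{\sigma/2}L$ by inspection of the Haar coefficients, without any truncation or limiting argument.
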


\begin{proof}
Fix an index $k_0\in\N$, let $\psi \in L\log^{\sigma/2}L$ be arbitrary and let $\{\psi_k\}_k$ be defined by 
\[ 
\psi_k \coloneqq \begin{cases} 
0 & \text{if } k \neq k_0, 
\\
\psi - \mathbf{D}_{k_0} \psi & \text{otherwise}.
\end{cases} 
\]
Clearly $\{\psi_k\}_k \in L\log^{\sigma/2}L([0,1];\ell^2)$ and, moreover, $ \mathbf{D}_{k_0}(\psi - \mathbf{D}_{k_0} \psi) = 0$ so that $\{\psi_k\}_k \in Y$. By the definition of $Y^{\perp}$ we have then
\[ 
0 = \int \sum_{k} g_k \psi_k  = \int g_{k_0} (\psi - \mathbf{D}_{k_0} \psi)  = \int (g_{k_0} - \mathbf{D}_{k_0} g_{k_0}) \psi
\]
where we have used the fact that $\mathbf{D}_k$ is self-adjoint; but this is only possible for arbitrary  $\psi$  if $\mathbf{D}_{k_0} g_{k_0} = g_{k_0}$, as claimed.
\end{proof}

Now \eqref{eq:interdual} can be written in the form
\begin{equation}\label{eq:Tgk}
\left \|\mathrm T(\{g_k\}_k)\right\|_{\exp(L^{2/(\sigma+1)})}\lesssim \left\| \{g_k\}_k\right\|_{Y^\perp},\qquad \mathrm T(\{g_k\}_k)\coloneqq \sum_k g_k.
\end{equation}
Let $X\coloneqq L\log^{\sigma/2}L([0,1];\ell^2)$ and denote by $X_N, Y_N$ the functions in $X,Y$, respectively, which are constant on dyadic intervals of length smaller than $2^{-N}$. In particular, such functions $f$ have finite Haar expansion which implies the apriori qualitative property that the spaces $X_N,Y_N$ are finite dimensional. We note that $(X_N/Y_N)^\ast$ is isometrically isomorphic to $Y_N ^\perp$. By the Riesz representation theorem we then get that
\[
\begin{split}
\|\{\mathbf D_k f\}_k\|_{X_N/Y_N} &\leq \sup_{\substack{\{g_k\}_k \in Y_N ^{\perp} :\\ \|\{g_k\}_k\|_{Y_N ^\perp}\leq 1}} \left| \int \sum_{k} g_k \mathbf{D}_k f  \right|
=\sup_{\substack{\{g_k\}_k \in Y_N ^{\perp} :\\ \|\{g_k\}_k\|_{Y_N ^\perp}\leq 1}} \left| \int \mathrm T(\{g_k\}_k)f \right|,
\end{split}
\]
where we also used Lemma~\ref{lem:g_k_projection} in passing to the equality in the right hand side above. Using \eqref{eq:Tgk} together with H\"older's inequality in Orlicz spaces it follows that
\[
\|\{\mathbf D_k f_N\}_k\|_{X/Y}=\inf_{\{\psi_k\}_k \in Y} \left\| \left(\sum_{k} |\mathbf{D}_k f_N + \psi_k|^2\right)^{1/2} 
\right\|_{L\log^{\sigma/2}L}\lesssim \|f_N\|_{L\log^{(\sigma+1)/2}L},
\]
where $f_N$ is the truncation of the Haar series of $f\in X$ at scale $2^{-N}$. We stress that the approximate inequality above holds uniformly for all $N\in\N$. This inequality extends to all $f \in L\log^{(\sigma+1)/2}L$ by a standard approximation argument, using the fact that the truncated Haar series of functions $f \in L\log^{(\sigma+1)/2}L$ converge to $f$ in $L\log^{(\sigma+1)/2}L$; see \cite{OSW83}. The extension of the operator $f \mapsto \{\mathbf D_k f\}_k$ is the obvious one given by the same expression. 

In order to conclude the proof of the theorem we notice that for every $\{\psi_k\}_k\in Y$ there holds $\mathbf D_k f=\mathbf D_k (\mathbf D_k f +\psi_k)$ and the last inequality guarantees the existence of a vector $\{\psi_k\}_k\in Y$ such that the functions $f_k\coloneqq \mathbf D_k f +\psi_k$ satisfy the conclusion of the theorem.

\subsection{Proof of Corollary~\ref{cor:SFCtau}} The corollary follows from the dyadic case of Theorem~\ref{thm:second} via an averaging argument which is essentially identical to the one in \cite[{\S9}]{TW}; {see also \cite{MarcoV} where the argument of \cite[{\S9}]{TW} is explained in detail.
 We can clearly assume that $\gamma\geq 3$ and by affine invariance we can take $\gamma J=[0,1]$, so that $|J|=\gamma^{-1}$, and $\supp f\subset[1/3,2/3]$.

For all $\theta\in [-1/3,1/3]$ we define $f_\theta(x):=f(x-\theta)$. By Theorem~\ref{thm:second}, for each $\theta\in [-1/3,1/3]$ and $k\geq 0$ there exists a function $f_{\theta,k}$ such that $\mathbf{D}_k f_\theta=\mathbf{D}_k f_{\theta,k}$ and
\begin{align}\label{proof_generalization_4.1_square_function_fktheta}
\left\| \left( \sum_{k\geq 0} |f_{\theta,k}|^2 \right)^{1/2} \right\|_{L \log^{\sigma/2}L}\lesssim \|f_\theta\|_{L\log^{(\sigma+1)/2}L}= \|f\|_{L \log^{(\sigma+1)/2} L}.
\end{align}
Setting for $L\in\Lambda_1 ^\gamma$ 
\[
F_L(x)\coloneqq \sum_{k\in \mathbb{N}_0} 2^{-|\log_2|L|-k|/2}\int_{[-1/3,1/3]}
|f_{\theta,k}(x+\theta)| \,\d\theta
\]
and arguing as in \cite{TW} we see that $|\Delta_L f(x)|\lesssim F_L\ast \varphi_{|L|^{-1}}$ and
\[
\bigg( \sum_{L\in\Lambda_1 ^\gamma} |F_L(x)|^2 \bigg)^{1/2} \lesssim
\int_{[-1/3,1/3]}\bigg( \sum_{k\geq 0} |f_{\theta,k} (x+\theta)|^2 \bigg)^{1/2} \,\d\theta.
\]
By the Minkowski integral inequality for the space $L\log^{\sigma/2}L$ we have
\[    
\left\|   \left( \sum_{L\in\Lambda_1 ^\gamma} |F_L |^2 \right)^{1/2} \right\|_{L\log^{\sigma/2}L} \lesssim \int_{[-1/3,1/3]} \left\| \left( \sum_{k\geq 0} |f_{\theta,k} ( \cdot+\theta)|^2 \right)^{1/2}\right\|_{L\log^{\sigma/2}L}  \, \d\theta 
\]
and the proof follows for $L\in\Lambda_1^\gamma$. Under the additional cancellation assumption $\int_{[0,1]} f=0$ we consider also $L\in\Lambda_1$ with $|L|<\gamma$, and for these we define $F_L\coloneqq |\Delta_L f|$ and note that $|\Delta_L f|\lesssim \varphi_{|L|^{-1}} * F_L$. Using the cancellation condition we have also
\[
|\Delta_L f| \lesssim  \varphi_{|L|^{-1}} *(|L|\|f\|_{L^1}\ind_{[0,1]}),
\]
which readily yields the estimate $\left\|\|\{F_L\}_{|L|<\gamma}\|_{\ell^2}\right\|_{L^1}\lesssim \|f\|_{L^1}$ and the proof is complete. Note that we used that since $\tau=1$ there are at most two intervals $L\in\Lambda_1$ of any given length.

\section{Generalized Zygmund--Bonami inequalities} \label{sec:genzyg} In this section we prove  the versions  of the generalized Zygmund--Bonami inequality presented in the introduction,  where the Littlewood--Paley projections $\Delta_L$ for $L\in\Lambda_1$ are replaced by their $\tau$-order counterparts $\Delta_{L}$ for $L\in \Lambda_\tau$, where $\tau$ is an arbitrary positive integer. As already discussed, the estimate corresponding to order $\tau=1$ is \eqref{eq:genzyg} and it follows rather easily from the case $\sigma=0$ of Corollary~\ref{cor:SFCtau}. For $\tau>1$ we first state the generalized Zygmund-Bonami inequalities in the case of $L\in \Lambda_\tau$ with $|L|\geq |J|^{-1}$, where $J$ is an interval in which $f$ is supported; this is the harder and deeper case. In the rest of the section we will also provide the statements and proofs for the easier case $|L| < |J|^{-1}$; the latter will rely on pointwise estimates for $\Delta_{L}f$ and recursive arguments, assuming suitable cancellation conditions for $f$ in the same spirit as Corollary~\ref{cor:SFCtau}.

\subsection{The main term in the generalized Zygmund--Bonami inequalities} We encourage the reader to keep in mind the notation of \S\ref{sec:not} for the local Orlicz norms and the definitions concerning lacunary sets from \S\ref{sec:lacnot} for the rest of this section. Our first result below gives a version of the generalized Zygmund--Bonami inequality in which the intervals $L$ are restricted to those for which $|L|\geq |J|^{-1}$, as anticipated above.

\begin{proposition}\label{prop:genzygpos}Let $J\subset \R$ be a finite interval and $f$ be a compactly supported function with $\mathrm{supp}(f)\subseteq J$. Let $\tau$ be a positive integer, $\sigma\geq 0$ 
and $\gamma > 1$. There holds
\[
\left( \sum_{L\in \Lambda_\tau ^{|J|^{-1}}}  \left\langle|\Delta_{L} f |\right 
\rangle_{B_{\sigma/2},\gamma J} ^2 \right)^{1/2}
\lesssim_{\sigma,\tau,\gamma} \l |f|\r _{B_{(\sigma+\tau)/2}, J}   \]
and
\[ \sum_{L\in\Lambda_\tau ^{|J|^{-1}}} \left\| \Delta_{L}f\right\|_{L^2(\R\setminus \gamma J)} ^2 \lesssim_{\tau,\gamma} |J| \langle |f|\rangle_{B_{(\tau-1)/2},J} ^2.
\]
\end{proposition}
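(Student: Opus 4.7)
The strategy is to prove both inequalities simultaneously by induction on $\tau\in\N$, with the central engine being the recursive identity
\[
\Delta_L f = e^{2\pi i\lambda(L)\bigcdot}\Delta_{L^*}\bigl(e^{-2\pi i\lambda(L)\bigcdot}\Delta_{\widehat L}f\bigr),\qquad L^*= L-\lambda(L)\in\Lambda_1,
\]
from \S\ref{sec:lacnot}, which converts any sum over Whitney children $L$ of a fixed parent $\widehat L\in\Lambda_{\tau-1}$ into a first-order sum against the modulated functions $g_\pm \coloneqq e^{-2\pi i\lambda_\pm\bigcdot}\Delta_{\widehat L}f$, with $\lambda_\pm$ the endpoints of $\widehat L$.

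For the base case $\tau=1$, the first inequality follows from Corollary~\ref{cor:SFCtau} applied with parameter $\sigma$: this supplies nonnegative functions $\{F_L\}_{L\in\Lambda_1^{|J|^{-1}}}$, supported in $5J$, such that $|\Delta_L f|\lesssim F_L*\varphi_{(|L||J|)^{-1}}$ and with the square function of the $F_L$ controlled in $L\log^{\sigma/2}L(\gamma J,\d x/|J|)$ by $\|f\|_{L\log^{(\sigma+1)/2}L(J,\d x/|J|)}$. Convolution with $\varphi_{(|L||J|)^{-1}}$, dominated by the Hardy--Littlewood maximal function, is bounded on local Orlicz averages up to a mild enlargement of $\gamma J$, and Minkowski's integral inequality for $L\log^{\sigma/2}L$ permits exchanging the $\ell^2$-sum with the Orlicz norm. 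The base case of the second inequality is more elementary: the rapid decay $|\check\phi_L(u)|\lesssim_M |L|(1+|L||u|)^{-M}$ of the bump together with $|x-y|\geq d(x,J)\gtrsim|J|$ for $x\in\R\setminus\gamma J$, $y\in J$, yields $|\Delta_L f(x)|\lesssim\|f\|_{L^1}|L|(1+|L|d(x,J))^{-M}$. Squaring, integrating, and using that $\Lambda_1$ contains only two intervals of each dyadic length reduces the sum to a convergent geometric series in $(|L||J|)^{-1}$.

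For the inductive step ($\tau\geq 2$), assuming both inequalities at level $\tau-1$, group $L\in\Lambda_\tau^{|J|^{-1}}$ by parent $\widehat L\in\Lambda_{\tau-1}^{2|J|^{-1}}$ and invoke the recursive identity, so that the inner sum reduces to a first-order sum against $g_\pm$. Since $g_\pm$ is not supported in $J$, choose an intermediate enlargement $1<\gamma'<\gamma$ and split $g_\pm = g_\pm \ind_{\gamma'J} + g_\pm \ind_{\R\setminus\gamma'J}$. For the localized piece, apply the base case with enclosing interval $\gamma'J$ and enlargement parameter $\gamma/\gamma'>1$, yielding local control by $\langle|\Delta_{\widehat L}f|\rangle_{B_{(\sigma+1)/2},\gamma'J}^2$ for the first inequality and by $|J|\langle|\Delta_{\widehat L}f|\rangle_{1,\gamma'J}^2$ for the second. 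For the tail piece, use first-order Plancherel orthogonality of the $\Delta_{L^*}$, preceded in the first inequality by the embedding $\langle|h|\rangle_{B_{\sigma/2},\gamma J}\lesssim \langle|h|\rangle_{2,\gamma J}$, to reduce the tail contribution to a multiple of $\|\Delta_{\widehat L}f\|_{L^2(\R\setminus\gamma'J)}^2$.

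Finally, summing over $\widehat L$ closes the induction: the induction hypothesis for the first inequality at $\tau-1$ with shifted parameter $\sigma+1$ handles the local contribution, yielding $\sum_{\widehat L}\langle|\Delta_{\widehat L}f|\rangle^2_{B_{(\sigma+1)/2},\gamma'J}\lesssim\langle|f|\rangle^2_{B_{(\sigma+\tau)/2},J}$; the analogous claim for the second inequality uses the first at $\tau-1$ with $\sigma=0$. The tail contribution is closed by the induction hypothesis for the second inequality at $\tau-1$, giving $\sum_{\widehat L}\|\Delta_{\widehat L}f\|^2_{L^2(\R\setminus\gamma'J)}\lesssim |J|\langle|f|\rangle^2_{B_{(\tau-2)/2},J}$, which is absorbed into the target since $B_{(\tau-2)/2}\leq \min(B_{(\sigma+\tau)/2},B_{(\tau-1)/2})$. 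The main obstacle is the failure of $g_\pm$ to be compactly supported, which forces the simultaneous induction on both inequalities and the careful choice of the intermediate enlargement $\gamma'$ so that the base case applies to the localized piece while Plancherel orthogonality suffices for the tail.
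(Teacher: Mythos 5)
Your proof is correct and mirrors the paper's argument essentially line by line: the same simultaneous induction on both inequalities, the same base case from Corollary~\ref{cor:SFCtau} and pointwise decay, the same split of $e^{-2\pi i\lambda(L)\bigcdot}\Delta_{\widehat L}f$ into a piece on an intermediate enlargement (your $\gamma'J$ is the paper's $a^{\tau-1}J$) and a tail, and the same closing via the inductive hypotheses with the shifted Orlicz parameter. The only cosmetic difference is that the paper parametrizes the enlargements geometrically by fixing $a$ with $\gamma=a^\tau$, whereas you use a generic $\gamma'\in(1,\gamma)$.
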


\begin{proof} The proof is by way of induction on $\tau$, with the base case $\tau=1$ being an easy consequence of Corollary~\ref{cor:SFCtau}, as we shall now illustrate. Indeed, let $C_1(\sigma,\tau,\gamma)$ and $C_2(\tau,\gamma)$ denote the best constants in the first, and the second estimate in the statement, respectively. Corollary~\ref{cor:SFCtau} implies that for $L\in\Lambda_1 ^{|J|^{-1}}$ we have
\[
\langle |\Delta_L f| \rangle _{B_{\sigma/2},\gamma J}\lesssim \langle |\varphi_{(|L||J|)^{-1}}*F_L| \rangle _{B_{\sigma/2},\gamma J}\lesssim \langle |F_L|\rangle _{B_{\sigma/2}, \gamma J},
\]
using Young's convolution inequality and the $L^1$-normalization of each $\varphi_{(|L||J|)^{-1}}$. Now the proof of the first estimate in the conclusion for $\tau=1$ can be concluded by yet another application of Minkowski's inequality, this time to yield that the left hand side of the first estimate in the conclusion is bounded by a constant multiple of
\[
\left(\sum_{|L|\geq |J|^{-1}}  \l |F_L| \r  _{B_{\sigma/2},\gamma J} ^2\right)^{1/2}\lesssim \left\langle \left( \sum_{|L|\geq |J|^{-1}}|F_L|^2\right)^{1/2}\right\rangle _{B_{\sigma/2},\gamma J}\lesssim \l |f|\r _{B_{(\sigma+1)/2},J} 
\]
by the estimate for the square function of the $\{F_L\}_{L}$ in Corollary~\ref{cor:SFCtau}. Thus $C_1(\sigma,1,\gamma)<+\infty$ for all nonnegative integers $\sigma$ and $\gamma >1 $.

For the second estimate we have for $|L|\geq |J|^{-1}$ and $x\in \R\setminus \gamma J$
\begin{equation}\label{eq:delta}
|\Delta_Lf(x)| \lesssim  |L| |J| (1+|L||x-c_J|)^{-10}\l|f|\r_{1,J} \lesssim \omega_{|L|^{-1}}*(\l|f|\r_{1,J}\ind_J)(x),
\end{equation}
with $\omega_{|L|^{-1}}$ as given in \S\ref{sec:not}. Using the first approximate inequality above, the square of the left hand side of the second estimate in the conclusion of the proposition can be estimated by a constant multiple of 
\[
\sum_{L\in\Lambda_1 ^{|J|^{-1}}} \langle |f|\rangle_{1,J} ^2 (|L||J|)^2|L|^{-20} \int_{\R\setminus \gamma J}|x-c_J|^{-20}\,\d x\lesssim \langle |f|\rangle_{1,J} ^2 \sum_{L\in\Lambda_1:\, |L||J|\geq 1} (|L||J|)^{-18}  |J|
\]
which sums to the desired quantity since, for $\tau=1$, there is exactly one interval $L\in\Lambda_1$ per dyadic scale. This shows that $C_2(1,\gamma)<+\infty$ for all $\gamma>1$.

Consider now the case $\tau>1$ and let $a>1$ be such that $\gamma = a^\tau$. Recalling the discussion in \S\ref{sec:lacnot} we write
\begin{equation}\label{eq:split}
\begin{split}
|\Delta_{L}f|&\leq \left|\Delta_{L^*}\left(\ind_{a^{\tau-1} J} e^{-2\pi i \lambda(L)\bigcdot}\Delta_{\widehat{L}} f\right)\right|+\left|\Delta_{L^*}\left(\ind_{\R\setminus a^{\tau-1} J} e^{-2\pi i \lambda(L)\bigcdot}\Delta_{\widehat{L}} f\right)\right|
\\
&\eqqcolon |\Delta_{L^*} f_{1,L}|+|\Delta_{L^*} f_{2,L}|.
\end{split}
\end{equation}
For clarity, we remind the reader that $\lambda(L)$ is either of the endpoints of $\widehat{L}$ (depending on the position of $L$) and therefore we can partition the intervals $L$ into two families such that $f_{1,L}, f_{2,L}$ \emph{actually depend only on} $\widehat{L}$ -- this will be relevant below. Fixing for a moment $L'\in\Lambda_{\tau-1} ^{|J|^{-1}}$ we note that for any $L\in \Lambda_\tau ^{|J|^{-1}}(L')$ there holds $\widehat{L}=L'$ and so $|f_{1,L}| = |\Delta_{\widehat{L}} f| \ind_{a^{\tau-1} J}= |(\Delta_{L'} f ) \ind_{a^{\tau-1}J}|$. As the collection $\{L^*:\, L\in \Lambda_{\tau-1} ^{|J|^{-1}}(L')\}\subset \Lambda_1 ^{|J|^{-1}}$ we can use the conclusion of proposition for $\tau=1$ to estimate for fixed $L'\in \Lambda_{\tau-1} ^{|J|^{-1}}$
\[
\begin{split}
 \left(  \sum_{L\in \Lambda_\tau ^{|J|^{-1}}  (L')}  \left\langle|\Delta_{L^*} f_{1,L} |\right 
\rangle_{B_{\sigma/2},a^{\tau} J} ^2 \right)^{1/2} &\leq C_1(\sigma,1,a)  \left\langle| \Delta_{L'}f|\right\rangle_{B_{(\sigma+1)/2},a^{\tau-1}J}  .
\end{split}
\]
Thus we can recursively estimate

\[
\begin{split}
&  \left( \sum_{L' \in \Lambda_{\tau-1} ^{|J|^{-1}}} \sum_{L\in \Lambda_\tau ^{|J|^{-1}}  (L')}  \left\langle|\Delta_{L^*} f_{1,L} |\right 
\rangle_{B_{\sigma/2},a^{\tau} J} ^2 \right)^{1/2}
\\
&\quad\quad \leq C_1(\sigma,1,a) \left(\sum_{L'\in \Lambda_{\tau-1} ^{|J|^{-1}}} \left\langle| \Delta_{L'}f|\right\rangle_{B_{(\sigma+1)/2},a^{\tau-1}J} ^2\right)^{\frac12}
\\
&\quad\quad\leq C_1(\sigma,1,a) C_1  (\sigma+1,\tau-1,a^{\tau-1}) \langle |f|\rangle_{B_{(\tau+\sigma)/2},J} 
\end{split}
\]
which takes care of the contribution of the $f_{1,L}$'s. Considering now the $f_{2,L}$'s, we have by H\"{o}lder's inequality for Orlicz spaces that $\left\langle|\Delta_{L^*} f_{2,L} |\right 
\rangle_{B_{\sigma/2},a^{\tau} J} \leq \left\langle|\Delta_{L^*} f_{2,L} |\right 
\rangle_{2,a^{\tau} J}$ and therefore for any fixed $L' \in \Lambda_{\tau-1} ^{|J|^{-1}}$
\[  \sum_{L\in \Lambda_\tau ^{|J|^{-1}}  (L')}  \left\langle|\Delta_{L^*} f_{2,L} |\right 
\rangle_{B_{\sigma/2},a^{\tau} J} ^2 \lesssim  \frac{1}{a^{\tau}|J|} \int_{\R} |f_{2,L}|^2 = \frac{1}{a^{\tau}|J|} \int_{\R\setminus a^{\tau-1}J} |\Delta_{L'}f|^2,\]
where we have used the $L^2 \to L^2$ boundedness of the smooth Littlewood--Paley square function together with the fact remarked above that $f_{2,L}$ depends essentially only on $\widehat{L} = L'$. It follows that we can bound recursively
\[
\begin{split}
 \left( \sum_{L'\in \Lambda_{\tau-1} ^{|J|^{-1}}} \sum_{\substack{L\in\Lambda_\tau (L')\\|L|\geq |J|^{-1}}}  \left\langle|\Delta_{L^*} f_{2,L} |\right\rangle_{B_{\sigma/2},a^{\tau} J} ^2 \right)^{1/2} &\lesssim_\sigma \left(\frac{1}{a^{\tau}|J|} \sum_{L'\in \Lambda_{\tau-1} ^{|J|^{-1}}}  \int_{\R\setminus a^{\tau-1}J} |\Delta_{L'}f|^2\right)^{1/2}
\\
&\leq \left( \frac{C_2(\tau-1,a^{\tau-1})}{a^{\tau}|J|}|J|\langle |f|\rangle_{B_{(\tau-2)/2},J}^2\right)^{1/2}\\
&\lesssim_{\sigma,\tau} (C_2(\tau-1,a^{\tau-1})/a^{\tau})^{1/2} \langle |f|\rangle_{B_{(\sigma+\tau)/2},J} .
\end{split}
\]
 This proves that 
\[
C_1(\sigma,\tau,a^\tau)\leq  C_1(\sigma,1,a)C_1(\sigma+1,\tau-1,a^{\tau-1})+ c_{\sigma,\tau} C_2(\tau-1,a^{\tau-1})^{1/2} a^{-\tau/2}
\]
for some numerical constant $c_{\sigma,\tau}$ depending only on $\sigma,\tau$.

We move to the proof of the inductive step for the $L^2$-estimate and we use again the splitting of \eqref{eq:split}. For the term corresponding to the $f_{1,L}$'s we can estimate again recursively 
\[
\begin{split}
 & \sum_{L'\in\Lambda_{\tau-1} ^{|J|^{-1}}}\sum_{ L\in\Lambda_{\tau} ^{|J|^{-1}}(L')} \left\|\Delta_{L^*}f_{1,L}\right\|_{L^2(\R\setminus a^{\tau}J)}^2  \leq C_2(1,a)|a^{\tau-1} J| \sum_{L'\in\Lambda_{\tau-1} ^{|J|^{-1}}} \left\langle |\Delta_{ L'}f|\right\rangle_{1,a^{\tau-1} J} ^2
\\
& \quad\quad \leq C_2(1,a)  C_1(0,\tau-1 ,a^{\tau-1})a^{\tau-1}|J| \left\langle|f|\right\rangle_{B_{(\tau-1)/2,J}} ^2.
  \end{split}
\]
Finally, for the contribution of the $f_{2,L}$'s we use again the $L^2 \to L^2$ boundedness of the smooth Littlewood--Paley square function and the inductive hypothesis to estimate
\[
\begin{split} \sum_{L'\in\Lambda_{\tau-1}^{|J|^{-1}}}\sum_{L\in\Lambda_{\tau} ^{|J|^{-1}}(L')} \left\|\Delta_{L^*}f_{2,L}\right\|_{L^2(\R\setminus a^{\tau}J)}^2 & \lesssim  \sum_{L'\in\Lambda_{\tau-1} ^{|J|^{-1}}} \left\|\Delta_{L'}f\right\|_{L^2(\R\setminus a^{\tau-1} J)} ^2 \\
& \leq   C_2( \tau-1,a^{\tau-1} )|J| \left\langle|f|\right\rangle_{B_{(\tau-1)/2,J}} ^2 .
 \end{split}
\]
We have thus shown that for some numerical constant $c'_{\sigma,\tau}$
\[
C_2(\tau,a^\tau)\leq C_2(1,a)C_1(0,\tau-1,a^{\tau-1})a^{\tau-1} + c'_{\sigma,\tau} C_2(\tau-1,a^{\tau-1}).
\]
This completes the proof of the inductive step and with that the proof of the proposition.
\end{proof}

\begin{remark}\label{rmrk:zygbonami} The first estimate in Proposition~\ref{prop:genzygpos} implies the Zygmund--Bonami inequality of general order. Indeed assume for a moment that $J=[0,1]$ and for $\lambda \in \mathrm{lac}_\tau ^1$ let $L_\lambda\in\Lambda_\tau$ be an interval that has $\lambda$ as an endpoint. We have
\[
\left|\widehat f(\lambda )\right|=\left| \widehat{\Delta_{L_\lambda}(f)}(\lambda)\right|\leq\|\Delta_{L_{\lambda}}(f)\|_{L^1}
\]
for a suitable choice of symbol in the definition of $\Delta_L$ and so the first estimate of the proposition for $\sigma=0$ implies
\[
\left(\sum_{\lambda \in \mathrm{lac}_{\tau} ^1}\left|\widehat{f}(\lambda)\right|^2\right)^{\frac 12}\lesssim \|f\|_{L\log^{\tau/2}L}
\]
which is the Zygmund--Bonami inequality of lacunary order $\tau$. Dualizing and rescaling as in \eqref{eq:rescaling} tells us that for any finite interval $J\subset \R$ with $|J|\in 2^\Z$ we have
\begin{equation}\label{eq:zygbon}
p(y)\coloneqq \sum_{\lambda \in \mathrm{lac}_\tau ^{|J|^{-1}}} a_{\mathbf \lambda} e^{-2\pi i \lambda y},\qquad \|\{a_{\lambda}\}_{\lambda}\|_{\ell^2 }=1 \implies \langle |p|\rangle_{E_{2/\tau},J}\lesssim 1.
\end{equation}
This formulation of the higher order Zygmund--Bonami inequality will be used in several points in the rest of the paper. As it follows from Proposition~\ref{prop:genzygpos} above this makes the proofs in the paper somewhat self contained.
\end{remark}

\begin{remark}\label{rmrk:molecule_pos} One can easily verify that the $L^2$-estimate of Proposition~\ref{prop:genzygpos} can be upgraded to the following form for ``molecules''. Let $\mathcal J$ be a family of pairwise disjoint intervals and $f=\sum_{J\in\mathcal J} b_J$ where $\mathrm{supp} (b_J)\subset J$ for each $J\in\mathcal J$. For every positive integer $\tau$ and $\gamma \geq 2$ there holds
\[
\sum_{L\in \Lambda_\tau} \left\| \sum_{J:\, |J|\geq |L|^{-1}} \Delta_{L} (b_J)\ind_{\R\setminus \gamma J} \right\| _{L^2(\mathbb{R})}^2 \lesssim \sum_{J\in\mathcal{J}} |J| \langle |b_J| \rangle_{B_{(\tau-1)/2},J} ^2.
\]
Indeed an inductive proof is again available. The case $\tau=1$ follows by the same pointwise estimate~\eqref{eq:delta} which implies that
\[
\sum_{J:\, |J|\geq|L|^{-1}}|\Delta_L(b_J)|\ind_{\R\setminus \gamma J}\lesssim  \omega_{|L|^{-1}}* \left( \sum_{|J|\geq |L|^{-1}}\langle |b_J|\rangle_{1,J}\frac{\ind_J}{(|L||J|)^{5}}\right)
\]
which sums using that there are at most two intervals $L\in\Lambda_1$ of any given length.  The inductive step relies again on the identity \eqref{eq:split}, applied to each $b_J$ in place of $f$. Then the contribution of the first term is estimated by an appeal to the case $\tau=1$ followed {by} an application of the first estimate in Proposition~\ref{prop:genzygpos}. The contribution of the second term in \eqref{eq:split} is estimated by the Littlewood--Paley inequalities and the inductive hypothesis. We omit the details.
\end{remark}
We proceed to prove the easier range, corresponding to $|L|<|J|^{-1}$. As in Corollary~\ref{cor:SFCtau} we require cancellation conditions, which in the case at hand amount to vanishing Fourier coefficients of the function at lacunary frequencies corresponding to order $\tau-1$. In this range we can prove the stronger $L^2$ inequality that follows. For simplicity we state the result below for $f$ with support of dyadic length, but it is obvious that this is no real restriction.

\begin{proposition}\label{prop:genzygcanc} Let $J\subset \R$ be a finite interval of dyadic length and $f$ be a compactly supported function with $\mathrm{supp}(f)\subseteq J$. Let $\tau$ be a positive integer. We assume that $\widehat f(\lambda )=0$ for all $\lambda \in\mathrm{lac}_{0} ^{|J|^{-1}}\cup\cdots\cup \mathrm{lac}_{\tau-1} ^{|J|^{-1}}$. Then
\[
 \sum_{\substack{L \in \Lambda_\tau :\\ \, |L|<|J|^{-1}}} \left\| \Delta_{L}f\right\|_{L^2(\R)} ^2 \lesssim |J| \langle |f|\rangle_{B_{(\tau-1)/2},J} ^2.
\]
\end{proposition}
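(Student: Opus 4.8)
The plan is to prove the proposition by induction on $\tau$, after two reductions and one auxiliary estimate. First, translating $f$ changes neither the left-hand side (since $\Delta_L$ commutes with translations), nor the Orlicz norm on the right, nor the cancellation hypothesis (because $\widehat{f(\cdot-c)}(\lambda)=e^{-2\pi ic\lambda}\widehat f(\lambda)$), so I may assume $J$ is centred at the origin; in particular $\int_\R|x||f(x)|\,\d x\leq\tfrac12|J|\,\|f\|_{L^1}$. Second, the workhorse is an elementary Fourier-side estimate for first-order pieces: if $g$ satisfies $|\widehat g(0)|\leq\delta$, then on the support of $\phi_{L^*}$ (an interval of length $\simeq|L^*|$ contained in $\{|\xi|\simeq|L^*|\}$) one has $|\widehat g(\xi)|\leq|\widehat g(\xi)-\widehat g(0)|+\delta\leq 2\pi|\xi|\int_\R|x||g(x)|\,\d x+\delta$, and summing the resulting bound for $\|\Delta_{L^*}g\|_{L^2}^2$ over the (at most two per dyadic length) $L^*\in\Lambda_1$ with $|L^*|<|J|^{-1}$ gives
\[
\sum_{\substack{L^*\in\Lambda_1\\ |L^*|<|J|^{-1}}}\|\Delta_{L^*}g\|_{L^2(\R)}^2\ \lesssim\ |J|^{-3}\Big(\int_\R|x||g(x)|\,\d x\Big)^2+|J|^{-1}\delta^2 .
\]
Applied with $g=f$ and $\delta=0$ this already yields the base case $\tau=1$, since the right-hand side is $\lesssim|J|^{-3}(|J|\|f\|_{L^1})^2=|J|^{-1}\|f\|_{L^1}^2\simeq|J|\langle|f|\rangle_{1,J}^2$ and $B_0(t)=t$.

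For the inductive step I would split $\{L\in\Lambda_\tau:\,|L|<|J|^{-1}\}$ according to whether the lacunary parent satisfies $|\widehat L|<|J|^{-1}$ (``small parents'') or $|\widehat L|\geq|J|^{-1}$ (``large parents''). For a small parent $L'\in\Lambda_{\tau-1}$, \emph{all} Whitney pieces of $L'$ are small, and the factorisation $\Delta_Lf=e^{2\pi i\lambda(L)\bigcdot}\Delta_{L^*}(e^{-2\pi i\lambda(L)\bigcdot}\Delta_{L'}f)$ from \S\ref{sec:lacnot}, together with almost orthogonality of the first-order pieces $\{\Delta_{L^*}\}_{L\in\mathcal W(L')}$, gives $\sum_{L\in\mathcal W(L')}\|\Delta_Lf\|_{L^2}^2\lesssim\|\Delta_{L'}f\|_{L^2}^2$. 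Summing over $L'\in\Lambda_{\tau-1}$ with $|L'|<|J|^{-1}$ and invoking the inductive hypothesis at order $\tau-1$ (whose cancellation requirement is contained in ours) bounds the small-parent contribution by $|J|\langle|f|\rangle_{B_{(\tau-2)/2},J}^2\leq|J|\langle|f|\rangle_{B_{(\tau-1)/2},J}^2$.

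The large parents are where the real work lies. Fix $L'\in\Lambda_{\tau-1}^{|J|^{-1}}$; then every relevant $\lambda(L)$ is an endpoint of $L'$, hence lies in $\mathrm{lac}_{\tau-1}^{|J|^{-1}}$, so the hypothesis forces $\widehat f(\lambda(L))=0$. Grouping the pieces $L$ by the value $\lambda:=\lambda(L)$ and using the same factorisation, $\Delta_Lf=e^{2\pi i\lambda\bigcdot}\Delta_{L^*}(h_{L',\lambda})$ with $h_{L',\lambda}:=e^{-2\pi i\lambda\bigcdot}\Delta_{L'}f$ and $\widehat{h_{L',\lambda}}(0)=\phi_{L'}(\lambda)\widehat f(\lambda)=0$. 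Applying the workhorse estimate to $h_{L',\lambda}$ (with $\delta=0$) and splitting $\int_\R|x||\Delta_{L'}f|$ over $\gamma J$ and $\R\setminus\gamma J$, with Cauchy--Schwarz on the outer part, I expect
\[
\sum_{\substack{L\in\mathcal W(L'),\ \lambda(L)=\lambda\\ |L|<|J|^{-1}}}\|\Delta_Lf\|_{L^2}^2\ \lesssim_\gamma\ |J|\,\langle|\Delta_{L'}f|\rangle_{1,\gamma J}^2\ +\ \int_{\R\setminus\gamma J}\Big(1+\tfrac{|x|}{|J|}\Big)^{4}|\Delta_{L'}f|^2 .
\]
Summing over $L'\in\Lambda_{\tau-1}^{|J|^{-1}}$ and its endpoints, the first term is controlled by the first estimate of Proposition~\ref{prop:genzygpos} at order $\tau-1$ with $\sigma=0$, giving $\lesssim|J|\langle|f|\rangle_{B_{(\tau-1)/2},J}^2$; the second term is controlled by a weighted strengthening of the second estimate of Proposition~\ref{prop:genzygpos} at order $\tau-1$ (the weight $(1+|x|/|J|)^{4}$ on $\R\setminus\gamma J$ being absorbed into the extra decay of the kernel $\omega$ underlying \eqref{eq:delta}, since $|L'|\geq|J|^{-1}$), giving $\lesssim|J|\langle|f|\rangle_{B_{(\tau-2)/2},J}^2$. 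Both are $\lesssim|J|\langle|f|\rangle_{B_{(\tau-1)/2},J}^2$, and combining the two contributions closes the induction.

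The main obstacle is structural and specific to the large-parent case: $\Lambda_{\tau-1}^{|J|^{-1}}$ contains infinitely many intervals sitting near infinitely many lacunary points of order $\tau-1$, so the naive bound $\langle|f|\rangle_{1,J}^2$ for each cluster of pieces would sum to infinity. The point is to phrase every cluster estimate through $\Delta_{L'}f$ rather than through $f$ itself, so that the already available Zygmund--Bonami sum of Proposition~\ref{prop:genzygpos} at order $\tau-1$ performs the summation over parents. The price is that one must control the non-compactly supported tails of $\Delta_{L'}f$; this is the only genuinely technical point and is dealt with by the routine weighted variant of Proposition~\ref{prop:genzygpos} indicated above, whose proof follows the same induction as that of Proposition~\ref{prop:genzygpos}, the weight being harmless by the decay recorded in \eqref{eq:delta}.
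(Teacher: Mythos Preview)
Your overall architecture---induction on $\tau$, with the same small-parent/large-parent split---matches the paper, and your base case and small-parent step are essentially the paper's (you phrase the $\tau=1$ estimate on the Fourier side rather than via the kernel bound \eqref{eq:deltacancel}, but the content is the same). The genuine divergence is in the large-parent case.

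The paper does \emph{not} factor through $\Delta_{L'}f$. Instead, for fixed scale $2^\ell<|J|^{-1}$ and fixed $x$ it linearizes the $\ell^2$-sum over $\lambda\in\mathrm{lac}_{\tau-1}^{|J|^{-1}}$: there exist coefficients $\{a_\lambda(x,\ell)\}$ in the unit ball of $\ell^2$ with
\[
\sum_{\lambda}\bigl|\Delta_{2^\ell}\bigl(e^{-2\pi i\lambda\,\bigcdot}f\bigr)(x)\bigr|^2
=\Bigl|\Delta_{2^\ell}\bigl(p_{x,\ell}\,f\bigr)(x)\Bigr|^2,
\qquad p_{x,\ell}(y)=\sum_{\lambda}a_\lambda e^{-2\pi i\lambda y}.
\]
The point is that $p_{x,\ell}f$ is still supported in $J$ and, by the hypothesis $\widehat f(\lambda)=0$, satisfies $\int_J p_{x,\ell}f=0$; hence the cancellation kernel bound \eqref{eq:deltacancel} applies \emph{directly} to $p_{x,\ell}f$. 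The sum over $\lambda$ is then absorbed, via H\"older, by the dual Zygmund--Bonami inequality $\langle|p_{x,\ell}|\rangle_{E_{2/(\tau-1)},J}\lesssim 1$ of Remark~\ref{rmrk:zygbonami}, giving $\langle|p_{x,\ell}f|\rangle_{1,J}\lesssim\langle|f|\rangle_{B_{(\tau-1)/2},J}$. No tails, no weights, no appeal to Proposition~\ref{prop:genzygpos}.

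Your route instead pushes the sum over parents $L'$ onto the already proven estimates of Proposition~\ref{prop:genzygpos}, at the cost of having to control the non-compactly-supported $\Delta_{L'}f$. That forces the ``weighted strengthening'' of the second estimate there. This is true, but not quite as routine as you suggest: the inductive proof of that $L^2$-estimate uses, for the $f_{2,L}$ term, the (unweighted) almost-orthogonality $\sum_{L^*}\|\Delta_{L^*}g\|_{L^2}^2\lesssim\|g\|_{L^2}^2$, and the weight $(1+|x|/|J|)^4$ is not an $A_2$ weight, so one cannot simply invoke weighted Littlewood--Paley theory. The honest way to get what you need is to observe that the constant $C_2(\tau-1,\gamma)$ in the second estimate of Proposition~\ref{prop:genzygpos} actually decays polynomially in $\gamma$ (with rate governed by the decay exponent $N$ of $\omega$, which can be taken large), and then dyadically decompose $\R\setminus\gamma J$ into annuli $\{|x|\simeq 2^k\gamma|J|\}$ and sum $2^{4k}C_2(\tau-1,2^k\gamma)$. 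This works, but it is an additional lemma you should state and prove, not a throwaway remark. The paper's linearization trick sidesteps this entirely by keeping everything on $J$.
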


\begin{proof}We argue by induction as in the proof of Proposition~\ref{prop:genzygpos}. Let us denote by $C(\tau)$ the best constant in the inequality we intend to prove. Note that for $\tau=1$ the assumption reads $\int_Jf=0$ and the conclusion $C(1)<+\infty$ follows immediately by the pointwise estimate 
\begin{equation}\label{eq:deltacancel}
|\Delta_L(f)(x)|\lesssim_{M} |L|^2|J|^2 (1+|L||x-c_J|)^{-M}\langle|f|\rangle_{1,J}\lesssim \omega_{|L|^{-1}}*(|L||J|\langle|f|\rangle_{1,J}\ind_J)
\end{equation}
for any large positive integer {$M$}, where $c_J$ is the center of $J$ and $|L||J|<1$. In order to see the first estimate above let us take $\phi_{L}\in\Phi_{L,M}$ to be the symbol of $\Delta_L$ which can be written in the form $\phi_L(x)=e^{i2\pi i c_Lx}|L|\phi(|L|x)$, with $c_L$ denoting the center of $L\in\Lambda_1$. We compute using the cancellation of $f$ and the mean value theorem
\[
\left|\Delta_L(f)(x)\right|\leq \int_J \left|\phi_L(x-y)-\phi_L(x-c_J)\right| |f(y)|\,\d y\lesssim \int_J |J| \sup_{z\in J} |\phi_L ^\prime(x-z)|  |f(y)|\, \d y.
\]
Using that $|c_L|\simeq\mathrm{dist}(L,0)\simeq |L|$ we have for $z\in J$
\[
\begin{split}
|\phi_L ^\prime(x-z)|&\lesssim c_L |L|\phi(|L|(x-z))+|L|^2\phi(|L|(x-z))\lesssim |L|^2  \left(1+|L||x-z|\right)^{-M} 
\\
&\simeq|L|^2  \left(1+|L||x-c_J|\right)^{-M}.
\end{split}
\]
The last approximate equality can be checked by considering the cases $x\in 3J$ and $x\notin 3J$ separately, remembering that $|L||J|<1$. The combination of the last two displays yields the first estimate in \eqref{eq:deltacancel}. The second estimate in \eqref{eq:deltacancel} follows by the first since 
\[
(1+|L||x-c_J|)^{-M}\simeq (1+|L||x-z|)^{-M}
\]
for $z\in J\supseteq\mathrm{supp}(f)$.

 For $\tau>1$ we first do the same reduction as in the proof of Proposition~\ref{prop:genzygpos}. For $\tau>1$ we can estimate
\begin{equation}\label{eq:ngtmr}
\begin{split}
  \sum_{\substack{L\in \Lambda_\tau\\ \, |L|<|J|^{-1}}} \left\| \Delta_{L}f\right\|_{L^2(\R)} ^2 
&\leq \sum_{\substack  {L'\in\Lambda_{\tau-1}\\|L'| <|J|^{-1}}} \sum_{L\in\Lambda_\tau(L')}\left\|\Delta_{L^*}\left(e^{-2\pi i \lambda(L)\bigcdot }\Delta_{L'}f \right)\right\|_{L^2(\R)}^2
\\
 &\qquad+\sum_{L'\in\Lambda_{\tau-1} ^{|J|^{-1}}}\sum_{\substack{L\in\Lambda_\tau(L') \\ |L|<|J|^{-1}} }\left\|\Delta_{L^*}\left(e^{-2\pi i \lambda(L)\bigcdot }f \right)\right\|_{L^2(\R)}^2.
\end{split}
\end{equation}
The first summand above is estimated by $\lesssim C(\tau-1)|J|\langle|f|\rangle_{B_{(\tau-2)/2},J} ^2$ by using the $L^2$-bound for the smooth Littlewood--Paley square function and the inductive hypothesis.

The second summand above can be estimated by
\[
\begin{split}
&\sum_{\ell:\,2^{\ell}< |J|^{-1}}\sum_{L'\in\Lambda_{\tau-1} ^{|J|^{-1}}} \sum_{\substack{L\in\Lambda_\tau(L')\\|L|=2^\ell}} \left\|\Delta_{2^\ell}\left(e^{-2\pi i \lambda(L)\bigcdot }f \right)\right\|_{L^2(\R)}^2 
\\
&\qquad\qquad =
\sum_{\ell:\, 2^{\ell}< |J|^{-1}}
\sum_{\lambda\in\mathrm{lac}_{\tau-1} ^{|J|^{-1}}} 
\sum_{\substack{L\in\Lambda_\tau\\\lambda(L)
=\lambda\\|L|=2^\ell}}
 \left\|\Delta_{2^\ell}\left(e^{-2\pi i \lambda \bigcdot }f \right)\right\|_{L^2(\R)}^2
 \\
&\qquad\qquad  \lesssim \sum_{\ell:\,2^{\ell}< |J|^{-1}}
\sum_{\lambda\in\mathrm{lac}_{\tau-1} ^{|J|^{-1}}} 
 \left\|\Delta_{2^\ell}\left(e^{-2\pi i \lambda \bigcdot }f \right)\right\|_{L^2(\R)}^2
\end{split}
\]
where, in passing to the last line, we used that for each $\lambda\in\mathrm{lac}_{\tau-1}$ there are at most $O_\tau(1)$ intervals $L\in\Lambda_\tau$ of fixed length with $\lambda(L)=\lambda$. Fixing for a moment $2^{\ell}< |J|^{-1} $ and $x\in\R$ we write 
\[ 
\sum_{\lambda\in\mathrm{lac}_{\tau-1} ^{|J|^{-1}}} \left |\Delta_{2^\ell}\left(e^{-2\pi i  \lambda \bigcdot }f \right) (x)\right|^2 =  \left|\Delta_{2^{\ell}}\left(\sum_{\lambda\in\mathrm{lac}_{\tau-1} ^{|J|^{-1}}} a_{\lambda} e^{-2\pi i  \lambda \bigcdot} f\right)(x)\right|^2\eqqcolon |\Delta_{2^\ell}\left(p_{x,\ell}f\right)(x)|^2
\]
where $\{a_\lambda \}_{\lambda}=\{a_\lambda(x,\ell)\}_{\lambda}$ is in the unit ball of $\ell^2 _{\lambda}$ and $p_{x,\ell}$, implicitly defined above, is as in \eqref{eq:rescaling}. Using the cancellation assumptions on $f$ we see that $\int_J p_{x,\ell} f=0$ so by appealing to \eqref{eq:deltacancel} we get
\[
|\Delta_{2^\ell}(fp_{x,k_\tau})|\lesssim \omega_{2^{-\ell}}*\left( 2^\ell|J|\langle|fp_{x,\ell}|\rangle_{1,J}\ind_J\right)\lesssim \omega_{2^{-\ell}}* \left(2^\ell|J|\langle|f|\rangle_{B_{(\tau-1)/2},J}\ind_J\right)
\]
where we used the H\"older inequality in Orlicz spaces together with the Zygmund--Bonami inequality of order $\tau-1$ from Remark~\ref{rmrk:zygbonami} to control $\langle|p_{x,\ell}|\rangle_{E_{2/(\tau-1)}}\lesssim 1$. Squaring the estimate in the last display, integrating, and then summing for $2^\ell<|J|^{-1}$ yields that the second summand in \eqref{eq:ngtmr} is controlled by a constant multiple of $|J|\langle|f|\rangle_{B_{(\tau-1)/2},J} ^2$. We have proved that $C(\tau)\lesssim (1+C(\tau-1))$ and this concludes the proof of the inductive step and of the proposition.
\end{proof}

\begin{remark}\label{rmrk:molecule_canc}As in Remark~\ref{rmrk:molecule_pos} there is an upgrade of the $L^2$-estimate of Proposition~\ref{prop:genzygcanc} from ``atoms'' to ``molecules'' $f=\sum_{J\in\mathcal J}b_J$ where $\mathcal J$ is a family of pairwise disjoint dyadic intervals and each $b_J$ satisfies the cancellation assumptions of Proposition~\ref{prop:genzygcanc}, namely
\[
\sum_{L\in \Lambda_\tau} \left\| \sum_{J:\, |J|< |L|^{-1}} \Delta_{L} (b_J)  \right\| _{L^2(\mathbb{R})}^2 \lesssim \sum_{J\in\mathcal{J}} |J| \langle b_J \rangle_{B_{(\tau-1)/2},J} ^2.
\]
The base case $\tau=1$ is essentially identical to the corresponding step in the proof of Proposition~\ref{prop:genzygcanc} relying on the pointwise estimate
\[
\left| \Delta_L \left(\sum_{J:\, |L|<|J|^{-1}} b_J\right)\right| \lesssim \omega_{|L|^{-1}}*\left(|L|\sum_{J:\, |L|<|J|^{-1}}|J|\langle|b_J|\rangle_{1,J}\ind_J\right).
\]
 This is a consequence of \eqref{eq:deltacancel} using the cancellation assumption $\int b_J=0$ for each $J\in\mathcal J$. For the inductive step with $\tau>1$, denoting again by $C(\tau)$ the best constant in the desired $L^2$-estimate we clearly have that
\[
\begin{split}
\sum_{L\in  \Lambda_\tau} \left\| \sum_{J:\, |J|< |L|^{-1}} \Delta_{L} (b_J)  \right\| _{L^2(\mathbb{R})}^2 &\lesssim 
\sum_{L\in \Lambda_\tau} \left\| \sum_{J:\, |\widehat{L}|^{-1} \leq |J|< |L|^{-1}} \Delta_{|L|}  (e^{-2\pi i \lambda(L)\bigcdot}b_J)  \right\| _{L^2(\mathbb{R})}^2 
\\
&\qquad+ C(\tau-1)
 \sum_{J\in\mathcal{J}} |J| \langle b_J \rangle_{B_{(\tau-2)/2},J} ^2.
 \end{split}
\]
Using a linearization trick as in the proof of Proposition~\ref{prop:genzygcanc} we have 
\[
\begin{split}
& \sum_{L\in \Lambda_\tau} \left| \sum_{J:\, |\widehat{L}|^{-1} \leq |J|< |L|^{-1}} \Delta_{|L|}  (e^{-2\pi i \lambda(L)\bigcdot}b_J) (x) \right|^2
 \\
 &\qquad\qquad =\sum_{\ell\in\Z} \left|\sum_{J:\,   |J|< 2^{-\ell}} \Delta_{2^\ell}\left(\sum_{\substack{L\in\Lambda_{\tau}\\|L|=2^\ell,\,|\widehat{L}|^{-1} \leq |J|}}   a_L e^{-2\pi i \lambda(L)\bigcdot}b_J\right) (x)  \right|^2.
 \end{split}
\]
for some collection $\{a_L\}_{L \in \Lambda_\tau}=\{a_L(x,\ell)\}_{L \in \Lambda_\tau}$ in the unit ball of $\ell^2 _L$. Fixing for the moment $\ell\in\Z$ and $x\in\R$ we have
\[
\begin{split}
p_{x,\ell}&\coloneqq \sum_{\substack{L\in\Lambda_{\tau}\\|L|=2^\ell,\,|\widehat{L}|\geq |J|^{-1 }}}   a_L e^{-2\pi i \lambda(L)\bigcdot}=\sum_{L'\in\Lambda_{\tau-1} ^{|J|^{-1}}}\left(\sum_{\substack{L\in\Lambda_\tau (L')\\|L|2^\ell}}a_L  \right)  e^{-2\pi i \lambda(L)\bigcdot}
\\
&\eqqcolon \sum_{\lambda\in\mathrm{lac}_{\tau-1} ^{|J|^{-1}}}\beta_{\lambda}e^{-2\pi i \lambda \bigcdot}
\end{split}
\]
with $\|\{\beta_{\lambda }\} \|_{\ell^2 _\lambda}=O(1)$. Here we used that there at at most $O(1)$ intervals $L\in\Lambda_\tau$ with fixed length $|L|=2^\ell$ inside $\widehat{L}$. Using the cancellation of $p_{x,\ell} b_J$ we can estimate pointwise $|\Delta_{2^{\ell}}(p_{x,\ell} b_J)|\lesssim 2^{\ell} |J| \langle |p_{x,J} b_J| \rangle_{1,J} \, \omega_{2^\ell} \ast \ind_{J}$, and by Remark~\ref{rmrk:zygbonami} and H\"older's inequality for Orlicz spaces we have that
\[
\langle |p_{x,\ell} b_J|\rangle_{1,J}\lesssim \langle|b_J|\rangle_{B_{(\tau-1)/2},J}.
\]
With this information the proof of the estimate can now be completed summing over $|J|2^{\ell}<1$ as in the proof of Proposition~\ref{prop:genzygcanc}.
\end{remark}

We conclude this section by recording the generalized Zygmund--Bonami inequality under cancellation conditions. This is just a combination of Propositions~\ref{prop:genzygpos} and~\ref{prop:genzygcanc}. 

\begin{cor}\label{cor:genzygbonami}Let $\sigma$ be a nonnegative real number
and $\tau$ be a positive integer. Assume that $\mathrm{supp}(f)\subset J$ for some finite interval $J$ and that $\widehat f(\lambda )=0$ for all $\lambda \in\mathrm{lac}_{0} ^{|J|^{-1}}\cup\cdots\cup \mathrm{lac}_{\tau-1} ^{|J|^{-1}}$. Then
\[
\left( \sum_{L\in \Lambda_\tau }  \left\langle|\Delta_{L} (f) |\right 
\rangle_{B_{\sigma/2},\gamma J} ^2 \right)^{1/2}
\lesssim \l |f|\r _{B_{(\sigma+\tau)/2}, J}.
\]
\end{cor}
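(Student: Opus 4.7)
My plan is to split the sum over $L\in\Lambda_\tau$ in the conclusion into two ranges, $|L|\geq |J|^{-1}$ and $|L|<|J|^{-1}$, and handle each by a direct appeal to one of Propositions~\ref{prop:genzygpos} and \ref{prop:genzygcanc} respectively. The cancellation hypothesis is only used for the second range, while the first range is indifferent to it.

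For the high-frequency range $|L|\geq|J|^{-1}$, the first estimate of Proposition~\ref{prop:genzygpos} applied to the (possibly modified) interval $J$ directly gives
\[
\left( \sum_{L\in\Lambda_\tau^{|J|^{-1}}} \langle|\Delta_L f|\rangle_{B_{\sigma/2},\gamma J}^2 \right)^{1/2} \lesssim \langle|f|\rangle_{B_{(\sigma+\tau)/2},J},
\]
which is the desired bound. If $|J|$ is not a dyadic number we replace it by the nearest dyadic scale (which changes the truncations $\mathrm{lac}_j^{|J|^{-1}}$ by an absorbable $O_\tau(1)$ factor) so that Proposition~\ref{prop:genzygcanc} can be invoked in the next step.

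For the low-frequency range $|L|<|J|^{-1}$, I would first remove the Orlicz structure by using H\"older's inequality for Orlicz averages on the finite-measure set $\gamma J$. Since $B_{\sigma/2}(t)\leq C_\sigma(1+t^{3/2})$ for all $t\geq 0$ (up to an additive constant absorbed in the Luxemburg normalization), there holds $\langle |\Delta_L f|\rangle_{B_{\sigma/2},\gamma J}\lesssim \langle|\Delta_L f|\rangle_{2,\gamma J}=|\gamma J|^{-1/2}\|\Delta_L f\|_{L^2(\gamma J)}$. Summing the squares, using $\|\Delta_L f\|_{L^2(\gamma J)}\leq\|\Delta_L f\|_{L^2(\R)}$, and applying Proposition~\ref{prop:genzygcanc} — whose cancellation hypothesis coincides exactly with the one assumed in the corollary — gives
\[
\sum_{\substack{L\in\Lambda_\tau\\ |L|<|J|^{-1}}} \langle|\Delta_L f|\rangle_{B_{\sigma/2},\gamma J}^2 \lesssim \frac{1}{|J|}\sum_{\substack{L\in\Lambda_\tau\\|L|<|J|^{-1}}}\|\Delta_L f\|_{L^2(\R)}^2 \lesssim \langle|f|\rangle_{B_{(\tau-1)/2},J}^2.
\]
Since $(\tau-1)/2\leq (\sigma+\tau)/2$ for $\sigma\geq 0$ and $B_\alpha\leq B_\beta$ pointwise whenever $\alpha\leq \beta$, the right-hand side is dominated by $\langle|f|\rangle_{B_{(\sigma+\tau)/2},J}^2$.

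Combining the two ranges yields the claimed inequality. There is no deep obstacle here, only small bookkeeping: the main point to verify is the Orlicz-to-$L^2$ comparison on the finite-measure set $\gamma J$, which is a standard fact since $B_{\sigma/2}$ grows slower than $t^{1+\varepsilon}$ at infinity for any $\varepsilon>0$, and the minor adjustment from a general $J$ to one of dyadic length so that Proposition~\ref{prop:genzygcanc} applies verbatim.
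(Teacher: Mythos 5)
Your proposal is correct and follows exactly the route the paper takes: the paper only remarks that Corollary~\ref{cor:genzygbonami} ``is just a combination of Propositions~\ref{prop:genzygpos} and~\ref{prop:genzygcanc}'' and leaves the details to the reader. You have filled in those details in the natural way---splitting the frequency sum at $|L|=|J|^{-1}$, invoking the first estimate of Proposition~\ref{prop:genzygpos} on the high range, and on the low range using the standard comparison $\langle\cdot\rangle_{B_{\sigma/2},\gamma J}\lesssim\langle\cdot\rangle_{2,\gamma J}$ followed by $\|\Delta_L f\|_{L^2(\gamma J)}\leq\|\Delta_L f\|_{L^2(\R)}$ and Proposition~\ref{prop:genzygcanc}, together with the monotonicity $B_{(\tau-1)/2}\leq B_{(\sigma+\tau)/2}$; the dyadic-length normalization of $J$ is the same harmless adjustment the paper waves through in the statement of Proposition~\ref{prop:genzygcanc}.
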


\section{An $L^{B_{\sigma/2}}(\mathbb{R})$ Calderon-Zygmund decomposition}\label{sec:CZdecomp} We describe in this section a Calder\'on-Zygmund decomposition adapted to the (global) Orlicz space $L^{B_{\sigma/2}}(\R)$ for $\sigma\geq 0$. Such a Calder\'on-Zygmund decomposition, which is influenced by the one appearing in \cite[{Appendix A}]{UMP}, is available to us because of the specific choice of the Young function $B_{\sigma/2}$ and it is adapted to the finite order lacunary setup.

Recall that for $\sigma\geq 0$ we write $f\in  L^{B_{\sigma/2}}(\R)$ if for some (or equivalently all) $\lambda>0$ there holds
\[
\int_{\R} B_{\sigma/2}\left(\frac{|f(x)|}{\lambda}\right)\, \d x<+\infty.
\]
There is an Orlicz maximal operator associated with $B_\sigma$
\[
\M_{B_{\sigma/2}} f(x) \coloneqq \sup_{Q \ni x} \langle |f| \rangle_{B_{\sigma/2},Q},\qquad x\in\R,
\]
with the supremum being over all intervals $Q$ of $\R$ containing $x$. The dyadic version of $\M_{B_{\sigma/2}}$ is defined similarly with the supremum over all dyadic intervals $Q\in\mathcal D$ with $\mathcal D$ some dyadic grid. We will write $\M_{B_{\sigma/2},\mathcal D}$ for the dyadic version. Below we denote by $Q\in\mathcal D$ a dyadic interval, $Q^{(1)}$ its dyadic parent and set $Q^{(k+1)}$ to be the dyadic parent of $Q^{(k)}$. 

\begin{remark}[Existence of stopping intervals]\label{rmrk:stopping} For the Calder\'on-Zygmund decomposition we will choose stopping intervals that are maximal under the condition $\langle|f|\rangle_{B_{\sigma/2},I}>\lambda$. The existence of these stopping intervals relies on the following fact: If  $f\in L^{B_{\sigma/2}}(\R)$ for some $\sigma\geq 0$ and $I$ is a dyadic interval in some grid $\mathcal D$, then $\langle |f| \rangle_{B_{\sigma/2},I^{(k)}} \to 0$ as $k \to +\infty$.  This can be easily proved using for example the fact that the Young function $B_{\sigma/2}$ is submultiplicative.
\end{remark}

\begin{proposition}\label{prop:CZ_decomp} Let $\sigma$ be a fixed nonnegative integer, $f\in L^{B_{\sigma/2}}(\R)$,  and  $\alpha>0$. There exists a collection $\mathcal J$ of pairwise disjoint dyadic intervals $J$ and a decomposition of $f$
\[
f=g+ b_{\mathrm{canc},\sigma} + b_{\mathrm{lac}, \sigma}
\]
such that the following hold:
\begin{itemize}[leftmargin=2em,itemsep=1em]
\item[(i)] The function $g$ satisfies $\|g\|_{L^\infty(\R)} \lesssim \alpha$ and  $\|g\|_{L ^1(\R)} \lesssim  \|f\|_{L ^1(\R)}$.
\item[(ii)] The function $b_{\mathrm{canc},\sigma}$ is supported in $\cup_{J\in\mathcal J}J$ and in particular
\[
b_{\mathrm{canc},\sigma}=\sum_{J\in\mathcal J} b_J,\quad  \supp(b_J)\subseteq J,\qquad \widehat{b_J}(\lambda) =0\qquad \forall \lambda \in \mathrm{lac}_0\cup\cdots\cup \mathrm{lac}_{\sigma}.
\]
Furthermore we have that $\langle|b_J|\rangle_{B_{\sigma/2},J}\lesssim \alpha$ for all $J\in\mathcal J$ 
and
\[
\sum_{J\in\mathcal J}|J| \leq \int_{\R} B_{\sigma/2}\left(\frac{|f|}{\alpha}\right).
\]
\item[(iii)] The function $b_{\mathrm{lac},\sigma}$ is also supported on $\cup_{J\in\mathcal J} J$ and satisfies
\[
\|b_{\mathrm{lac},\sigma}\|_{L^2(\R)} ^2 \lesssim  \sum_{ J \in\mathcal{J}}|J| \langle |b_J|\rangle_{B_{\sigma /2},J} ^2
\lesssim
\alpha^2\int_{\R} B_{\sigma/2}\left( \frac{|f|}{\alpha} \right).
\]
\end{itemize}
\end{proposition}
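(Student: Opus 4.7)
The plan is to carry out a Calder\'on--Zygmund stopping-time decomposition in the Orlicz $L^{B_{\sigma/2}}$ sense and then refine each bad block by projecting off its lacunary-frequency content, with the projection controlled by the generalized Zygmund--Bonami inequality of Section~\ref{sec:genzyg}. Fix a dyadic grid $\mathcal{D}$ on $\R$ and let $\mathcal{J}$ be the collection of dyadic intervals $J\in\mathcal{D}$ that are maximal under the stopping condition $\langle|f|\rangle_{B_{\sigma/2},J}>\alpha$; these exist by Remark~\ref{rmrk:stopping}. Three standard consequences then set up the rest of the proof: the defining inequality $|J|^{-1}\int_J B_{\sigma/2}(|f|/\alpha)>1$ directly yields $\sum_{J}|J|\leq \int_{\R} B_{\sigma/2}(|f|/\alpha)$; Lebesgue differentiation gives $|f|\leq \alpha$ a.e.\ off $\bigcup\mathcal{J}$; and the doubling of $B_{\sigma/2}$ combined with the maximality at the dyadic parent gives $\langle|f|\rangle_{B_{\sigma/2},J}\lesssim \alpha$ for each $J\in\mathcal{J}$.

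For each $J\in\mathcal{J}$, let $V_J\subseteq L^2(J)$ denote the closed linear span of the lacunary characters $\{e^{2\pi i\lambda\cdot}\ind_J:\,\lambda\in\bigcup_{j=0}^{\sigma}\mathrm{lac}_j\}$ and let $P_{V_J}$ be the $L^2(J)$-orthogonal projection onto it; note that $\ind_J\in V_J$ since $0\in\mathrm{lac}_0$. Set
\[
b_J := (I-P_{V_J})(f\ind_J),\qquad \ell_J := P_{V_J}(f\ind_J)-\langle f\rangle_J\ind_J,
\]
\[
g := f\ind_{\R\setminus\bigcup\mathcal{J}}+\sum_{J\in\mathcal{J}}\langle f\rangle_J\ind_J,\qquad b_{\mathrm{canc},\sigma} := \sum_J b_J,\qquad b_{\mathrm{lac},\sigma} := \sum_J \ell_J.
\]
The identity $f=g+b_{\mathrm{canc},\sigma}+b_{\mathrm{lac},\sigma}$, the supports, and the Fourier-vanishing $\widehat{b_J}(\lambda)=0$ for $\lambda\in\bigcup_{j=0}^{\sigma}\mathrm{lac}_j$ all hold by construction. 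Item (i) and the stopping-mass bound in (ii) are routine: $|g|\leq\alpha$ a.e.\ off $\bigcup\mathcal{J}$, $|g|\leq|\langle f\rangle_J|\leq \langle|f|\rangle_{1,J}\leq \langle|f|\rangle_{B_{\sigma/2},J}\lesssim \alpha$ on each $J$, and $\|g\|_{L^1}\leq \|f\|_{L^1}$ by the triangle inequality.

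The heart of the proof is establishing the two remaining inequalities $\langle|b_J|\rangle_{B_{\sigma/2},J}\lesssim \alpha$ and $\|\ell_J\|_{L^2(J)}^2\lesssim |J|\langle|b_J|\rangle_{B_{\sigma/2},J}^2$. Both come from the generalized Zygmund--Bonami estimate in the form of Remark~\ref{rmrk:zygbonami}: every lacunary polynomial $h=\sum a_\lambda e^{2\pi i\lambda\cdot}$ with $\lambda\in\mathrm{lac}_\sigma^{|J|^{-1}}$ satisfies $\langle|h|\rangle_{E_{2/\sigma},J}\lesssim\|\{a_\lambda\}\|_{\ell^2}$, and dually $(\sum_\lambda|\widehat{f\ind_J}(\lambda)|^2)^{1/2}\lesssim|J|\langle|f|\rangle_{B_{\sigma/2},J}$. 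Expanding $\ell_J=\sum_\lambda a_\lambda^{(J)} e^{2\pi i\lambda\cdot}\ind_J$ and controlling the coefficients $\{a_\lambda^{(J)}\}$ in $\ell^2$ through the Gram-matrix inversion defining $P_{V_J}$ gives $\|\{a_\lambda^{(J)}\}\|_{\ell^2}\lesssim \langle|f|\rangle_{B_{\sigma/2},J}\lesssim \alpha$; feeding this back through Remark~\ref{rmrk:zygbonami} produces both the Orlicz bound on $\ell_J$ (hence on $b_J$ by the triangle inequality) and the $L^2$ bound on $\ell_J$ via the embedding $\exp(L^{2/\sigma})\hookrightarrow L^2$ on $(J,\d x/|J|)$. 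Summing disjointly over $\mathcal{J}$ and using $\sum_J|J|\lesssim \int B_{\sigma/2}(|f|/\alpha)$ closes (iii).

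The main obstacle lies in the simultaneous control of $P_{V_J}$ in $L^2(J)$ and in the Orlicz space $L^{B_{\sigma/2}}(J,\d x/|J|)$: the lacunary characters $\{e^{2\pi i\lambda\cdot}\ind_J\}$ fail to be orthogonal on $L^2(J)$ at the critical scale $|\lambda|\simeq |J|^{-1}$, so the Gram matrix has non-negligible off-diagonal entries and coefficients cannot be extracted naively by inner products. The generalized Zygmund--Bonami inequality provides exactly the scale-invariant surrogate of orthogonality needed both to control the projection coefficients from the Orlicz norm of $f$ and, in reverse, to translate $\ell^2$ coefficient information back into Orlicz and $L^2$ estimates on $\ell_J$; the careful bookkeeping at the scale $|J|^{-1}$ is what makes the decomposition work.
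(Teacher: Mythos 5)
Your stopping-time setup and the estimate $\sum_J|J|\leq\int B_{\sigma/2}(|f|/\alpha)$ agree with the paper (the paper phrases the selection via the level set of the dyadic Orlicz maximal operator, but this is equivalent), and moving the block means $\langle f\rangle_J\ind_J$ into $g$ rather than into $b_{\mathrm{lac},\sigma}$ is a cosmetic deviation. The substantive problem is in how you define $b_J$ and $\ell_J$.

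You orthogonally project $f\ind_J$ onto the span $V_J$ of \emph{all} lacunary characters $e^{2\pi i\lambda\cdot}\ind_J$ with $\lambda\in\mathrm{lac}_0\cup\cdots\cup\mathrm{lac}_\sigma$. This is an infinite family accumulating at $0$, it is not orthogonal on $L^2(J)$, and you correctly identify the resulting Gram-matrix inversion as ``the main obstacle'' --- but you then declare, without argument, that ``careful bookkeeping at the scale $|J|^{-1}$ is what makes the decomposition work.'' That sentence is where the proof would actually have to be, and it is missing. In particular it is not clear how the claimed bound $\|\{a^{(J)}_\lambda\}\|_{\ell^2}\lesssim\langle|f|\rangle_{B_{\sigma/2},J}$ on the projection coefficients follows from Remark~\ref{rmrk:zygbonami} once the Gram matrix has nontrivial off-diagonal entries, nor why $P_{V_J}$ is even well defined on all of $L^2(J)$ given the accumulation of low frequencies.

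The paper sidesteps all of this by subtracting \emph{only} the Fourier components at the truncated frequencies $\lambda\in\mathrm{lac}_\rho^{|J|^{-1}}$, $\rho=0,\dots,\sigma$, namely
\[
b_{J,\mathrm{lac},\sigma}(y)=\sum_{\rho=0}^{\sigma}\sum_{\lambda\in\mathrm{lac}_\rho^{|J|^{-1}}}\widehat{f_J}(\lambda)\,e^{2\pi i\lambda y}\,\frac{\ind_J(y)}{|J|},\qquad b_J:=f_J-b_{J,\mathrm{lac},\sigma}.
\]
Since every $\lambda\in\mathrm{lac}_\rho^{|J|^{-1}}$ is a sum of powers of two each at least $|J|^{-1}$, and $|J|^{-1}\in 2^\Z$, each such $\lambda$ is an integer multiple of $|J|^{-1}$; consequently the characters $\{e^{2\pi i\lambda\cdot}\}_{\lambda\in\bigcup_\rho\mathrm{lac}_\rho^{|J|^{-1}}}$ are \emph{orthonormal} on $(J,\d y/|J|)$. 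There is no Gram matrix: the coefficients are literally the Fourier coefficients $\widehat{f_J}(\lambda)$, the required cancellation $\widehat{b_J}(\lambda)=0$ for $\lambda\in\bigcup_\rho\mathrm{lac}_\rho^{|J|^{-1}}$ is immediate, and the bounds $\langle|b_{J,\mathrm{lac},\sigma}|\rangle_{2,J}\lesssim\langle|f_J|\rangle_{B_{\sigma/2},J}\lesssim\alpha$ (hence (ii) and (iii)) come straight from the Zygmund--Bonami inequality of Remark~\ref{rmrk:zygbonami} and H\"older in Orlicz spaces. Note also that the truncated cancellation is precisely what Proposition~\ref{prop:genzygcanc} needs downstream, so projecting against the untruncated lacunary set buys nothing. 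The observation you are missing --- that on a dyadic $J$ one only needs the frequencies in $\mathrm{lac}_\rho^{|J|^{-1}}$, and that these form an orthonormal family after normalization --- is exactly what collapses the hard step of your proposal into a one-line computation.
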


\begin{proof} We begin by recalling that $f\in L^{B_{\sigma/2}} (\R)$ implies that $\int_{\R} B_{\sigma/2}(|f|/\alpha)<+\infty$ for all $\alpha>0$. By Remark~\ref{rmrk:stopping} and \cite[{Theorem 5.5}]{UMP} we have that the dyadic Orlicz maximal operator $\M_{B_{\sigma/2},\mathcal D}$ satisfies
\[
|E_\alpha|\coloneqq |\{x\in\R:\, \M_{B_{\sigma/2},\mathcal D}f(x)>\alpha\}|\leq \int_{\R} B_{\sigma/2}\left(\frac{|f|}{\alpha}\right),\qquad \alpha>0.
\]
Letting $\mathcal J$ denote the collection of maximal dyadic intervals contained in $E_\alpha$ we have that for every $J\in \mathcal J$
\[
\alpha<\langle |f| \rangle_{B_{\sigma/2},J} \leq 2 \alpha,\qquad  \sum_{J\in\mathcal J}|J| \leq \int_\R B_{\tau/2}\left(\frac{|f|}{\alpha}\right);
\]
The upper bound in the approximate inequality of the leftmost estimate above follows by the maximality of $J$ and the convexity of the Young function of $B_{\sigma/2}$ which implies that
\[
\langle |f|\rangle_{B_{\sigma/2}, J}\leq \rho \langle |f|\rangle_{B_{\sigma/2}, \rho J},\qquad \rho>1;
\] 
see\cite[{Proposition A.1}]{UMP} and  \cite[{eq. (5.2)}]{UMP}.  One routinely checks that $g\coloneqq f\ind_{\R\setminus \cup_{J\in\mathcal J} J}$ satisfies (i).

For the ``atoms'' we set $f_J\coloneqq f\ind_J$ and define
\[
b_{J,\mathrm{lac},\sigma}(y)\coloneqq \sum_{\rho=0} ^{\sigma} \left(\sum_{\lambda \in\mathrm{lac}_\rho ^{|J|^{-1}}}  \widehat{f_J}(\lambda )e^{2\pi i \lambda y}\right)\frac{\ind_J(y)}{|J|},\qquad b_J\coloneqq f_J -b_{J,\mathrm{lac},\sigma},
\]
and $b_{\mathrm{lac},\sigma}\coloneqq \sum_{J\in\mathcal J} b_{J,\mathrm{lac},\sigma}$ and $b_{\mathrm{canc},\sigma}\coloneqq \sum_{J\in\mathcal J} b_J$. The cancellation conditions of (ii) for $b_{\mathrm{canc},\sigma}$ follow immediately by the definition above. Furthermore by the H{\" o}lder inequality for Orlicz spaces and the Zygmund--Bonami inequality of order $\rho\in\{1,\ldots,\sigma\}$  as in Remark~\ref{rmrk:zygbonami}, one sees that
\[
\langle |b_{J,\mathrm{lac}, \sigma}|\rangle_{B_{\tau/2},J}\lesssim \langle |b_{J,\mathrm{lac}, \sigma}|\rangle_{2,J} \lesssim  \langle |f_J|\rangle_{B_{\sigma/2},J}\lesssim \alpha.
\]
This and the triangle inequality also yield $\langle|b_J|\rangle_{B_{\sigma/2},J} \lesssim \langle |f_J|\rangle_{B_{\sigma/2},J}\lesssim \alpha$ thus completing the proof of the desired conclusions in (ii). Finally for (iii) we estimate as above
\[
\|b_{\mathrm{lac},\sigma}\|_{L^2(\R)} ^2 \lesssim  \sum_{J\in\mathcal J}|J|\langle |f_J|\rangle_{B_{\sigma/2},J} ^2 \lesssim \alpha^2 \int_\R B_{\tau/2}\left(\frac{|f|}{\alpha}\right)
\]
and the proof is complete.
\end{proof}

\section{Proof of Theorem \ref{thm:main} and Corollaries}\label{sec:proofmain} In the first part of this section we compile together the results of the previous sections to conclude the proof of Theorem \ref{thm:main}.  In the second part we show how to conclude our corollaries, namely Theorem~\ref{thm:Marcink} and~\ref{thm:horm}.

\subsection{Proof of Theorem \ref{thm:main}} Let us fix a positive integer $\tau$ and $m\in R_{2,\tau}$. Before entering the heart of the proof we note that it suffices to prove the theorem for multipliers $m$ having the form
\[
m=\sum_{I\in\mathcal I} c_I \ind_I
\]
where the family of intervals $\mathcal I$ has overlap at most $N$, for each $I\in\mathcal I$ there exists a unique $L=L_I\in \Lambda_\tau$ such that $I\subset L_{I}$ and  for each fixed $L\in\Lambda_\tau$ there holds
\[
\sum_{I:\, L_I=L}|c_I|^2 \leq N^{-1}.
\]
See the analysis in \cite[{p. 533}]{TW} for the details of this approximation argument. For $m$ of this form, we now can write
\[
\mathrm{T}_m(f)=\sum_{I\in\mathcal I} c_I\mathrm{P}_If=\sum_{L\in\Lambda_\tau}\sum_{I:\, L_I=L} c_I\mathrm{P}_I(\Delta_{L}f),\qquad \mathrm{P}_I f\coloneqq (\ind_I \hat f)^\vee,
\]
a fact that we will use repeatedly in what follows.

\subsubsection{The upper bound in Theorem~\ref{thm:main}}Let $f$ be a function in $L^{B_\tau}(\mathbb{R})$ and $\alpha>0$ be fixed. We decompose $f$ according to the Calder\'on-Zygmund decomposition in Proposition~\ref{prop:CZ_decomp} with $\sigma=\tau$ yielding
\[
f=g+b_{\mathrm{canc},\tau}+b_{\mathrm{lac},\tau}.
\]
We directly estimate $g+b_{\mathrm{lac},\tau}$ in $L^2$ using (i) and (iii) of Proposition~\ref{prop:CZ_decomp}
\[
 |\left\{ x\in\R: \, |\mathrm{T}_m(g+b_{\mathrm{lac},\tau})(x)|>\alpha \right\}| \lesssim \frac{1}{\alpha^2} \left\|g+b_{\mathrm{lac},\tau}\right\|_{L^2(\R)} ^2 \lesssim \int_\R B_{\tau/2}\left(\frac{|f|}{\alpha}\right).
\]
The main part of the proof deals with the bad part $b_{\mathrm{canc},\tau}=\sum_{J\in\mathcal J } b_J$ and it suffices to estimate
\[
|\{x\in\R\setminus \cup_{J\in\mathcal J} 6J:\, |\mathrm{T}_m(b_{\mathrm{canc},\tau})|>\alpha\}|
\]
as the measure $|\cup_{J\in\mathcal J} 6J|$ satisfies the desired estimate by (ii) of Proposition~\ref{prop:CZ_decomp}. We will adopt the splitting
\[
\begin{split}
\mathrm{T}_m\left(\sum_J b_J\right)&=\sum_I c_I\mathrm{P}_I \left(\sum_{J:\, |J|\geq  |L_{I}|^{-1}}\Delta_{L_I}( b_J)\ind_{\R\setminus 3J}\right)+\sum_I c_I\mathrm{P}_I \left(\sum_{J:\, |J|<  |L_{I}|^{-1}}\Delta_{L_I}( b_J)\right)
\\
&\qquad +\sum_I c_I\mathrm{P}_I \left(\sum_{J:\, |J|\geq |L_{I}|^{-1}}\Delta_{L_I}( b_J)\ind_{3J}\right)\eqqcolon \mathrm{I}+\mathrm{II}+\mathrm{III}.
\end{split}
\]
The main term is $\mathrm{III}$. Indeed we can estimate the term $\mathrm{I}$ in $L^2(\R)$ using Remark~\ref{rmrk:molecule_pos}, while $\mathrm{II}$ is also estimated in $L^2(\R)$ using Remark~\ref{rmrk:molecule_canc} this time. Note that each $b_J$ has the required cancellation by (ii) of Proposition~\ref{prop:CZ_decomp}. Using also (ii) of Proposition~\ref{prop:CZ_decomp} to control the averages $\langle|b_J|\rangle_{B_{(\tau-1)/2},J} \lesssim \langle|b_J|\rangle_{B_{\tau/2},J}\lesssim\alpha $  we have
\[
\left|\left\{|\mathrm{I}+\mathrm{II}|>\alpha\right\}\right| \lesssim\frac{1}{\alpha^2}\sum_{J\in\mathcal J}|J|\langle|b_J|\rangle_{B_{(\tau-1)/2},J} ^2\lesssim \sum_{J\in\mathcal J}|J|\lesssim \int_{\R}B_{\tau}(|f|/\alpha)
\]
as desired. 

It remains to deal with  $\mathrm{III}$ and we make a further splitting. Let $k_I\in\Z$ be such that $2^{k_I}<|I|\leq 2^{k_I+1}$. Of course we will always have that $|L_I|\geq 2^k_I$ since $I\subseteq L_{I}$. We write
\begin{align*}
\mathrm{III} & =\sum_I c_I\mathrm{P}_I \left(\sum_{J:\, 2^{-k_I}>|J|\geq  |L_{I}|^{-1}}\Delta_{L_I}( b_J)\ind_{3J}\right)+\sum_I c_I\mathrm{P}_I \left(\sum_{J:\, |J|\geq  2^{-k_I}}\Delta_{L_I}( b_J)\ind_{3J}\right) \\
& \eqqcolon \mathrm{III_1}+\mathrm{III_2}.
\end{align*}
We first handle the term $\mathrm{III}_1$. Let $\Delta_I$ be the smooth frequency projections on the interval $I$ as fixed in \S\ref{sec:lacnot}; then in particular we can write $\mathrm{P}_I \Delta_I=\mathrm{P}_I$ and we have the familiar pointwise estimate
\[  
|\Delta_I( \Delta_{L_I}(b_J) \ind_{3J}) | \lesssim \omega_{|I|^{-1}} *\left( \langle |\Delta_{L_I}(b_J)|\rangle_{1,3J}\ind_J\right) 
\]
as $|I|^{-1}\simeq 2^{-k_I}>|J|$. We thus get
\[
\begin{split}
|\{|\mathrm{III}_1|>\alpha \}|&  \lesssim\frac{1}{\alpha^2}\sum_{L\in\Lambda_\tau}N\sum_{I:\,  L_I=L}|c_I|^2 \int_{\R}\left|\omega_{|I|^{-1}}*\left(\sum_{J:\, |J|\geq 2^{-k_I}} \langle |\Delta_L(b_J)|\rangle_{1,3J} \ind_J \right)\right|^2
\\
&\lesssim \frac{1}{\alpha^2}\sum_{J\in\mathcal J} \sum_{L\in\Lambda_\tau ^{|J|^{-1}}}  |J|\langle |\Delta_L(b_J)|\rangle_{1,3J}^2 \lesssim \sum_{J\in\mathcal J}|J|\lesssim\int_\R B_{\tau/2}\left(\frac{|f|}{\alpha}\right)
\end{split}
\]
where we used the $\ell^2$-control on the coefficients $\{c_I\}_{L_I=L}$ in passing to the second line and the generalized Zygmund--Bonami inequality of Proposition~\ref{prop:genzygpos} together with the properties of the Calder\'on-Zygmund decomposition in the penultimate approximate inequality.

The steps required for dealing with the the term $\mathrm{III}_2$ are essentially the same as those in \cite{TW}, however, as here we are dealing with a higher order set up, we include them for the sake of completeness. We will split the estimate for $\mathrm{III}_2$ into two parts. In the first we keep the part of the multiplier $\ind_I=\ind_{[\ell_I,r_I]}$ at scale $O(|J|^{-1})$ around its singularities which are at the endpoints. We make this precise now.

Let $0\leq \psi_{I,J}\leq 1$ be a smooth bump which is $1$ on the $(10|J|)^{-1}$-neighborhood of the endpoints $\{\ell_I,r_I\}$ of $I$ and vanishes off the $(5|J|)^{-1}$-neighborhood of the endpoints, and satisfies $\|\partial^\alpha\psi_{I,J}\|_{L^\infty}\lesssim |J|^\alpha$ for all $\alpha$ up to some sufficiently large integer $M$. Letting $\Psi_{I,J}$ denote the operator with symbol $\psi_{I,J}$ we define 
\[
{\mathcal E}(\{b_J\}_{J\in \mathcal J})\coloneqq \sum_I c_I \mathrm{P}_I \left(\sum_{J:\, |J|\geq 2^{-k_I}} \Psi_{I,J} \left(\Delta_{L_I} (b_J) \ind_{3J}\right)\right) 
\]
The following lemma shows that the operator ${\mathcal E}(\{b_J\}_{J\in \mathcal J})$ can be dealt with, again, by $L^2$-estimates.

\begin{lemma}\label{lem:local_sing} We have the estimate
\[
\left\|{\mathcal E}(\{b_J\}_{J\in\mathcal J}) \right\|_{L^2(\R)} ^2 \lesssim \sum_{J\in\mathcal J} |J|  \langle|b_J|\rangle_{B_{\tau/2},J} ^2\lesssim \alpha^2 \int_{\R} B_{\tau/2}\left(\frac{|f|}{\alpha}\right).
\]
\end{lemma}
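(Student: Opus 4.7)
The second inequality in the displayed bound follows immediately from item~(ii) of Proposition~\ref{prop:CZ_decomp}, which provides $\langle|b_J|\rangle_{B_{\tau/2},J}\lesssim \alpha$ and $\sum_{J\in\mathcal{J}}|J|\lesssim \int_{\R} B_{\tau/2}(|f|/\alpha)$. The real content is the first inequality
\[
\|\mathcal{E}(\{b_J\})\|_{L^2(\R)}^2 \;\lesssim\; \sum_{J\in\mathcal{J}}|J|\,\langle|b_J|\rangle_{B_{\tau/2},J}^2.
\]
The plan is to decouple the outer sum over $I$ via Plancherel, to treat each inner $h_I\coloneqq\sum_{J}\Psi_{I,J}(\Delta_{L_I}(b_J)\mathbf{1}_{3J})$ exploiting the tight frequency localization of the $\Psi_{I,J}$, and then to invoke the generalized Zygmund--Bonami estimate from Proposition~\ref{prop:genzygpos} to finish.

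For the decoupling step, $c_I\mathrm{P}_Ih_I$ has Fourier support in $I$, the family $\{I\}$ has bounded overlap in frequency, and the hypothesis $\sum_{I:L_I=L}|c_I|^2\leq N^{-1}$ is in force from the approximation scheme recalled at the start of the proof of Theorem~\ref{thm:main}. Plancherel together with a pointwise Cauchy--Schwarz in frequency then yields $\|\mathcal{E}\|_{L^2}^2\lesssim N\sum_I|c_I|^2\|h_I\|_{L^2}^2$. For each summand, the combined operator $\mathrm{P}_I\Psi_{I,J}$ has Fourier support of measure $\lesssim |J|^{-1}$, so a Bernstein-type argument using $\|\hat g\|_\infty\leq \|g\|_1$ produces the single-atom estimate
\[
\|\mathrm{P}_I\Psi_{I,J}(\Delta_{L_I}(b_J)\mathbf{1}_{3J})\|_{L^2}^2 \;\lesssim\; |J|^{-1}\|\Delta_{L_I}(b_J)\|_{L^1(3J)}^2 \;\simeq\; |J|\,\langle|\Delta_{L_I}(b_J)|\rangle_{1,3J}^2,
\]
which is precisely the scale of the desired bound.

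The main obstacle is combining these per-atom estimates into a bound for $\|h_I\|_{L^2}^2$, since the symbols $\psi_{I,J}$ at different dyadic values of $|J|$ are \emph{nested} rather than orthogonal in frequency, preventing a direct application of Plancherel. I would resolve this via the telescoping annular decomposition $\psi_{I,2^j}(\xi)=\sum_{k\geq j}\bigl(\psi_{I,2^k}(\xi)-\psi_{I,2^{k+1}}(\xi)\bigr)$, each summand being supported in an annular shell at scale $2^{-k}$ around the endpoints of $I$ and thus essentially disjoint in $k$. Interchanging sums in $j,k$ and applying Plancherel across the disjoint annular shells effectively decouples the scales of $|J|$, and combining this with the per-atom bound and the bounded spatial overlap of the $3J$'s at each fixed dyadic scale delivers
\[
\|h_I\|_{L^2}^2 \;\lesssim\; \sum_{J:\,|J|\geq 2^{-k_I}}|J|\,\langle|\Delta_{L_I}(b_J)|\rangle_{1,3J}^2.
\]

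Feeding this into the decoupling, swapping the order of summation, and observing that $|J|\geq 2^{-k_I}$ together with $I\subseteq L_I$ forces $|L_I|\geq|J|^{-1}$, one arrives at
\[
\|\mathcal{E}\|_{L^2}^2 \;\lesssim\; \sum_{J\in\mathcal{J}}|J|\sum_{L\in\Lambda_\tau^{|J|^{-1}}}\langle|\Delta_L(b_J)|\rangle_{1,3J}^2.
\]
The inner $L$-sum is controlled by the first assertion of Proposition~\ref{prop:genzygpos} applied with $\sigma=0$: in that case $B_{\sigma/2}(t)=t$ so the left-hand side there is precisely the squared sum of $L^1$-averages over $3J$, yielding the bound $\langle|b_J|\rangle_{B_{\tau/2},J}^2$ and completing the proof.
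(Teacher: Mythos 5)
Your decoupling over $I$ via the bounded overlap and the $\ell^2$-control on the coefficients $\{c_I\}_{L_I=L}$, as well as the closing appeal to the first estimate of Proposition~\ref{prop:genzygpos} with $\sigma=0$, coincide with the paper's argument. Your per-atom Bernstein estimate is also correct: since $\psi_{I,J}$ has support of measure $\lesssim|J|^{-1}$ and $\|\widehat g\|_\infty\leq\|g\|_1$, one indeed gets $\|\mathrm{P}_I\Psi_{I,J}(\Delta_{L_I}(b_J)\ind_{3J})\|_{L^2}^2\lesssim|J|\langle|\Delta_{L_I}(b_J)|\rangle_{1,3J}^2$.

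The gap is in the middle step, where you combine these per-atom estimates into $\|h_I\|_{L^2}^2\lesssim\sum_J|J|\langle|\Delta_{L_I}(b_J)|\rangle_{1,3J}^2$. After telescoping $\psi_{I,|J|}=\sum_{k\geq\log_2|J|}(\psi_{I,2^k}-\psi_{I,2^{k+1}})$ and using orthogonality of the annular shells, the $k$-th shell operator $\widetilde{\Psi}_{I,k}$ acts on $\sum_{J:|J|\leq 2^k}\Delta_{L_I}(b_J)\ind_{3J}$, i.e.\ on atoms of \emph{all} scales up to $2^k$, and the Bernstein estimate $\|\widehat g\|_\infty\leq\|g\|_1$ discards both the phase decorrelation between $\widehat{\Delta_{L_I}(b_J)\ind_{3J}}$ for different $J$ and the decay of each of these Fourier transforms. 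A direct Bernstein bound on the $k$-th shell produces $2^{-k}\bigl(\sum_{|J|\leq 2^k}|J|\langle\cdot\rangle_{1,3J}\bigr)^2$, which is genuinely larger than the desired $\sum_J|J|\langle\cdot\rangle_{1,3J}^2$. The proposed remedy, ``the bounded spatial overlap of the $3J$'s at each fixed dyadic scale,'' does not close the gap either: the kernel of $\widetilde{\Psi}_{I,k}$ decays at spatial scale $2^k$, so after applying $\widetilde{\Psi}_{I,k}$ the atoms with $|J|\ll 2^k$ are spread to scale $2^k$ and the finite-overlap structure is lost. Estimating the cross terms $\langle\widetilde{\Psi}_{I,k}(f_J),\widetilde{\Psi}_{I,k}(f_{J'})\rangle$ for nearby $J,J'$ of scale $2^j\ll 2^k$ using only these ingredients gives a per-shell total of size $\simeq 2^j\sum_J\langle\cdot\rangle_{1,3J}^2$ with no decay in $k-j$, so the $k$-sum does not converge. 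The paper avoids this entirely: it proves the pointwise kernel estimate
$|\Psi_{I,J}(\Delta_{L_I}(b_J)\ind_{3J})(x)|\lesssim \M(\ind_J)(x)^{10}\langle|\Delta_{L_I}(b_J)|\rangle_{1,3J}$,
which retains both the frequency localization (through the amplitude $\simeq 1$ near $3J$) and the spatial localization (through the fast decay away from $J$). The sum over $J$ then closes directly via the Fefferman--Stein/Cauchy--Schwarz argument and the disjointness of the Calder\'on--Zygmund intervals, with no need to decouple the scales of $|J|$. You would need a genuinely new idea (a refined almost-orthogonality across both $k$ and $J$, or simply the pointwise kernel bound) to make the telescoping route work.
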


\begin{proof} First note that by the overlap assumption on the intervals $I$ we have
\[
\left\|{\mathcal E}(\{b_J\}_{J\in\mathcal J}) \right\|_{L^2(\R)} ^2 \lesssim N  \sum_{L\in\Lambda_\tau}\sum_{I:\,L_I=L}  |c_I|^2 \int_{\R} \left( \sum_{J:\, |J|\geq 2^{-k_I}} \left|\Psi_{I,J} (\Delta_{L} (b_J) \ind_{3J})\right|\right)^2.
\]
The following pointwise estimate can be routinely verified
\[
|\Psi_{I,J}(\Delta_{L}(b_J)\ind_{3J})(x)|\lesssim  \M(\ind_J)(x)^{10} \langle|\Delta_{L}( b_J)|\rangle_{1,3J}.
\]
Using the Fefferman-Stein inequality and rearranging the sums we can conclude that
\[
\begin{split}
 \left\|{\mathcal E}(\{b_J\}_{J\in\mathcal J}) \right\|_{L^2(\R)} ^2 & \lesssim N \sum_{k} \sum_{L\in\Lambda_\tau ^{2^k}}\,\sum_{\substack{I:\,L_I=L \\ \,\,\,\,\,\ell_I=\ell}} |c_I|^2 \sum_{|J|\geq 2^{-k}} \langle|\Delta_{L }( b_J)|\rangle_{1,3J} ^2 |J|
\\
&\leq  \sum_{J\in\mathcal J}|J| \sum_{L\in\Lambda_\tau  ^{|J|^{-1}}}  \langle|\Delta_{L } (b_J)|\rangle_{1,3J}^2
\end{split}
\]
where we used the $\ell^2$-control on the coefficients $\{c_I\}_{L_I=L}$  in passing to the second line. An appeal to the generalized Zygmund--Bonami inequality of order $\tau$ in Proposition~\ref{prop:genzygpos}  concludes the proof of the lemma.
\end{proof}

We are left with studying the contribution of the operator
\[
{\mathcal L} (\{b_J\}_{J\in\mathcal J}) \coloneqq \sum_I c_I \mathrm{P}_I \sum_{|J|\geq 2^{-k_I}} (\mathrm {Id}-\Psi_{I,J}) \left( \Delta_{L_I}(b_J)\ind_{3J}\right).
\]
For this we consider the multiplier $\zeta_{I,J}\coloneqq \ind_I (1-\psi_{I,J})$
which is a smooth function with values in $[0,1]$, supported in $I$, is identically $1$ on $|x-c_I|\lesssim |I|$ and drops to $0$ with derivative $O(|J|)$ close to the endpoints of $I$. More generally, one easily checks that $\zeta_{I,J}$ satisfies
\[
|\partial^\alpha \zeta_{I,J}| \lesssim |J|^\alpha \ind_{I_{\mathrm{left}}(J)\cup I_{\mathrm{right}}(J)}\qquad \forall \alpha\geq 1,
\]
where
\[
I_{\text{left}}(J)\coloneqq \left[\ell_I+10^{-1}|J|^{-1},\ell_I+5^{-1}|J|^{-1}\right]\subset I   
\]
and
\[
I_{\text{right}}(J)\coloneqq \left[r_I-5^{-1}|J|^{-1},r_I-10^{-1}|J|^{-1}\right]\subset I.
\]
Remembering that we are dealing with the case $|I||J|\gtrsim 1$ we see that the function $\zeta_{I,J}$ has support of size $O(|I|)$ and $\alpha$-derivatives of size $O(|J|^\alpha)$; thus the function $ \zeta_{I,J}$ is not a good kernel. The important observation is however that the derivatives of $\widecheck{\zeta_{I,J}}$ of order $\alpha \geq 1$ have support of size $|I_{\mathrm{left}}(J)\cup I_{\mathrm{right}}(J)|\simeq |J|^{-1}$.

Given an interval $J\subset \R$ we will also use an auxiliary function $\rho_J$ defined as follows. We choose $0\leq \rho\leq 1$ to be a smooth bump function which is identically $1$ on $[-1,1]$ and vanishes off $[-3/2,3/2]$ and define $\rho_J(x)\coloneqq \rho\left({x}/{|J|}\right)$ for $x\in\R$.

\begin{lemma}\label{lem:ngtmr} Let $I,J$ be intervals and $\zeta_{I,J}$ and $\rho_J$ be defined as above.

 If $\widecheck{\mu_{I,J}} \coloneqq (1-\rho_J)  \widecheck{\zeta_{I,J}}$ then for any nonnegative integers $\gamma,\beta$  there holds
\[
\left|\partial_\xi  ^\beta  \mu_{I,J}(\xi)\right|\lesssim |J|^{\beta}  \left(1+|J|\dist(\xi,\R\setminus I)\right)^{-\gamma} , \qquad \xi\in\R.
\]
\end{lemma}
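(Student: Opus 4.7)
The plan is to rewrite $\mu_{I,J}$ as a simple Fourier-analytic expression in $\zeta_{I,J}$ and then exploit the Schwartz decay of $\hat\rho_J$ at scale $|J|^{-1}$ away from the origin. Applying the Fourier transform to the defining relation $\widecheck{\mu_{I,J}} = (1-\rho_J)\widecheck{\zeta_{I,J}}$ and using $(1-\rho_J)^\wedge = \delta - \hat\rho_J$, we obtain
\[
\mu_{I,J} = \zeta_{I,J} - \hat\rho_J * \zeta_{I,J}.
\]
Differentiating $\beta$ times in $\xi$ and using $\int \hat\rho_J = \rho_J(0) = 1$ to symmetrize, this rewrites as
\[
\partial_\xi^\beta \mu_{I,J}(\xi) = \int_\R \hat\rho_J(y)\bigl(\partial_\xi^\beta \zeta_{I,J}(\xi) - \partial_\xi^\beta \zeta_{I,J}(\xi-y)\bigr)\, \d y.
\]

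Set $D \coloneqq \dist(\xi, \R\setminus I)$. The key observation is that when $\xi \in I$ with $D > (5|J|)^{-1}$, the integrand vanishes on $|y| \leq D - (5|J|)^{-1}$. Indeed, by the structure $\zeta_{I,J} = \ind_I(1-\psi_{I,J})$, the function $\zeta_{I,J}$ is identically $1$ on $\{\eta \in I : \dist(\eta,\partial I) > (5|J|)^{-1}\}$, and this set contains both $\xi$ and $\xi - y$ under the stated restriction on $|y|$. Consequently $\partial_\xi^\beta \zeta_{I,J}(\xi) = \partial_\xi^\beta \zeta_{I,J}(\xi-y) = 0$ for $\beta \geq 1$, while for $\beta = 0$ the difference is $1 - 1 = 0$. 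Combining this localization with the trivial pointwise bound $\|\partial_\xi^\beta \zeta_{I,J}\|_{L^\infty} \lesssim |J|^\beta$ and the Schwartz tail decay $|\hat\rho_J(y)| \lesssim_N |J|(1+|J||y|)^{-N}$ of $\hat\rho_J(y) = |J|\hat\rho(|J|y)$, we conclude
\[
|\partial_\xi^\beta \mu_{I,J}(\xi)| \lesssim |J|^\beta \int_{|y| \gtrsim D} |J|(1+|J||y|)^{-N}\, \d y \lesssim_N |J|^\beta (1+|J|D)^{-(N-1)}.
\]
Taking $N = \gamma + 1$ yields the claimed bound whenever $|J|D \gg 1$.

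For the complementary regime $|J|D \lesssim 1$ (which in particular includes every $\xi \notin I$, where $D = 0$), the claimed inequality reduces to $|\partial_\xi^\beta \mu_{I,J}(\xi)| \lesssim |J|^\beta$, and this follows immediately from the identity $\mu_{I,J} = \zeta_{I,J} - \hat\rho_J * \zeta_{I,J}$ together with $\|\partial_\xi^\beta \zeta_{I,J}\|_{L^\infty} \lesssim |J|^\beta$ and $\|\hat\rho_J\|_{L^1} \lesssim 1$. No serious obstacle is anticipated; the argument is essentially a standard calculation once the key identity $\mu_{I,J} = \zeta_{I,J} - \hat\rho_J * \zeta_{I,J}$ and the symmetrization trick are in place.
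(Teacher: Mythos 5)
Your argument is correct, and it takes a genuinely different route from the paper. The paper's proof works by a double integration by parts: first turning $\partial_\xi^\beta$ into multiplication by $x^\beta$ on the spatial side, and then turning the desired decay factor $(\xi-\ell_I)^{-\gamma}$ into the differential operator $(\partial_x - 2\pi i\ell_I)^\gamma$ acting on $\widecheck{\zeta_{I,J}}$, whose action it controls via the crucial fact that every nontrivial derivative of $\zeta_{I,J}$ is supported in the $O(|J|^{-1})$-sized sets $I_{\mathrm{left}}(J),I_{\mathrm{right}}(J)$ near $\partial I$. You instead never leave the frequency side: you write $\mu_{I,J}=\zeta_{I,J}-\hat\rho_J*\zeta_{I,J}$, symmetrize using $\int\hat\rho_J=\rho_J(0)=1$, and observe that $\zeta_{I,J}\equiv 1$ on the interior of $I$ at distance $>(5|J|)^{-1}$ from $\partial I$, so the difference $\partial^\beta\zeta_{I,J}(\xi)-\partial^\beta\zeta_{I,J}(\xi-y)$ vanishes for $|y|<\dist(\xi,\R\setminus I)-(5|J|)^{-1}$, after which the Schwartz decay $|\hat\rho_J(y)|\lesssim_N|J|(1+|J||y|)^{-N}$ finishes the job. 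Both arguments ultimately rest on the same two inputs --- localization of $\partial^{\geq 1}\zeta_{I,J}$ to an $O(|J|^{-1})$-neighborhood of $\partial I$, and the $|J|^{-1}$-concentration of $1-\rho_J$ / its Fourier transform --- but your version avoids the $x$-space estimate \eqref{eq:lambdader} entirely and is more elementary. It would also generalize more transparently to settings where explicit integration by parts identities are less convenient.
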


\begin{proof} We begin by noting that since $\zeta_{I,J}$ is a Schwartz function and $(1-\rho_J)$ is a smooth bounded function, we have  we have that $(1-\rho_J) \widecheck{\zeta_{I,J}}$ is a Schwartz function. Furthermore, by the comments preceding the statement of the lemma we have that $\zeta_{I,J}$ satisfies
\[
|\partial_\xi ^\alpha \zeta_{I,J}| \lesssim |J|^\alpha \ind_{I_{\mathrm{left}}(J)\cup I_{\mathrm{right}}(J)}\qquad \forall \alpha\geq 1.
\]
Note that by symmetry it suffices to prove the estimate for $\xi\in \R$ such that $\dist(\xi,\R\setminus I)=|\xi- \ell_I|$ where we remember that $I=[\ell_I,r_I]$. For simplicity we will write  $I(J)$ for $I_{\mathrm{left}}(J)$. Thus the conclusion of the lemma reduces to showing
\[
|\partial_\xi ^\beta   \mu_{I,J} (\xi )|\lesssim_\beta |J|^\beta  (1+|J||\xi-\ell_I|)^{-\gamma}.
\]
We will henceforth drop the subindices $I,J$ in order to simplify the notation. We record the following standard integration by parts identity; for nonnegative integers $\gamma,\nu$ we have
\[
\begin{split}
(\partial_x-2\pi i \ell_I) ^\gamma  [ \widecheck{\zeta}  (x) ] 
= (-1)^\nu\frac{(2\pi i )^{\gamma-\nu}}{x^\nu} \int_{\R} \partial_\xi  ^{\nu} \left[(\xi-\ell_I)^\gamma \zeta (\xi)\right] e^{2\pi i x \xi}\, \d \xi.
\end{split}
\]
In order to make sure that all terms in $\partial_\xi  ^{\nu} \left[(\xi-\ell_I)^\gamma \zeta (\xi)\right]$ contain at least one derivative we take $\nu > \gamma$. Then we have
\[
\begin{split}
\left|\partial_\xi  ^{\nu} \left[ (\xi-\ell_I)^\gamma \zeta (\xi)\right] \right| 
\lesssim \sum_{k=0} ^\gamma\left |\partial_\xi ^k (\xi-\ell_I)^\gamma \partial_\xi ^{\nu-k}[\zeta (\xi)]\right | 
& \lesssim \sum_{k=0} ^\gamma |\xi-\ell_I|^{\gamma-k} |J|^{\nu-k} \ind_{I(J)}(\xi)
\\
& \lesssim |J|^{\nu-\gamma}\ind_{I(J)}(\xi)
\end{split}
\]
provided that $\nu>\gamma$. Plugging this estimate into our integration by parts identity we get
\begin{equation}\label{eq:lambdader}
\left| (\partial_x-2\pi i \ell_I) ^\gamma   [ \widecheck{\zeta}  (x)]\right|\lesssim \frac{|J|^{\nu-\gamma-1}}{|x|^{\nu}},\qquad \nu>\gamma.
\end{equation}
Using this estimate we have for nonnegative integers $\beta,\gamma$
\[
\begin{split}
 \partial_\xi  ^\beta [\mu(\xi)]&=\frac{ (-2\pi i)^\beta  }{(2\pi i (\xi-\ell_I))^\gamma}\int_{\R}  (\partial_x-2\pi i \ell_I) ^\gamma \left[x ^\beta (1-\rho (x/|J|)) \widecheck{\zeta}  (x) \right] e^{-2\pi i x \xi}  \, d x.
\end{split}
\]
Using \eqref{eq:lambdader} with $\nu$ large together with the fact that $\supp(1-\rho_J)\subset \{|x|\gtrsim |J|\}$ and that $\supp (\partial_x[\rho_J]) \subset \{|x|\simeq |J|\}$ and combining with the previous identity yields
\[
\begin{split}
&\left|\partial_\xi ^\beta   [\mu(\xi)] \right|\lesssim \frac{1}{|\xi-\ell_I|^\gamma} \int_{\R} \sum_{\substack{k_1+k_2+k_3=\gamma\\k_1\leq \beta}} |x|^{\beta-k_1} \left|\partial_x ^{k_2} (1-\rho (x/|J|))(\partial_x -2\pi i \ell_I)^{k_3} [\widecheck{\zeta}(x)]\right| \, \d x
\\
&\quad \leq  \sum_{\substack{k_1+k_3=\gamma\\k_1\leq \beta }} \frac{|J|^{\nu-k_3-1}}{|\xi-\ell_I|^\gamma}  \int_{|x|\gtrsim |J|} |x|^{\beta-k_1-\nu}  \, \d x
\\
&\qquad\qquad+ \sum_{\substack{k_1+k_2+k_3=\gamma\\k_1\leq \beta,\, k_2\geq 1 }}\frac{|J|^{\nu-k_3-1}}{|\xi-\ell_I|^\gamma} \int_{|x|\simeq |J|}  |x|^{\beta-k_1-\nu} |J|^{-k_2}  \, \d x
\\
&\quad \lesssim \frac{|J|^{\beta-\gamma}}{|\xi-\xi_J|^\gamma}.
\end{split}
\]
Combining this estimate for general $\gamma$ with the special case $\gamma=0$ yields the conclusion of the lemma.
\end{proof}

We can now prove the desired estimate for the remaining term.

\begin{lemma}\label{lem:loc_middle}There holds
\[
\int_{\R\setminus \cup_{J\in\mathcal J} 6J} |{\mathcal L} (\{b_J\}_{J\in\mathcal J})| \lesssim \sum_{J\in\mathcal J}|J| \langle |b_J|\rangle_{B_{\tau/2},J}  .
\]
\end{lemma}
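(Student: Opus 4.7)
The plan is to reduce the lemma to an $L^1$ kernel-norm estimate and then sum appropriately using the generalized Zygmund--Bonami inequality of Proposition~\ref{prop:genzygpos}. The starting observation is a spatial localization argument: since $\rho_J$ is supported in $[-\tfrac{3}{2}|J|,\tfrac{3}{2}|J|]$, the kernel $\rho_J\widecheck{\zeta_{I,J}}$ is supported in this same interval, and convolving it against $\Delta_{L_I}(b_J)\ind_{3J}$ produces a function supported in $3J+[-\tfrac{3}{2}|J|,\tfrac{3}{2}|J|]\subset 6J$. Therefore on $\R\setminus \cup_J 6J$ one may replace each $\mathrm{P}_I(\mathrm{Id}-\Psi_{I,J})$ applied to $\Delta_{L_I}(b_J)\ind_{3J}$ by the multiplier operator $M_{I,J}\coloneqq \mathrm{T}_{\mu_{I,J}}$.

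Next I would cash in Lemma~\ref{lem:ngtmr}. Integration by parts in $\xi$ against $e^{2\pi i x\xi}$, combined with $\|\partial^\beta\mu_{I,J}\|_{L^1(\R)}\lesssim |J|^{\beta-1}$ (which follows by integrating the estimate $|\partial^\beta\mu_{I,J}(\xi)|\lesssim |J|^\beta(1+|J|\dist(\xi,\partial I))^{-\gamma}$, noting that $\mu_{I,J}$ is essentially localized at scale $|J|^{-1}$ around the two endpoints of $I$), yields the pointwise kernel estimate
\[
|\widecheck{\mu_{I,J}}(x)|\lesssim |J|^{-1}\bigl(1+|x|/|J|\bigr)^{-\gamma}
\]
for any $\gamma\geq 1$. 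In particular $\|\widecheck{\mu_{I,J}}\|_{L^1(\R)}\lesssim 1$ uniformly in $I,J$. Combined with the fact that for $y\in 3J$ and $x\in \R\setminus 6J$ we have $|x-y|\gtrsim |J|$, this gives the integral estimate
\[
\int_{\R\setminus 6J} |M_{I,J}(\Delta_{L_I}(b_J)\ind_{3J})|\,\d x\lesssim \|\Delta_{L_I}(b_J)\|_{L^1(3J)}\lesssim |J|\langle|\Delta_{L_I}(b_J)|\rangle_{1,3J}.
\]

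The remaining step is to sum this estimate over $I$ and $J$. Grouping by $L=L_I$, and using the $\ell^2$-bound $\sum_{I:L_I=L}|c_I|^2\leq N^{-1}$, I would apply a Cauchy--Schwarz in $L$ that pairs the square function estimate for $\langle|\Delta_L b_J|\rangle_{1,3J}$---controlled via Proposition~\ref{prop:genzygpos} (with $\sigma=0$) by $\langle|b_J|\rangle_{B_{\tau/2},J}$---against the contribution from the $c_I$-coefficients. The delicate point, which I expect to be the main obstacle, is handling the naive per-$L$ coefficient sum $T_{L,J}\coloneqq \sum_{I:L_I=L,\,|I|\geq |J|^{-1}}|c_I|$: the $N$-overlap assumption on $\mathcal I$ bounds the number of admissible $I$ per dyadic scale inside $L$, and combining this with the $\ell^2$-smallness $\sum|c_I|^2\leq N^{-1}$ through a dyadic-scale-split Cauchy--Schwarz is what allows one to absorb $T_{L,J}$ into the Zygmund--Bonami square function bound and arrive at $|J|\langle|b_J|\rangle_{B_{\tau/2},J}$. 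Summing the resulting bound over $J\in\mathcal J$ then yields the claim.
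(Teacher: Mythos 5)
Your spatial localization (inserting $1-\rho_J$ on $\R\setminus 6J$), the kernel bound $|\widecheck{\mu_{I,J}}(x)|\lesssim |J|^{-1}(1+|x|/|J|)^{-\gamma}$ derived from Lemma~\ref{lem:ngtmr}, and the resulting per-term $L^1$ estimate $\int_{\R\setminus 6J}|M_{I,J}(\Delta_{L_I}(b_J)\ind_{3J})|\lesssim |J|\langle|\Delta_{L_I}(b_J)|\rangle_{1,3J}$ are all correct. The gap is in the final summation, and it is not one that a ``dyadic-scale-split Cauchy--Schwarz'' can close. Once you estimate each $(I,J)$ contribution in $L^1$ separately, the best you can say about
\[
T_{L,J}\coloneqq\sum_{I:\,L_I=L,\ |I|\geq |J|^{-1}}|c_I|
\]
under the constraints $\sum_{I:L_I=L}|c_I|^2\leq N^{-1}$ and $N$-overlap is $T_{L,J}\lesssim (|L||J|)^{1/2}$: at each dyadic scale $2^k\in[|J|^{-1},|L|]$ there are at most $O(N|L|2^{-k})$ admissible $I$, so Cauchy--Schwarz per scale gives $T_{L,J,k}\lesssim (|L|2^{-k})^{1/2}$ and the geometric sum over $k$ gives $(|L||J|)^{1/2}$ (and this is achieved when all $I$ have length $\simeq|J|^{-1}$ with equal coefficients). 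Feeding this into $\sum_L T_{L,J}\cdot|J|\langle|\Delta_L(b_J)|\rangle_{1,3J}$ and Cauchy--Schwarzing against the Zygmund--Bonami square function $\bigl(\sum_L\langle|\Delta_L(b_J)|\rangle_{1,3J}^2\bigr)^{1/2}$ leaves you with the factor $\bigl(\sum_{L\in\Lambda_\tau^{|J|^{-1}}}|L||J|\bigr)^{1/2}$, which is divergent. The obstruction is structural: your route estimates each $I$ in $L^1$ before summing, thereby discarding the $L^2$-orthogonality between the different $I$'s with $L_I=L$, and only an $\ell^1$ control on $\{c_I\}$---which you do not have---could save the naive sum.

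The paper's proof is built precisely to retain that orthogonality. Rather than taking $L^1$ norms term by term, it first applies Cauchy--Schwarz in $x$ with the weight $|x|^2$ (using $\int_{|x|>3|J|}|x|^{-2}\,\d x\simeq|J|^{-1}$), then passes to the frequency side by Plancherel, turning the $|x|^2$ weight into a $\partial_\xi$ derivative acting on $\sum_I c_I\,\mu_{I,J}\widehat{F_{I,J}}$. The crucial gain is that $\partial_\xi\mu_{I,J}$ is supported on $I_{\mathrm{left}}(J)\cup I_{\mathrm{right}}(J)$, a set of size $\simeq|J|^{-1}$ rather than $|I|$, and these endpoint neighborhoods are $N$-overlapping across $I$. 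The Fefferman--Stein and Cauchy--Schwarz steps then genuinely convert the coefficient sum into the available $\ell^2$ quantity $\sum_I|c_I|^2$, producing $A_J\lesssim |J|\bigl(\sum_L\langle|\Delta_L(b_J)|\rangle_{1,3J}^2\bigr)^{1/2}$, which Proposition~\ref{prop:genzygpos} closes. Without such an $L^2$-level argument before summing in $I$, your estimate cannot be made to close.
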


\begin{proof} For convenience we set
\[
{\mathrm L}_{I,J}\coloneqq  \mathrm{P}_I({\mathrm Id}-\Psi_{I,J}),\quad F_{I,J}\coloneqq \Delta_{L_I}(b_J)\ind_{3J},\,\,\,\,\,\,\, {\mathcal L} (\{b_J\}_{J\in\mathcal J}) = \sum_I\sum_{J:\, |J|\geq 2^{-k_I}}  c_I {\mathrm L}_{I,J}( F_{I,J}) .
\]
We immediately note that it will be enough to prove the desired estimate for a single $b_J$ and then sum the estimates. Furthermore, by translation and scale invariance it will be enough to to assume that $J=[-|J|/2,|J|/2]$; here we critically use that the operator $L_{I,J}$ depends only on the length and not on the position of $J$. The left hand side in the conclusion of the lemma for a single such $b_J$ can be estimated by
\[
\begin{split}
A_J&\coloneqq \int_{|x|\geq 3|J|}\left | \sum_{I:\, 2^{k_I}\geq |J|^{-1}} c_I {\mathrm L}_{I,J}F_{I,J}\right|  = \int_{|x|\geq 3|J|} \left|\sum_{ I:\, 2^{k_I}\geq |J|^{-1}} c_I (\widecheck{\zeta_{I,J}}*F_{I,J})(x) \right| \, \d x
\\
 & \lesssim |J|^{-1/2}\left(\int_{|x|>3|J|}|x|^2 \left| \sum_{I:\,2^{k_I}\geq 1} c_I(\widecheck{\zeta_{I,J}}* {F_{I,J}} ) (x)\right|^2\, \d x\right)^{\frac12}.
  \end{split}
\]
 Now let $\rho$ be as before. It is then the case that for $|x|\geq 3$ and $|y|\leq 3/2$ we have 
\[
|x-y|\geq \frac{1}{2}|x| \geq \frac{3}{2},\qquad 1-\rho\left( \frac{x-y}{|J|} \right) = 1 - \rho_J(x-y)=1
\]
for such pairs $(x,y)$. As $F_{I,J}$ is supported in $[-3|J|/2,3|J|/2]$ we have for all $|x|\geq 3|J|$ that
\[
\begin{split}
(\widecheck{\zeta_{I,J}}*F_{I,J})(x)=\int_{[-3|J|/2,\,3|J|/2]} F_{I,J}(y)\widecheck{\zeta_{I,J}}(x-y)\left(1-\rho_J(x-y))\right)\, \d y.
\end{split}
\]
Using this identity and setting $\widecheck{\mu_{I,J}}\coloneqq (1-\rho_J)\widecheck{\zeta_{I,J}}$ we get
\[
\begin{split}
A_J&\lesssim \left(\int_{\R}  \left| \sum_{I:\, 2^{k_I}\geq |J|^{-1}} c_I   \partial_{\xi}\left[ \widehat{F_{I,J}} (\xi) \mu_{I,J}(\xi)\right]  \right|^2\, \d \xi\right)^{\frac12}.
\end{split}
\]
Using the elementary estimates $\|\widehat{F_{I,J}}\|_{L^\infty(\R)}\leq \|F_{I,J}\|_{L^1(\R)}$ and $\|\partial_\xi \widehat{F_{I,J}}\|_{L^\infty(\R)}\leq|J| \|F_{I,J}\|_{L^1(\R)}$, together with the estimate of Lemma~\ref{lem:ngtmr} for $\beta\in\{0,1\}$ we get for $\gamma$ a large positive integer of our choice
\[
\partial_\xi ^\beta [\mu_{I,J}(\xi)]\lesssim \frac{|J|^\beta}{(1+|J|\dist(\xi,\R\setminus I) )^\gamma}\lesssim |J|^\beta \M(\ind_{I_{\mathrm{left}}(J)\cup I_{\mathrm{right}}(J)})^\gamma .
\]
Hence, by using the Fefferman-Stein inequality, the Cauchy-Schwarz inequality, and the $N$-overlap assumption on the intervals $I$, we get
\[
A_J  \lesssim   |J|^{-1/2} \left(\sum_{2^{k_I}\geq |J|^{-1}} |c_I|^2 |J|^4 \langle|\Delta_{L_I}(b_J)|\rangle_{1,3J} ^2    N \left|I_{\mathrm{left}}(J)\cup I_{\mathrm{right}}(J)\right|\right)^{\frac12}
\]
Here note that we use that  
$I_{\mathrm{left}}(J) \cup I_{\mathrm {right}} (J) \subsetneq I$ by construction, and hence 
\[
\sum_{2^k_I\geq |J|^{-1}}  \ind_{I_{\mathrm{left}}(J)\cup I_{\mathrm{right}}(J)}  \leq \sum_{I} \ind_I \leq N .
\]
 Further, using also the control on the $\ell^2$-norm of the sequence $\{c_I\}_I$ yields
\[
\begin{split}
A_J &\lesssim   |J|^{-1/2}\left(\sum_{L\in \Lambda_\tau ^{|J|^{-1}}} |J|^3 \langle|\Delta_{L} (b_J)|\rangle_{1,3J} ^2 \right)^{1/2}= |J| \left(\sum_{L\in\Lambda_\tau ^{|J|^{-1}} } \langle |\Delta_{L} (b_J)|\rangle_{1, 3J } ^2\right)^{1/2}
\\
&\lesssim |J|\langle|b_J|\rangle_{B_{\tau/2},J}
\end{split}
\]
by the generalized Zygmund--Bonami inequality of Proposition~\ref{prop:genzygpos}. This concludes the proof of the lemma.
\end{proof}

Using Lemmas~\ref{lem:local_sing} and~\ref{lem:loc_middle} we complete the estimate for the term $\mathrm{III}$ and with that the proof of the endpoint bound of the theorem. 

\subsubsection{Optimality in Theorem~\ref{thm:main}}\label{sec:optr2tau} We briefly comment on the optimality of the Young function $t\mapsto t(\log(e+t))^{\tau/2}$ in the upper bound of the theorem. Suppose that  $r>0$ is such that whenever  $\mathrm T_m$ is an $R_{2,\tau}$ multiplier operator then the bound of Theorem~\ref{thm:main} holds with $r$ in the place of $\tau$. Since $\mathrm T_m$ is $L^2$-bounded, it follows by a Marcinkiewicz interpolation type of argument that the $L^p(\R)$-bounds for the Littlewood--Paley square function $\mathrm {LP}_\tau$ of order $\tau$ can be estimated by
\[ 
\|\mathrm{LP}_\tau \|_{p\to p} \lesssim \left(\mathbf E\left\| \sum_{L\in\Lambda_\tau} \eps_{L} \mathrm P_{L}\right\|_{p\to p}^p\right)^{1/p} \leq \sup_{\|m\|_{R_{2,\tau}}=1} \|\mathrm T_m \|_{L^p\to L^p } \lesssim (p-1)^{-(r+1)} 
\]
as $p \rightarrow 1^+$, where the expectation in the display above is over independent choices of random signs $\{\eps_{L}\}_{L}$. However, a modification of an example in \cite{Bou}, see \cite[{\S 3}]{Bakas_L^p}, shows that the estimate in the display above does not hold for $r< \tau/2 $. This argument also shows that our theorem implies that the $L^p$ bounds for $R_{2,\tau}$ multipliers are $O( \max(p,p')^{1+\tau/2})$.

Alternatively, sharpness can also be obtained by adapting the corresponding argument in \cite[{\S3.2}]{TW} to the higher order case. Let us briefly outline the second order case. For a smooth function $\psi$ supported in $[-1/2,1/2]$ with $\psi (0) = 1$ and $(k,l)\in\Z^2$ with $k>l$ we consider the multiplier $m_{(k,l)}$ given by
\[
m_{(k,l)} (\xi ) \coloneq m_0 \left(  \frac{\xi - 2^k}{2^{l-1}} \right), \quad \text{where} \quad m_0 (\xi) \coloneqq \psi(\xi-1)\ind_{[1,\infty)} (\xi), \qquad \xi \in \mathbb{R}.
\]
One then has
\[
\widecheck{m_{(k,l)}} ( x ) = \frac{e^{i 2 \pi 2^k x}}{i 2 \pi x} + O \left( \frac{1}{2^l |x|} \right), \qquad |x| \gtrsim 2^{-l}. 
\]
For $N \in \mathbb{N}$, that will be eventually sent to infinity, we consider the second order $\ell^2$-valued multiplier operator
\[
 T_N (g)  \coloneq    \left\{ T_{m_{(k,l)}} (g) \right\}_{ 1 \leq l < k \leq N } . 
\]
Consider a smooth function $f$ such that  $\widehat{f}$ is supported in $ [-4, 4]$ and $\widehat{f} (\xi) = 1$ for all $\xi \in [-2, 2]$. We then set $f_N (x) \coloneq 2^N f (2^N x)$, $x \in \mathbb{R}$. For $r>0$ we have
\[
\| T_N (f_N) (x) \|_{\ell^2}  \gtrsim \frac{N}{|x|} \qquad \text{if}\quad |x| \geq 2^{-5N/8} 
\]
and
\[
\int_\mathbb{R} \frac{|f_N (x)|}{\alpha} \left(\log \left(e+\frac{|f_N (x)|}{\alpha}\right)\right)^r \, \d x \lesssim \frac{1}{\alpha} \left(\log  \left(e+\frac{2^N}{\alpha} \right) \right)^r\qquad \text{for}\quad \alpha>0.
\]
Hence, if we choose $\alpha = 2^{5N/8}$ then $ \alpha^{-1} \left(\log (e+\alpha^{-1} 2^N )\right)^r \simeq 2^{-5N/8} N^r$ and
\[
\begin{split}
 \left|\left \{ x \in [-1/2,1/2] : \, \left \| T_N (f_N) (x) \right\|_{\ell^2}  > \alpha \right\} \right| &\geq  \left| \left\{   2^{-5N/8}  \leq x \leq 1/4 :\, \frac{ N }{|x|} \gtrsim 2^{5N/8} \right\} \right| 
 \\
 &\simeq N \alpha^{-1}  .
 \end{split}
 \]
To complete the proof, define $g_N \coloneqq f_N \chi_{[1/2,1/2]}$ so that  $g_N$ is supported in $[-1/2,1/2] $ and $\| g_N \|_{L \log^r L ([-1/2,1/2])} \lesssim N^r$. Moreover, for all $1 \leq l < k \leq N$ one has
\[
 | T_{m_{k,l}} (f_N -g_N) (x) | \lesssim 2^{-2N} \qquad \text{for all}\quad    x \in [ 2^{-5N/8}  , 1/4]
 \]
and hence
\[
\left\|\left \| T_N (g_N)\right \|_{\ell^2} \right\|_{L^{1,\infty} ([-1/2,1/2])} \gtrsim N. 
\]
It follows from Khintchine's inequality that there exists a choice of signs $\eps_{k,\ell}$, depending on $g_N$, such that
\[
\left\| \sum_{1\leq l <k \leq N} \eps_{k,\ell} T_{m_{k,l}} (g_N)  \right\|_{L^{1,\infty} ([-1/2,1/2])} \gtrsim N 
\]
from which it follows that $r \geq 1=\tau/2$.

\subsection{Proof of Theorems~\ref{cor:LPsf} and~\ref{thm:horm}}\label{sec:cormult} We begin by explaining the modifications needed in order to obtain a proof of the endpoint bounds in Theorems~\ref{cor:LPsf} and~\ref{thm:horm}. 

\subsubsection{Proof of Theorem~\ref{thm:Marcink}} Since Marcinkiewicz multipliers of order $\tau$ are contained in the class $R_{2,\tau}$, we only need to briefly discuss the conclusion of Theorem~\ref{thm:Marcink} for the Littlewood--Paley square function. Note that the proof of Theorem~\ref{thm:main} relies on $L^2(\R)$ estimates and $L^1$-type estimates. Then we can repeat the proof for the operator
\[
|\mathrm {LP}_{\tau}f| \simeq\mathbf E\left|\sum_{L\in\Lambda_\tau} \eps_{L} \mathrm P_{L}f\right|\simeq \left(\mathbf E\left|\sum_{L\in\Lambda_\tau} \eps_{L} \mathrm P_{L}f\right|^2\right)^{1/2}
\]
using the first approximate equality whenever $L^1$-estimates are needed, and the second one for the $L^2$-estimates. We omit the  details. The optimality follows by the discussion in \S\ref{sec:optr2tau}. 

\subsubsection{Proof of Theorem~\ref{thm:horm}} We proceed to prove Theorem~\ref{thm:horm} concerning endpoint bounds for higher order H\"ormander-Mihlin multipliers and smooth Littlewood--Paley square functions, which requires just small modifications compared to the proof of Theorem~\ref{thm:main}. Consider a positive integer $\tau$ and $f\in L^{B_{\tau-1}}$; we apply the Calder\'on-Zygmund decomposition of Proposition~\ref{prop:CZ_decomp} with $\sigma=\tau-1$ at some fixed level $\alpha>0$ to write $f=g+b_{\mathrm{canc},\tau-1}+b_{\mathrm{lac},\tau-1}$ and let $\mathcal J$ be the collection of stopping intervals. The good part $g+b_{\mathrm{lac},\tau-1}$ is estimated in $L^2$ by the $L^2$-bounds of the operator $\mathrm T_m$, using that 
\[
 \left\|g+b_{\mathrm{lac},\tau-1}\right\|_{L^2(\R)} ^2 \lesssim \alpha^2 \int_\R B_{(\tau-1)/2}\left(\frac{|f|}{\alpha}\right)
\]
by the Calder\'on-Zygmund decomposition. As before, it remains to estimate the part of the operator acting on the cancellative atoms. We consider a partition of unity $\{\widetilde{\phi}_L\}_{L\in\Lambda_{\tau-1}}$ subordinated to the collection of Littlewood--Paley intervals $\Lambda_{\tau-1}$, with $\widetilde{\phi}_{L}\in\Phi_{L,M}$ for each $L$. We set
\[
\widetilde{\Delta}_L (g)\coloneqq \left(\widetilde{\phi}_{L} \hat g\right)^\vee,\qquad \mathrm{Id}=\sum_{L\in\Lambda _\sigma}\widetilde{\Delta}_L .
\]  
With $\Delta_{L}$ the smooth Littlewood--Paley projections as fixed in \S\ref{sec:lacnot} we have $\Delta_{L}\widetilde{\Delta}_L=\widetilde{\Delta}_L$. We have thus the decomposition
\[
\mathrm{T}_m=\sum_{L\in\Lambda_{\tau-1}} \mathrm{T}_m \widetilde{\Delta}_L\eqqcolon \sum_{L\in\Lambda_{\tau-1}} \mathrm{T}_L=\sum_{L\in\Lambda_{\tau-1}} \mathrm{T}_L \Delta_L
\]
and let $\zeta_{L}$ denote the Fourier multiplier of the operator $\mathrm{T}_{L}$. We then estimate
\[
\begin{split}
\mathrm{T}_m\left(\sum_J b_J\right)&=\sum_{L\in \Lambda_{\tau-1}}\mathrm{T}_{L} \left(\sum_{J:\, |J|\geq  |L|^{-1}}{\Delta_{L}}( b_J)\ind_{\R\setminus 3J}\right)+
\sum_{L\in\Lambda_{\tau-1}} \mathrm{T}_L \left(\sum_{J:\, |J|<  |L|^{-1}}\Delta_L( b_J)\right)
\\
&\qquad +\sum_{L\in\Lambda_{\tau-1}} \mathrm{T}_L\left(\sum_{J:\, |J|\geq  |L|^{-1}} \Delta_L( b_J)\ind_{3J}\right)\eqqcolon \mathrm{I}+\mathrm{II}+\mathrm{III}.
\end{split}
\]
As in the proof of Theorem~\ref{thm:main}, Remarks~\ref{rmrk:molecule_pos} and~\ref{rmrk:molecule_canc} take care of the terms $\mathrm{I}$ and $\mathrm{II}$, respectively, by using $L^2$-bounds for each $\mathrm T_L$ and $L^2$-orthogonality for smooth Littlewood--Paley projections of order $\tau$. Once again the main term is $\mathrm{III}$.

We will split $\mathrm{III}$ into two parts, which are defined in the same way as the operators $\mathcal E$ and $\mathcal L$ from the proof of Theorem~\ref{thm:main} with the role of the interval $I$ being replaced by an interval $L\in\Lambda_{\tau-1}$. For the first part consider for each $L,J$ the function $\psi_{L,J}$ as defined before the proof of Lemma~\ref{lem:local_sing}. Defining
\[
{\mathcal E}(\{b_J\}_{J\in \mathcal J})\coloneqq \sum_{L\in \Lambda_{\tau-1} } \mathrm T_L \left(\sum_{J:\, |J|\geq  |L|^{-1}} \Psi_{L,J}\left({\Delta_L}( b_J)\ind_{3J}\right)\right),
\]
and following the same steps as in the proof of Lemma~\ref{lem:local_sing}, we get
\[
\left|\left\{\left|{\mathcal E}(\{b_J\}_{J\in \mathcal J})\right|>\alpha \right\}\right|\lesssim \frac{1}{\alpha^2}\sum_{J\in\mathcal J}|J|\sum_{L\in\Lambda_{\tau-1}^{|J|^{-1}}}\langle| \Delta_L( b_J)|\rangle_{1,3J}^2\lesssim  \int_{\R}B_{(\tau-1)/2}\left(\frac{|f|}{\lambda}\right)
\]
by the generalized Zygmund--Bonami inequality of Proposition~\ref{prop:genzygpos} and the properties of the Calder\'on-Zygmund decomposition. It remains to deal with the operator
\[
{\mathcal L} (\{b_J\}_{J\in\mathcal J}) \coloneqq \sum_{ L\in\Lambda_{\tau-1}}   \sum_{|J|\geq |L|^{-1}}\mathrm{T}_{L}\left(\mathrm{Id}-\Psi_{ L,J}\right)\left(\Delta_{L}(b_J)\ind_{3J}\right).
\]
Letting $\zeta_{L,J}$ be the Fourier multiplier of the operator $\mathrm{T}_{L}(\mathrm{Id}-\Psi_{L,J})$ and  $\rho_J$ as in the statement of Lemma~\ref{lem:ngtmr} we set $\widecheck{\mu_{L,J}} \coloneqq (1-\rho_J)  \widecheck{\zeta_{L,J}}$. Lemma~\ref{lem:ngtmr} for $I=L\in\Lambda_{\tau-1}$ yields the estimate
\begin{equation}\label{eq:derivatives}
\left|\partial_\xi  ^\beta  \mu_{L,J}(\xi)\right|\lesssim |J|^{\beta}  \left(1+|J|\dist(\xi,\R
\setminus L)\right)^{-\gamma} , \qquad \xi\in\R.
\end{equation}
The proof for the operator $\mathcal L$ is then completed in the by now usual way. First we have
\[
\begin{split}
 \left|\left\{\left|{\mathcal L} (\{b_J\}_{J\in\mathcal J}) \right|>\alpha\right\}\right| \leq\frac{1}{\alpha} \sum_{J\in\mathcal J}|J|^{-1/2} \left\| \partial_\xi\left(\sum_{L\in\Lambda_{\tau-1} ^{|J|^{-1}}} \mu_{L,J}\widehat{F_{L,J}}\right)\right\|_{L^2(\R)} 
\end{split}
\]
with $F_{L,J}\coloneqq \widetilde{\Delta}_{L}(b_J)\ind_{3J}$. Now \eqref{eq:derivatives} implies that
\[
\left| \partial_\xi\left(\sum_{L\in\Lambda_\sigma ^{|J|^{-1}}} \mu_{L,J}\widehat{F_{L,J}}\right)\right|\lesssim |J|^2\M(\ind_{L_J})^\gamma \langle|\widetilde{\Delta}_{L}(b_J)|\rangle_{1,3J}
\]
where $L_{ J}\coloneqq [\xi_L +(10|J|)^{-1},\xi_L +(5|J|)^{-1}]\subset L$. The estimates above together with the Fefferman-Stein inequality, the Cauchy-Schwarz  inequality and the generalized Zygmund--Bonami inequality complete the estimate for the operator $\mathcal{L}$ and with that the upper bound of Theorem~\ref{thm:horm} for H\"ormander-Mihlin multipliers of order $\tau$. The proof for the smooth Littlewood--Paley square function of order $\tau$ follows the same randomization argument as the one used in the proof of Theorem~\ref{thm:Marcink}.

Finally, the optimality of the power $(\tau-1)/2$ on the endpoint inequality can be checked by testing a local endpoint $L\log^{r}L\to L^{1,\infty}$ inequality for the smooth Littlewood--Paley square function of order $\tau$ on a smooth bump function supported in a small neighborhood of the origin. A routine calculation shows that necessarily $r\geq (\tau -1 )/2$. Note also that a local $L\log^{r}L\to L^{1,\infty}$ bound for H\"ormander-Mihlin multipliers implies the corresponding endpoint square function estimate by a randomization argument as in \cite{Bakas}.

\subsection*{Acknowledgements} 

O. Bakas would like to thank Jim Wright for several discussions concerning topics related to this work. V. Ciccone and M. Vitturi thank BCAM - Basque Center for Applied Mathematics and the
University of the Basque Country UPV/EHU for their warm hospitality during their respective research visits. The authors would like to thank the anonymous referees for an expert reading and suggestions to improve the paper.

\begin{bibsection}
 \begin{biblist}

\bib{Bakas}{article}{
   author={Bakas, Odysseas},
   title={Endpoint mapping properties of the Littlewood-Paley square
   function},
   journal={Colloq. Math.},
   volume={157},
   date={2019},
   number={1},
   pages={1--15},
   issn={0010-1354},
   review={\MR{3956881}},
   doi={10.4064/cm7396-4-2018},
}

\bib{Bakas_L^p}{article}{
   author={Bakas, Odysseas},
   title={Sharp asymptotic estimates for a class of Littlewood-Paley
   operators},
   journal={Studia Math.},
   volume={260},
   date={2021},
   number={2},
   pages={195--206},
   issn={0039-3223},
   review={\MR{4289669}},
   doi={10.4064/sm200514-6-10},
}

\bib{Bonami}{article}{
   author={Bonami, Aline},
   title={\'{E}tude des coefficients de Fourier des fonctions de
   $L\sp{p}(G)$},
   language={French, with English summary},
   journal={Ann. Inst. Fourier (Grenoble)},
   volume={20},
   date={1970},
   pages={335--402 (1971)},
   issn={0373-0956},
   review={\MR{0283496}},
}

\bib{Bou}{article}{
   author={Bourgain, J.},
   title={On the behavior of the constant in the Littlewood-Paley
   inequality},
   conference={
   title=
   {Geometric aspects of functional analysis (1987--88)},
   },
   book={      
   series={Lecture Notes in Math.},
      volume={1376},
      publisher={Springer, Berlin},
   },
   isbn={3-540-51303-5},
   date={1989},
   pages={202--208},
   review={\MR{1008724}},
   doi={10.1007/BFb0090056},
}

\bib{CWW}{article}{
   author={Chang, S.-Y. A.},
   author={Wilson, J. M.},
   author={Wolff, T. H.},
   title={Some weighted norm inequalities concerning the Schr\"{o}dinger
   operators},
   journal={Comment. Math. Helv.},
   volume={60},
   date={1985},
   number={2},
   pages={217--246},
   issn={0010-2571},
   review={\MR{0800004}},
   doi={10.1007/BF02567411},
}

\bib{CRS}{article}{
   author={Coifman, Ronald},
   author={Rubio de Francia, Jos\'{e} Luis},
   author={Semmes, Stephen},
   title={Multiplicateurs de Fourier de $L^p({\bf R})$ et estimations
   quadratiques},
   language={French, with English summary},
   journal={C. R. Acad. Sci. Paris S\'{e}r. I Math.},
   volume={306},
   date={1988},
   number={8},
   pages={351--354},
   issn={0249-6291},
   review={\MR{0934617}},
}
 
\bib{UMP}{book}{
   author={Cruz-Uribe, David V.},
   author={Martell, Jos\'{e} Maria},
   author={P\'{e}rez, Carlos},
   title={Weights, extrapolation and the theory of Rubio de Francia},
   series={Operator Theory: Advances and Applications},
   volume={215},
   publisher={Birkh\"{a}user/Springer Basel AG, Basel},
   date={2011},
   pages={xiv+280},
   isbn={978-3-0348-0071-6},
   review={\MR{2797562}},
   doi={10.1007/978-3-0348-0072-3},
}

\bib{duo}{book}{
   author={Duoandikoetxea, Javier},
   title={Fourier analysis},
   series={Graduate Studies in Mathematics},
   volume={29},
   note={Translated and revised from the 1995 Spanish original by David
   Cruz-Uribe},
   publisher={American Mathematical Society, Providence, RI},
   date={2001},
   pages={xviii+222},
   isbn={0-8218-2172-5},
   review={\MR{1800316}},
   doi={10.1090/gsm/029},
}

\bib{Gaudry}{article}{
   author={Gaudry, Garth I.},
   title={Littlewood-Paley theorems for sum and difference sets},
   journal={Math. Proc. Cambridge Philos. Soc.},
   volume={83},
   date={1978},
   number={1},
   pages={65--71},
   issn={0305-0041},
   review={\MR{0470576}},
   doi={10.1017/S0305004100054293},
}

\bib{Lerner}{article}{
   author={Lerner, Andrei K.},
   title={Quantitative weighted estimates for the Littlewood-Paley square
   function and Marcinkiewicz multipliers},
   journal={Math. Res. Lett.},
   volume={26},
   date={2019},
   number={2},
   pages={537--556},
   issn={1073-2780},
   review={\MR{3999554}},
   doi={10.4310/MRL.2019.v26.n2.a7},
}
\bib{M}{article}{
 author = {Marcinkiewicz, J.},
 title = {Sur les multiplicateurs des s{\'e}ries de {Fourier}},
 journal = {Stud. Math.},
 issn = {0039-3223},
 volume = {8},
 pages = {78--91},
 year = {1939},
  doi = {10.4064/sm-8-1-78-91}
}

\bib{OSW83}{article}{
   author={Oswald, P.},
   title={On some convergence properties of Haar-Fourier series in the
   classes $\varphi (L)$},
   journal={Acta Math. Hungar.},
   volume={42},
   date={1983},
   number={3-4},
   pages={279--293},
   issn={0236-5294},
   review={\MR{0722843}},
   doi={10.1007/BF01956776},
}

\bib{Ru}{book}{
   author={Rudin, Walter},
   title={Functional analysis},
   series={International Series in Pure and Applied Mathematics},
   edition={2},
   publisher={McGraw-Hill, Inc., New York},
   date={1991},
   pages={xviii+424},
   isbn={0-07-054236-8},
   review={\MR{1157815}},
}

\bib{ST}{article}{
   author={Seeger, Andreas},
   author={Trebels, Walter},
   title={Low regularity classes and entropy numbers},
   journal={Arch. Math. (Basel)},
   volume={92},
   date={2009},
   number={2},
   pages={147--157},
   issn={0003-889X},
   review={\MR{2481510}},
   doi={10.1007/s00013-008-2802-x},
}

\bib{SjSj}{article}{
   author={Sj\"{o}gren, P.},
   author={Sj\"{o}lin, P.},
   title={Littlewood-Paley decompositions and Fourier multipliers with
   singularities on certain sets},
   language={English, with French summary},
   journal={Ann. Inst. Fourier (Grenoble)},
   volume={31},
   date={1981},
   number={1},
   pages={vii, 157--175},
   issn={0373-0956},
   review={\MR{0613033}},
}

\bib{Tao}{article}{
   author={Tao, Terence},
   title={A converse extrapolation theorem for translation-invariant
   operators},
   journal={J. Funct. Anal.},
   volume={180},
   date={2001},
   number={1},
   pages={1--10},
   issn={0022-1236},
   review={\MR{1814420}},
   doi={10.1006/jfan.2000.3646},
}

\bib{TW}{article}{
   author={Tao, Terence},
   author={Wright, James},
   title={Endpoint multiplier theorems of Marcinkiewicz type},
   journal={Rev. Mat. Iberoamericana},
   volume={17},
   date={2001},
   number={3},
   pages={521--558},
   issn={0213-2230},
   review={\MR{1900894}},
   doi={10.4171/RMI/303},
}

\bib{Tri}{article}{
	author={Triebel, Hans},
    title={Approximation numbers and entropy numbers of embeddings of fractional {B}esov-{S}obolev spaces in {O}rlicz spaces},
    journal={Proc. London Math. Soc. (3)},
    volume={66},
    date={1993},
    number={3},
    pages={589--618},
    issn={0024-6115,1460-244X},
    review={\MR{1207550}},
    doi={10.1112/plms/s3-66.3.589},
}

\bib{MarcoV}{webpage}{
  author = {Vitturi, Marco},
  title  = {A Chang-Wilson-Wolff inequality using a lemma of Tao-Wright: Almost Originality blog},
  myurl  = {https://almostoriginality.wordpress.com/2019/11/21/the-chang-wilson-wolff-inequality-using-a-lemma-of-tao-wright/},
  date  = {2019}, 
}

\bib{Wilson}{book}{
   author={Wilson, Michael},
   title={Weighted Littlewood-Paley theory and exponential-square
   integrability},
   series={Lecture Notes in Mathematics},
   volume={1924},
   publisher={Springer, Berlin},
   date={2008},
   pages={xiv+224},
   isbn={978-3-540-74582-2},
   review={\MR{2359017}},
}

\bib{Zygpap}{article}{
   author={Zygmund, Antoni},
   title={On the convergence of lacunary trigonometric series},
 Journal = {Fundam. Math.},
 ISSN = {0016-2736},
 Volume = {16},
 Pages = {90--107},
 Year = {1930},
 DOI = {10.4064/fm-16-1-90-107},
}

\bib{Zyg}{book}{
   author={Zygmund, A.},
   title={Trigonometric series. Vol. I, II},
   series={Cambridge Mathematical Library},
   edition={3},
   note={With a foreword by Robert A. Fefferman},
   publisher={Cambridge University Press, Cambridge},
   date={2002},
   pages={xii; Vol. I: xiv+383 pp.; Vol. II: viii+364},
   isbn={0-521-89053-5},
   review={\MR{1963498}},
}

 \end{biblist}
 \end{bibsection}

\end{document}